\DeclareMathOperator{\mult}{mult}
\DeclareMathOperator{\expected}{expected}
\theoremstyle{plain}
  \newtheorem{lemma}[equation]{Lemma}
  \newtheorem{proposition}[equation]{Proposition}
  \newtheorem{theorem}[equation]{Theorem}
  \newtheorem{corollary}[equation]{Corollary}
\theoremstyle{definition}
  \newtheorem{definition}[equation]{Definition}
\theoremstyle{remark}
  \newtheorem{remark}[equation]{Remark}
\renewcommand{\thesection}{\arabic{section}}
\renewcommand{\theequation}{\thesection.\arabic{equation}}
 \DeclareFontFamily{U}{manual}{}
 \DeclareFontShape{U}{manual}{m}{n}{ <->  manfnt }{}
 \newcommand{\manfntsymbol}[1]{%
    {\fontencoding{U}\fontfamily{manual}\selectfont\symbol{#1}}}
\endgroup\end{trivlist}}
  \DeclareFontFamily{OT1}{pzc}{}
  \DeclareFontShape{OT1}{pzc}{m}{it}{<-> s * [1.100] pzcmi7t}{}
  \DeclareMathAlphabet{\mathpzc}{OT1}{pzc}{m}{it}
\newif\ifhascomments \hascommentstrue
  \newcommand{\david}[1]{{\color{red}[[\ensuremath{\bigstar\bigstar\bigstar} #1]]}}
  \newcommand{\matt}[1]{{\color{red}[[\ensuremath{\spadesuit\spadesuit\spadesuit} #1]]}}
  \newcommand{\austin}[1]{{\color{red}[[\ensuremath{\clubsuit\clubsuit\clubsuit} #1]]}}
  \newcommand{\david}[1]{}
  \newcommand{\matt}[1]{}
  \newcommand{\austin}[1]{}
\DeclareMathOperator{\Endo}{\ensuremath{\mathcal{E}\kern-.125em\mathpzc{nd}}}
\DeclareMathOperator{\Hom}{\ensuremath{\mathcal{H}\kern-.125em\mathpzc{om}}}
\newcommand{\PP}{\mathbb{P}}
\newcommand{\QQ}{\mathbb Q}
\newcommand{\RR}{\mathbb R}
\newcommand{\ZZ}{\mathbb{Z}}
 \def\ari[#1]{\ar@{^(->}[#1]}
 \def\are[#1]{\ar[#1]^{\txt{\'et}}}
 \def\areh[#1]{\ar[#1]|{\txt{$H$-eq}}^{\txt{\'et}}}
 \def\ars[#1]{\ar@{->>}[#1]}
 \newcommand{\dplus}{\ar@{}[d]|{\mbox{$\oplus$}}}
 \newcommand{\dtimes}{\ar@{}[d]|{\mbox{$\times$}}}
\theoremstyle{plain}
\newtheoremstyle{named}{}{}{\itshape}{}{\bfseries}{.}{.5em}{\thmnote{#3 }#1}
\theoremstyle{named}
\DeclareMathOperator{\Conv}{Conv}
\theoremstyle{remark}
\def\Q{{\mathbb Q}}
\def\x{{\mathbf x}}
\title{On curves with high multiplicity on $\mathbb{P}(a,b,c)$ for $\min(a,b,c)\leq4$}
\author{David McKinnon}
\address{University of Waterloo \\
Department of Pure Mathematics \\
Waterloo, Ontario \\
Canada  N2L 3G1}
\email{dmckinnon@uwaterloo.ca}
\author{Rindra Razafy}
\address{University of Waterloo \\
Department of Pure Mathematics \\
Waterloo, Ontario \\
Canada  N2L 3G1}
\email{rrazafy@uwaterloo.ca}
\author{Matthew Satriano}
\address{University of Waterloo \\
Department of Pure Mathematics \\
Waterloo, Ontario \\
Canada  N2L 3G1}
\email{msatrian@uwaterloo.ca}
\author{Yuxuan Sun}
\address{University of Waterloo \\
Department of Pure Mathematics \\
Waterloo, Ontario \\
Canada  N2L 3G1}
\email{y376sun@uwaterloo.ca}
\thanks{The first author and third authors were partially supported by a Discovery Grant from the National Science and Engineering Board of Canada.  The second and fourth authors were supported by an Undergraduate Student Research Award from the National Science and Engineering Board of Canada.}
\begin{document}

\begin{abstract}
On a weighted projective surface $\PP(a,b,c)$ with $\min(a,b,c)\leq 4$, we compute lower bounds for the {\em effective threshold} of an ample divisor, in other words, the highest multiplicity a section of the divisor can have at a specified point.  We expect that these bounds are close to being sharp.  This translates into finding divisor classes on the blowup of $\PP(a,b,c)$ that generate a cone contained in, and probably close to, the effective cone.  
\end{abstract}

\maketitle

\section{Introduction}
Given a projective variety $X$ and a point $Q\in X$, it is, in general, a notoriously difficult problem to calculate the pseudo-effective cone of the blow-up $\textrm{Bl}_Q(X)$ in terms of the pseudo-effective cone of $X$. Even addressing the \emph{a priori} easier question of when $\textrm{Bl}_Q(X)$ is a Mori Dream Space, where $X=\PP(a,b,c)$ is a weighted projective surface and $Q$ is the identity of its torus, is already challenging and has a rich history \cite{Hun82,Cut91,Sri91,GNW94,CT15,GK16,He17,GGK20}. To gain information about the pseudo-effective cone of $\textrm{Bl}_Q(X)$, we consider the following quantity, cf.~\cite{Fuj92}.

\begin{definition}
Let $X$ be a projective variety defined over a field $k$, $D$ a $k$-rational $\Q$-divisor, and $Q$ a $k$-rational point of $X$.  Let $\pi$ be the blowup of $X$ at $Q$ and $E$ the exceptional divisor of $\pi$. We say the \emph{effective threshold} is
\[
\gamma_Q(D) := \sup\{\gamma>0\mid \pi^*(D)-\gamma E\,\,\mbox{is pseudo-effective}\}.
\]
\end{definition}

The quantity $\gamma_Q(D)$ can be reinterpreted concretely as follows. If there is a curve in the class of $D$ with multiplicity $m$ at $Q$, then $\gamma_Q(D)\geq m$. Conversely, if $\gamma_Q(D)=m$, then for all $\epsilon>0$, the class $\pi^*D-(m-\epsilon)E$ is pseudo-effective, so $D$ contains curves of multiplicity arbitrarily close to $m$, at least in a $\Q$-divisor sense. So computing $\gamma_Q(D)$ essentially amounts to computing
\[
\sup_{C,m}\left\{\frac{1}{m}\mult_Q(C)\right\}
\]
as $m$ varies through positive integers and $C$ varies through curves in the divisor class $mD$.

In this paper, we give characteristic-free lower bounds for $\gamma_Q(D)$ in the case where $X$ is the weighted projective surface $\PP(a,b,c)$ and $\min(a,b,c)\leq4$. In fact, we do more than this: we introduce a combinatorial quantity $\gamma_{\expected}$ which is a lower bound on $\gamma$, and compute $\gamma_{\expected}$ exactly. It is worth remarking here that although the motivation for studying $\gamma_Q$ is geometric, our lower bounds on $\gamma_Q$ also have consequences for Diophantine approximation problems related to generalizations of Roth's famous 1955 theorem \cite{roth-rat-approx}, see e.g., \cite[Theorem 3.3]{MR16} and \cite[Section 8]{MS20}.

In \cite{GGK20}, the authors make a series of detailed calculations closely related to what we compute in this paper.  In particular, they search the spaces of global sections of toric surfaces of Picard rank one for curves whose strict transforms have negative self-intersection upon blowing up a point.  If there is such a curve, then the pseudo-effective cone of the blowup will be finitely generated by the exceptional divisor of the blowup and another curve of negative self-intersection.

In this paper, we compute not only curves, but also the corresponding value of the effective threshold.  We do not prove that the curves we find are always generators of the pseudo-effective cone, but in most cases the value of $\gamma$ we compute is expected to be equal or very close to the actual value.  As the authors of \cite{GGK20} also point out, our quantity $\gamma_{expected}$ is expected to be very close to the actual value of $\gamma$.

Since $X=\PP(a,b,c)$ is a toric surface, if the point $Q$ does not lie in the main torus orbit $T$, then computing $\gamma_Q$ is generally straightforward, so we may assume that $Q$ lies in $T$. Furthermore, we can choose $a$, $b$, and $c$ to be pairwise coprime, with $a\leq b\leq c$. These inequalities are always strict unless $a=b=1$, in which case 
$\gamma$ can be computed directly.  Thus, we may assume that $a<b<c$.  Finally, since $X$ has Picard rank $1$, it suffices to compute $\gamma_Q(H)$, where $H$ is the generator of the Cartier class group.

Our first result concerns the case $a<4$ and serves as a warm-up to our main result.

\begin{proposition}
\label{prop:a-leq3-case}
Let $a<b<c$ be pairwise coprime, so we may write $c=pa+qb$ with $p,q\in\ZZ$ and $0\leq q<a$. Let $Q$ be in the torus of $\PP(a,b,c)$ and $H$ be the generator of the Cartier class group. Then
\[
\gamma_Q(H)\geq
\begin{cases}
(q+1)b, & p\geq 0\\
(a-1)b, & p<0\textrm{\ and\ }a\leq3.
\end{cases}
\]
\end{proposition}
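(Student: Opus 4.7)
The plan is to construct, in each case, an explicit section of some divisor class on $\PP(a,b,c)$ with sufficient multiplicity at $Q$, then read off the bound on $\gamma_Q(H)$. Since the torus acts transitively on itself, I may assume $Q = (1,1,1)$; moreover $Q$ is a smooth point of $\PP(a,b,c)$, so multiplicity at $Q$ equals the usual order of vanishing in the chart $z = 1$ (which is \'etale-locally an affine plane around $(1,1)$). A section of $\O(d)$ of multiplicity $m$ at $Q$ gives $\gamma_Q(H) \geq m\cdot abc/d$, since the strict transform of its zero locus represents $(d/abc)\pi^* H - m E$ in the pseudo-effective cone of $\Bl_Q\PP(a,b,c)$.

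\textbf{Case $p \geq 0$.} The binomial $f = x^p y^q - z$ is a nonzero section of $\O(c)$ (both terms have weighted degree $pa + qb = c$), vanishing at $Q$. In local coordinates $u = x-1,\ v = y-1$ in the chart $z = 1$, the Taylor expansion of $f$ around $(1,1)$ begins with the linear form $pu + qv$, which is nonzero because $(p,q)=(0,0)$ would force $c=0$. Hence $\mult_Q f = 1$, and $f^{ab}$ is a section of $\O(abc)=H$ with multiplicity $ab$ at $Q$. Since $q < a$ gives $(q+1)b \leq ab$, we conclude $\gamma_Q(H) \geq ab \geq (q+1)b$.

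\textbf{Case $p < 0$ and $a \leq 3$.} Direct inspection rules out $a \leq 2$: the constraint $0 \leq q < a$ together with pairwise coprimality forces $p \geq 0$ in those cases. Thus $a = 3$, and coprimality forces $q = 2$, giving $c = 3p + 2b$ with $p \leq -1$; the hypothesis $c > b$ then yields $b > 3|p|$, hence $b + 2p \geq |p| + 1 \geq 2$. The Case 1 binomial is unavailable, so I instead aim to produce a section of $\O(ac) = \O(3c)$ of multiplicity at least $2$ at $Q$, which would give $\gamma_Q(H) \geq 2\cdot abc/(ac) = 2b = (a-1)b$. To find such a section, observe that the degree-$3c$ monomials include the four elements
\[
x^c,\quad z^3,\quad x^{c-b}y^3,\quad x^{b+2p}yz,
\]
with non-negative exponents by the inequalities above. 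The condition of multiplicity at least $2$ in the chart $z = 1$ amounts to three linear equations in the four corresponding coefficients (vanishing of the value, $\partial_x$, and $\partial_y$ at $(1,1)$), giving the system
\[
\begin{pmatrix} 1 & 1 & 1 & 1 \\ c & 0 & c-b & b+2p \\ 0 & 0 & 3 & 1 \end{pmatrix}.
\]
The first three columns form a $3\times 3$ minor with determinant $-3c \neq 0$, so the kernel is one-dimensional and yields the desired section. The main obstacle is this case: one must identify four monomials whose existence survives across all admissible $p < 0$ (where the inequalities $c > b$ and $b > 3|p|$ get used) and verify the rank condition. Case 1 is automatic once the binomial $x^p y^q - z$ is written down.
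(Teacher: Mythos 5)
Your proof is correct, but it takes a genuinely different route from the paper's. The paper never writes down explicit curves: it counts lattice points of the polytope of $aD_z\sim\frac{1}{b}H$ (via Pick's theorem) and uses the pigeonhole principle behind Definition \ref{def:nu-and-gamma-exp} --- if $h^0>\binom{d+1}{2}$ then some section vanishes to order $d$ at $Q$ --- obtaining $\nu(aD_z)\geq q+1$ when $p>0$, and $\nu(aD_z)\geq a-1$ when $p<0$, $q=a-1$, $\frac{-pa}{b}\leq 1$. Your Case 2 is really the same dimension count made explicit: your four monomials $x^c,\,z^3,\,x^{c-b}y^3,\,x^{b+2p}yz$ are four sections of $\O(3c)\sim\frac{1}{b}H$ played against the three linear conditions for multiplicity $2$, exactly matching the paper's four lattice points versus $\binom{3}{2}=3$ conditions; note that your rank computation is superfluous (three equations in four unknowns always admit a nonzero solution, and the four monomials are linearly independent), which also sidesteps the fact that the determinant $-3c$ can vanish in characteristic $3$ or characteristic dividing $c$, and that $q=1$ is excluded by $c>b$ (Lemma \ref{l:qneq1}) rather than by coprimality. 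Your Case 1, however, is genuinely different and in fact stronger: the binomial $x^py^q-z$ of weighted degree $c$ passes through the torus point $Q$, so $\gamma_Q(H)\geq \frac{abc}{c}=ab\geq (q+1)b$, which improves the paper's bound whenever $q<a-1$ (e.g.\ on $\PP(3,5,11)$ you get $15$ rather than $10$); this is the classical monomial negative-curve construction. What the explicit construction costs is flexibility: it uses that $Q$ lies in the torus, whereas the paper's counting argument only uses that $Q$ is a smooth point and is precisely the $\gamma_{\expected}$-type estimate that the rest of the paper is built around.
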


Proposition \ref{prop:a-leq3-case} yields lower bounds on $\gamma_Q$ when $a\leq3$. Moving from $a\leq3$ to $a=4$ is significantly more involved. In order to state our results, we first discuss our main technique of proof. Note that if $m\in\QQ^+$ and $mH$ is a Weil divisor such that $h^0(mH)>{\nu+1\choose 2}$, then there is a global section $g$ of $mH$ that vanishes at $Q$ to order $\nu$. Writing $m=\frac{m_1}{m_2}$ with $m_1,m_2\in\ZZ^+$, we see $g^{m_2}\in H^0(X,m_1H)$ vanishes to order $\nu m_2$. By definition, it follows that $\gamma_Q(H)\geq\frac{\nu m_2}{m_1}=\frac{\nu}{m}$. This motivates the following definition.

\begin{definition}
\label{def:nu-and-gamma-exp}
For any Weil divisor $D$, let
\[
\nu(D) := \max\left\{ d \in \mathbb{Z}^+ \mid h^0(D) > \binom{d + 1}{2} \right\}.
\]
If $H$ denotes the generator of the Cartier class group of $\PP(a,b,c)$ with $a<b<c$, then let
\[
\gamma_{\expected}(H) := \sup\left\{\frac{\nu(mH)}{m}\mid m\in\frac{1}{b}\ZZ^+\cup\frac{1}{c}\ZZ^+\right\}.
\]
\end{definition}

We can now state our main result. Recall that Proposition \ref{prop:a-leq3-case} already yields lower bounds on $\gamma_Q$ when $p\geq0$ in general, so we turn to the case $p<0$.

\begin{theorem}
\label{c:P4bc-conjs}
With notation and hypotheses as in Proposition \ref{prop:a-leq3-case}, assume $a=4$ and $p<0$. Then
\[
\gamma_Q(H)\geq\gamma_{\expected}(H) = \frac{\nu(D_0)}{m_0},
\] 
where $D_0$ and $\nu(D_0)$ are computed exactly as follows. Given our constraints, we have $2<\frac{b}{-p}<\frac{16}{3}$.  Divide the interval $[2,\frac{16}{3}]$ into a countably infinite sequence of intervals of the form
\[
I_k:=\left[\frac{16(k+1)^2}{8(k+1)^2-4(k+1)-1},\frac{16k^2}{8k^2-4k-1}\right]
\]
with $k\in\ZZ^+$. Then the class of $D_0\sim m_0 H$ is given as follows, depending on the value of $\frac{b}{-p} \in I_k$: 
\begin{enumerate}
\item If $\frac{b}{-p} \in I'_{k,-} := \left[\frac{16(k+1)^2}{8(k+1)^2-4(k+1)-1},\frac{2k+1}{k}\right]$, then $D_0 \sim \frac{2k+3}{c}H$ with $\nu(D_0)=4(k+1)$.
\item If $\frac{b}{-p} \in I'_{k,+} := \left[\frac{2k+1}{k},\frac{4(2k+1)^2}{8k^2+4k-1}\right]$, then $D_0 \sim \frac{2k+1}{b}H$ with $\nu(D_0)=4(k+1)$.
\item If $\frac{b}{-p} \in I''_{k,-} := \left[\frac{4(2k+1)^2}{8k^2+4k-1}, \frac{4k}{2k-1}\right]$, then $D_0 \sim \frac{k+1}{c}H$ with $\nu(D_0)=2k+1$.
\item If $\frac{b}{-p} \in I''_{k,+} := \left[\frac{4k}{2k-1},\frac{16k^2}{8k^2-4k-1}\right]$, then $D_0 \sim \frac{k}{b}H$ with $\nu(D_0)=2k+1$.
\end{enumerate}
%
\end{theorem}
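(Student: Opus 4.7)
The inequality $\gamma_Q(H)\geq \gamma_{\expected}(H)$ is the observation recorded immediately before Definition~\ref{def:nu-and-gamma-exp}: any $m$ with $h^0(mH)>\binom{\nu+1}{2}$ yields a section of $mH$ vanishing to order $\geq\nu$ at $Q$, whence $\gamma_Q(H)\geq \nu/m$. The content of the theorem is thus the exact computation of $\gamma_{\expected}(H)$, which reduces to two finite-dimensional subtasks: computing $h^0(mH)$ explicitly for $m\in\tfrac{1}{b}\ZZ^+\cup\tfrac{1}{c}\ZZ^+$, and then optimising $\nu(mH)/m$ over all such $m$.

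For the count, since $a=4$ and $a,b,c$ are pairwise coprime, $H=\O(abc)$ and each $mH$ equals $\O(N)$ with $N\in\{4nc,\,4nb\}$, so
\[
h^0(mH)=\#\{(i,j,k)\in\NN^3 : 4i+bj+ck=N\}.
\]
Eliminating $i$ reduces this to counting lattice points $(j,k)\in\NN^2$ in the triangle $bj+ck\leq N$ subject to $bj+ck\equiv N\pmod 4$. Stratifying by residue mod~$4$ (using that $b,c$ are odd and $q\in\{1,3\}$, since $c$ odd forces $q\neq 0,2$) yields a closed-form expression for $h^0$ whose main term is $N^2/(8bc)$ and whose correction is governed only by $N,b,c$ mod~$4$. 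Plugging this into each of the four candidate classes $D_0$, I would verify the two inequalities $\binom{\nu(D_0)+1}{2}<h^0(D_0)\leq\binom{\nu(D_0)+2}{2}$, confirming the stated value of $\nu(D_0)$ precisely when $b/(-p)$ lies in the corresponding sub-interval $I'_{k,\pm}$ or $I''_{k,\pm}$.

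For the optimisation, I would write out the four candidate ratios
\[
R_1=\frac{4(k+1)c}{2k+3},\quad R_2=\frac{4(k+1)b}{2k+1},\quad R_3=\frac{(2k+1)c}{k+1},\quad R_4=\frac{(2k+1)b}{k},
\]
and carry out pairwise comparisons. Substituting $c=qb-4(-p)$, each equation $R_i=R_j$ becomes linear in $b/(-p)$ and its solution matches one of the displayed endpoints $\tfrac{2k+1}{k},\,\tfrac{4(2k+1)^2}{8k^2+4k-1},\,\tfrac{4k}{2k-1}$. This identifies the unique dominant candidate on each sub-interval, and at the extreme endpoints of $I_k$ it also explains why the index $k$ must jump to $k\pm 1$. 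To complete the proof, I must rule out every other choice of $n$ (i.e.\ not $k,k+1,2k+1$, or $2k+3$); the asymptotic $\nu(nH/t)/m\to 2\sqrt{bc}$, together with the effective error term from Step~1, bounds $\nu/m$ above by a quantity converging to $2\sqrt{bc}$ from below, reducing the problem to a finite check near the candidate values.

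The main obstacle is this final ``ruling out'' step: the four candidate ratios themselves converge to $2\sqrt{bc}$ as $k\to\infty$, so the comparison with neighbouring values of $n$ must be done with error terms tight enough to see the difference, and the appropriate residue mod~$4$ has to be tracked in each case. Maintaining correct mod-$4$ bookkeeping throughout the lattice-point count, so that the inequalities on $h^0(D_0)$ come out exactly at the stated interval endpoints, is the technical core of the argument.
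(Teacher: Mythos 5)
Your first two steps (the trivial bound $\gamma_Q(H)\geq\gamma_{\expected}(H)$, an exact lattice-point count for $h^0(mH)$, and the pairwise comparison of the four candidate ratios to locate the endpoints) are sound and essentially parallel the paper: your monomial count with mod-$4$ bookkeeping is equivalent to the Ehrhart quasi-polynomial of Proposition \ref{prop:Ehrhart}, and the lower bound $\nu(D_0)\geq\nu_0$ is the content of Proposition \ref{prop:nu-of-winning-poly} (note, by the way, that Lemma \ref{l:qneq1} forces $q=3$, not $q\in\{1,3\}$). The genuine gap is in your final ``ruling out'' step, in two ways. First, the direction of your asymptotic claim is wrong: since the linear Ehrhart coefficient $c_1$ is positive, the real-root bound coming from $\binom{\nu+1}{2}<h^0(n\delta D_x)$ gives $\nu(n\delta D_x)/n\lesssim \sqrt{2c_2}+O(1/n)$ with a \emph{positive} $1/n$-correction, i.e.\ the natural upper envelope approaches $2\sqrt{bc}$ from \emph{above}, not below. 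So large and intermediate $n$ are not dismissed by convergence; one must exploit the integrality of $\nu$ (the ceiling threshold in inequality (\ref{eqn:main-ineq-bD1})) together with sharp, $n$-dependent bounds on the oscillating constant term $c_0$ (the fractional-part sums), and the places where this is delicate are precisely ``near the candidate values'' — the regimes $t\leq k/16$, $n<\sqrt{k}$, $n=k+1$ that Theorem \ref{thm:n0-doesnt-divide-n} has to treat with the refined bounds $\epsilon_i^{++}$ rather than the crude ones. Your ``effective error term from Step 1'' is exactly this periodic constant term, and bounding it is the technical core of the paper (the reduction algorithm of Proposition \ref{prop:specific-cor-last-two-terms-bd-iter}, Corollary \ref{cor:last-two-terms-bd-iter}, Lemma \ref{l:bounds-on-epp-and-eppp}, and Table \ref{table:proof-steps}); the proposal gives no substitute for it.

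Second, even granting a correct envelope argument, ``reducing to a finite check'' only works for a \emph{fixed} surface $\PP(4,b,c)$, whereas the theorem quantifies over infinitely many $(b,c)$ with $\frac{b}{-p}$ in each interval, and the margins in the comparison shrink as $\frac{b}{-p}$ approaches the interval endpoints. You need a uniformity mechanism: the paper supplies it with the monotonicity statement Lemma \ref{l:red-to-bds-seqs} and Remark \ref{rmk:red-to-bds-seqs}, which reduce the whole interval to two explicit sequences of surfaces satisfying $\alpha_1 b-\beta_1(-p)=\pm1$ with $b\to\infty$, after which the inequalities (\ref{eqn:main-ineq-bD1}) and (\ref{eqn:main-ineq-bD1-v2}) are verified asymptotically in $b$ by writing $n=n_0t+u$ (Theorem \ref{thm:n0-doesnt-divide-n} and Proposition \ref{prop:n0-does-divide-n}). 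Without an analogue of this reduction, and without correcting the direction of the asymptotic comparison, the proposal does not yield the equality $\gamma_{\expected}(H)=\nu(D_0)/m_0$; it only establishes the lower bound and correctly predicts where the phase transitions should occur.
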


\begin{remark}
It is straightforward to check from Theorem \ref{c:P4bc-conjs} that $\gamma_{\expected}(D_0)$ is a continuous function of $\frac{b}{-p}$.
\end{remark}

\begin{remark}
The quantity $\gamma_{\expected}(H)$ is the lower bound for $\gamma(H)$ obtained by simple linear algebra: the vanishing to order $n$ of a section of $H$ is equivalent to the vanishing of ${n+1\choose 2}$ linear forms on the space of sections of $H$.  We therefore have $\gamma(H)\geq\gamma_{\expected}(H)$ trivially.

However, one also expects that the two quantities are not so different.  In particular, if $\gamma(H)>\gamma_{\expected}(H)$ at some point $Q$ in the main torus orbit, then there is a section $s$ of some multiple $mH$ of $H$ that has an order of vanishing that is greater than $\nu(mH)$ at $Q$.  For any element $\sigma$ of the torus, the section $\sigma(s)$ has unusually high order of vanishing at $\sigma(Q)$, so for {\em every} point of the main orbit, there is a section of $mH$ that has unusually high order of vanishing there.  This is unlikely -- though not downright impossible -- and so one expects the two quantities to be close.

\end{remark}

The rest of the paper is organized as follows.  Section \ref{sec:prelim-reds} proves Proposition \ref{prop:a-leq3-case} and describes some preliminary reductions for Theorem~\ref{c:P4bc-conjs}. Section \ref{sec:Ehrhart} computes the main terms in the count of global sections of multiples of $H$.  Section \ref{sec:last2terms} then begins the process of bounding the error terms, and Section \ref{sec:proof-of-thm} finishes the proof of Theorem~\ref{c:P4bc-conjs}.

\section*{Acknowledgments}
We are grateful to Kalle Karu for many enlightening email exchanges. This paper is the outcome of an NSERC-USRA project; we thank NSERC for their support.

\section{Proof of Proposition \ref{prop:a-leq3-case} and preliminary reductions}
\label{sec:prelim-reds}

Throughout this paper, we let $x$, $y$, and $z$ be the weighted projective coordinates on $\mathbb{P}(a,b,c)$ weights $a$, $b$, and $c$, respectively. We let $D_x$, $D_y$, and $D_z$ denote the Weil divisors defined by the vanishing of $x$, $y$, and $z$, respectively. We let $H$ denote the generator of the Cartier class group, so we have $H\sim bcD_x\sim acD_y\sim abD_z$. Given any Weil divisor $D$, we let $P_D\subset\RR^2$ be the associated polytope with the property that $h^0(D)=|P_D\cap\ZZ^2|$. We sometimes abusively note $|P_D\cap\ZZ^2|$ by $|P_D|$.

After a preliminary lemma, we prove Proposition \ref{prop:a-leq3-case-more-general} which is a slightly more general version of Proposition \ref{prop:a-leq3-case}.

\begin{lemma}
\label{l:qneq1}
With notation and hypotheses as in Proposition \ref{prop:a-leq3-case}, if $p<0$, then $q\neq1$. In particular, if $p<0$ and $a\leq3$, then $a=3$ and $q=2$.
\end{lemma}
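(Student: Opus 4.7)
The proof should be essentially a short direct computation, so I will plan it as follows.

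The plan is to rule out small values of $q$ by substituting them into $c=pa+qb$ and deriving a contradiction with the standing assumption $a<b<c$. First, I would handle $q=0$: the equation becomes $c=pa$, which forces $c<0$ since $p<0$ and $a>0$, contradicting $c>0$. (Alternatively, one could invoke pairwise coprimality to rule out $a\mid c$, but the sign argument is cleaner.) Next, I would handle the key case $q=1$: the equation becomes $c=pa+b$, so $c-b=pa<0$, which contradicts $b<c$. This establishes $q\neq 1$, which is the main content of the lemma.

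For the second assertion, combining the above with the standing constraint $0\le q<a$ gives $2\le q<a$, so $a\ge 3$. Adding the hypothesis $a\le 3$ forces $a=3$, and then the only allowed value in $\{2,\dots,a-1\}=\{2\}$ is $q=2$.

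I do not anticipate any real obstacle here; the argument is a one-line sign check against $b<c$. The only thing to be careful about is confirming that $q$ is a well-defined integer in $[0,a-1]$ under the setup (which is built into the statement of Proposition \ref{prop:a-leq3-case}) so that the case analysis $q\in\{0,1\}$ genuinely exhausts the possibilities obstructing $a=3,q=2$ when $a\le 3$.
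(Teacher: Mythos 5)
Your proposal is correct and follows essentially the same argument as the paper: for $q=1$ the relation $c=pa+b$ with $pa<0$ contradicts $b<c$, for $q=0$ the relation $c=pa$ contradicts the signs of $p$ and $c$, and then $0\le q<a$ together with $a\le 3$ forces $a=3$, $q=2$. No issues.
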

\begin{proof}
If $q=1$, then $c=pa+b<b$ which is a contradiction. If $p<0$ and $a\leq3$, then $q=0$ or $q\geq2$. The former case cannot occur as it implies $c=pa$ and hence $p>0$. The latter case implies $2\leq q\leq a-1$ so $q=2$ and $a=3$.
\end{proof}

\begin{proposition}
\label{prop:a-leq3-case-more-general}
With notation and hypotheses as in Proposition \ref{prop:a-leq3-case}, we have
\[
\gamma_Q(H)\geq
\begin{cases}
(q+1)b, & p\geq 0\\
(a-1)b, & p<0,\ q=a-1, \textrm{\ and\ } \frac{-pa}{b}\leq1.
\end{cases}
\]
Furthermore, if $a\leq3$ and $p<0$, then $q=a-1$ and $\frac{-pa}{b}\leq1$ automatically hold.
\end{proposition}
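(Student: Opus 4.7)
The plan is to exhibit explicit sections of small weight whose multiplicities at $Q$ give the stated lower bounds for $\gamma_Q(H)$ in the two main cases, then dispatch the ``furthermore'' clause using Lemma~\ref{l:qneq1} and the inequality $b<c$.

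For $p\geq 0$: after using the torus action to place $Q = [1:1:1]$, I consider
\[
f_1 = z - x^p y^q \in H^0(\mathcal{O}(c)),
\]
which is well-defined because $p,q\geq 0$. Writing $f_1 = x^p y^q(\chi - 1)$ for the weight-$0$ character $\chi = z/(x^p y^q)$, which is nontrivial in the character lattice $M = \ZZ^3/\ZZ(a,b,c)$ (else $(-p,-q,1)\in\ZZ\cdot(a,b,c)$ would force $c=1$, contradicting $c>b\geq 1$), we have $\mult_Q f_1 = 1$. Since $\mathcal{O}(c) \sim \frac{1}{ab}H$ in $\CaCl_{\QQ}$, this yields $\gamma_Q(H) \geq ab$, and the bound $ab\geq(q+1)b$ is immediate from $q<a$.

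For $p<0$ with $q=a-1$ and $-pa\leq b$: I aim for a section of $\mathcal{O}(acD_x) \sim \frac{1}{b}H$ vanishing to order $\geq a-1$ at $Q$, which will give $\gamma_Q(H) \geq (a-1)b$. The hypotheses force $(a-2)b\leq c<(a-1)b$, and by enumerating lattice points of $ai+bj+ck = ac$ with $i,j,k\geq 0$, split by $k\in\{0,1,\dots,a\}$ and tracking the residue of $j$ modulo $a$, I would verify
\[
h^0(\mathcal{O}(ac)) = \binom{a}{2}+1.
\]
Since the condition that a section vanish to order $\geq a-1$ at a smooth torus point imposes $\binom{a}{2}$ linear equations on its coefficients, the strict inequality $\binom{a}{2}+1>\binom{a}{2}$ produces a nonzero section $f$ with $\mult_Q f \geq a-1$ by pure linear algebra. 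For the case $a=3$ (the only one relevant to the ``furthermore'' clause) this $f$ is, up to scaling, $3x^{2p+b}yz - x^c - x^{c-b}y^{a} - z^a$; its vanishing to order $2$ at $Q$ is exactly the three-variable AM-GM inequality becoming equality at the symmetric point.

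For the ``furthermore'' clause when $a\leq 3$ and $p<0$: Lemma~\ref{l:qneq1} forces $a=3$ and $q=2=a-1$, and the inequality $b<c=3p+2b$ rearranges to $-3p<b$, so $-pa/b<1$, as required.

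The main obstacle is the precise lattice-point count in the second case: one must show that each $k\in\{1,\dots,a-2\}$ contributes exactly $a-1-k$ solutions while $k=a-1$ contributes none, yielding $(a-1)+\sum_{k=1}^{a-2}(a-1-k)+1 = \binom{a}{2}+1$. This depends on the narrow range $p/b\in[-1/a,0)$ being just tight enough to exclude the ``extra'' lattice point at each level $k$. Once this count is in hand, the rest of the argument is formal linear algebra.
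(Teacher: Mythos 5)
Your argument is correct, and in the $p\geq 0$ case it takes a genuinely different (and sharper) route than the paper. There the paper counts lattice points of the polytope of $aD_z\sim\frac{1}{b}H$ (containment of a lattice triangle plus Pick's theorem) to get $\nu(aD_z)\geq q+1$ and hence $\gamma_Q(H)\geq (q+1)b$, whereas your single binomial section $z-x^py^q$ of $\mathcal{O}(c)=\mathcal{O}(D_z)\sim\frac{1}{ab}H$ through $Q$ immediately gives $\gamma_Q(H)\geq ab\geq(q+1)b$; this is simpler and strictly stronger whenever $q<a-1$ (the exact equality $\mult_Q f_1=1$ you assert could fail in small characteristic, but only $\mult_Q f_1\geq 1$ is needed, so this is harmless). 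In the $p<0$ case your skeleton is the same as the paper's: produce a section of the degree-$ac$ system $cD_x\sim aD_z\sim\frac{1}{b}H$ (your ``$\mathcal{O}(acD_x)$'' is a slip, since $acD_x\sim\frac{a}{b}H$; you mean total weighted degree $ac$) vanishing to order $a-1$ at $Q$ by comparing $h^0$ with $\binom{a}{2}$. The only substantive difference is that you insist on the exact count $h^0=\binom{a}{2}+1$ and explicitly defer its verification; that count is in fact correct and the deferred step is routine: since $c\equiv-b\pmod a$, a monomial $x^iy^jz^k$ of degree $ac$ forces $j\equiv k\pmod a$, and the number of admissible $j$ at level $k$ is $\lfloor (a-k-1)+\tfrac{p(a-k)}{b}\rfloor+1$, which equals $a-1-k$ for $0\leq k\leq a-2$ and $0$ for $k=a-1$ precisely because $0<-pa\leq b$, with the single extra monomial $z^a$ at $k=a$; your explicit order-two section for $a=3$ also checks out. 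Note, however, that only the lower bound $h^0\geq\binom{a}{2}+1$ is needed, which is exactly what the paper extracts more softly from a triangle contained in $P_{aD_z}$ together with Pick's theorem, so you could shorten your argument by replacing the exact enumeration with that containment. Your treatment of the ``furthermore'' clause coincides with the paper's (Lemma~\ref{l:qneq1} plus $b<c=pa+2b$).
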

\begin{proof}
Note that since $a$, $b$, and $c$ are pairwise coprime, $p\neq0$. First suppose $p>0$. Then the polytope $P_{aD_z}$ is the convex hull of $0$, $(q,-a)$, and $(\frac{-pa}{b},-a)$, so it contains the triangle $T$ with vertices $0$, $(q,-a)$, $(0,-a)$. By Pick's Theorem, $1 + {q+2 \choose 2}\leq|T\cap\ZZ^2|\leq|P_{aD_z}\cap\ZZ^2|$, which implies $\nu(aD_z)\geq q+1$. Since $aD_z\sim\frac{1}{b}H$, we find $\gamma_Q(H)\geq (q+1)b$.

Next suppose $p<0$, $q=a-1$, and $\frac{-pa}{b}\leq1$. Notice that the polytope $P_{aD_z}$ is given by the vertices as above, and it contains the triangle $T$ with vertices $0$, $(q,-a)$, and $(1,-a)$ by $p < 0$ and $\frac{-pa}{b}\leq1$. From $q=a-1$ and Pick's Theorem, we have $1 + {a \choose 2}=|T\cap\ZZ^2|$, which, as in the previous paragraph, implies $\gamma_Q(H)\geq (a-1)b$.

Finally, we note that if $a\leq3$ and $p<0$, then Lemma \ref{l:qneq1} tells us $a=3$ and $q=2=a-1$. Then $b<c=pa+2b$ implies $\frac{-pa}{b}<1$.
\end{proof}

The rest of the paper is concerned with the proof of Theorem \ref{c:P4bc-conjs}. By Lemma \ref{l:qneq1}, since $p<0$ and $a,b,c$ are pairwise coprime, we must have
\[
q=3=a-1.
\]
We begin by analyzing $\nu(D_0)$.

\begin{proposition}
\label{prop:nu-of-winning-poly}
With notation and hypotheses as in Theorem \ref{c:P4bc-conjs}, if $\frac{b}{-p}\in I'_k := I'_{k,+} \cup I'_{k,-}$, resp.~$I''_k := I''_{k,+} \cup I''_{k,-}$, and $D_0$ is as in the conclusion of the theorem, then $\nu(D_0)\geq 4(k+1)$, resp.~$2k+1$.
\end{proposition}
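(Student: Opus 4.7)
The plan is to handle the four cases uniformly in two steps. First, I identify a specific endpoint of each interval at which the third vertex of the polytope $P_{D_0}$ becomes a lattice point; there, $P_{D_0}$ is a lattice triangle and Pick's theorem gives an exact lattice-point count. Second, I use a monotonicity argument to transfer the resulting bound to the rest of the interval.

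In Cases (2) and (4), where $D_0 \sim 4nD_z$ with $n=2k+1$ or $n=k$, scaling the formula from the proof of Proposition \ref{prop:a-leq3-case-more-general} gives $P_{D_0}$ as the triangle with vertices $(0,0),\ (3n,-4n),\ (-4pn/b,-4n)$. In Cases (1) and (3), where $D_0 \sim 4nD_y$ with $n=2k+3$ or $n=k+1$, an analogous derivation applies; here I would write $b=4p^*+q^*c$ with $0\leq q^*<4$ and use parity (since $b,c$ are odd, $q^*$ is odd) together with $4$-divisibility of $b-q^*c$ to rule out $q^*=1$ and force $q^*=3$, $p^*=-2b-3p$. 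This gives $P_{D_0}$ as the triangle with vertices $(0,0),\ (3n,-4n),\ (4n(2b+3p)/c,-4n)$. At $b/(-p)=(2k+1)/k$, the third vertex in Cases (1) and (2) specializes to the lattice point $(4(k+2),-4(2k+3))$ and $(4k,-4(2k+1))$, respectively; at $b/(-p)=4k/(2k-1)$, it specializes in Cases (3) and (4) to $(2k+3,-4(k+1))$ and $(2k-1,-4k)$. At each such endpoint, Pick's theorem applies, and using $\gcd(3,4)=1$ together with the oddness of $2k\pm 1$ and $2k+3$ on the edges from the origin gives $h^0(D_0) = 8k^2+18k+11 = \binom{4k+5}{2}+1$ in Cases (1) and (2), and $h^0(D_0) = 2k^2+3k+2 = \binom{2k+2}{2}+1$ in Cases (3) and (4). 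In either case $h^0(D_0) > \binom{\nu+1}{2}$, giving $\nu(D_0) \geq \nu$ at the lattice endpoint.

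For the remainder of each interval, I would argue by monotonicity. Treating the $x$-coordinate of the third vertex as a function of $r:=-p/b$, one finds it is linear in $r$ in the $D_z$ cases and proportional to $(2-3r)/(3-4r)$ in the $D_y$ cases, where a direct differentiation yields derivative $-1/(3-4r)^2 < 0$; in each of the four cases, moving $b/(-p)$ from the lattice endpoint into the interior of its interval thus sends the third vertex strictly to the left, while the other two vertices $(0,0)$ and $(3n,-4n)$ remain fixed. A quick check of the defining half-plane $4nx + v_3y \geq 0$ shows that the enlarged polytope strictly contains the lattice triangle at the endpoint, so its lattice-point count remains at least $\binom{\nu+1}{2}+1$, and $\nu(D_0)\geq\nu$ persists on the entire interval. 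The main obstacle is the correct setup of $P_{4nD_y}$, which rests on the parity/divisibility argument pinning down $q^*=3$; the Pick's-theorem bookkeeping and the monotonicity step are otherwise routine.
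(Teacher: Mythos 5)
Your proposal is correct and is essentially the paper's argument: the paper also fixes the lattice triangle arising at the shared endpoint ($P'=\Conv((0,0),(-(2k+3),0),(-3(2k+1),4(2k+1)))$, resp.\ $P''$), applies Pick's theorem to get exactly $\binom{4(k+1)+1}{2}+1$, resp.\ $\binom{2k+2}{2}+1$, lattice points, and shows the polytope of $D_0$ contains this triangle throughout $I'_k$, resp.\ $I''_k$. Your endpoint triangles in the $4nD_y$/$4nD_z$ normalizations (with the $q^*=3$ computation) are unimodular images of the paper's $P'$, $P''$, and your monotonicity/half-plane step plays exactly the role of the paper's containment inequalities, so the two proofs coincide up to a change of lattice coordinates.
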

\begin{proof}
Let $m_0\in\ZZ$ be such that $D_0\sim\frac{m_0}{b}H\sim m_0cD_x$ or $D_0\sim\frac{m_0}{c}H\sim m_0bD_x$. In the former (respectively latter) case, $h^0(D_0)$ is given by the number of integer lattice points lying in the polytope
\[
P_{m_0cD_x} =\Conv\left((0,0),\,\left(-m_0\frac{c}{b},0\right),\,(-3m_0,4m_0)\right)
\]
respectively
\[
P_{m_0bD_x} = \Conv\left((0,0),\,(-m_0,0),\,\left(-3m_0\frac{b}{c},4m_0\frac{b}{c}\right)\right).
\]

First consider the case $\frac{b}{-p} \in I'_k$. As in Theorem \ref{c:P4bc-conjs}, $I'_{k,+}:=[\frac{2k+1}{k},\frac{4(2k+1)^2}{8k^2+4k-1}]$ and $I'_{k,-}:=[\frac{16(k+1)^2}{8(k+1)^2-4(k+1)-1},\frac{2k+1}{k}]$. 
If $\frac{b}{-p}\in I'_{k,+}$, then $D_0\sim m_0cD_x$ with $m_0=2k+1$ and if $\frac{b}{-p}\in I'_{k,-}$, then $D_0\sim m_0bD_x$ with $m_0=2k+3$. Let
\[
P' = \Conv((0,0),\,(-(2k+3),0),\,(-3(2k+1),4(2k+1))).
\]
We then have $P'\subset P$. Indeed, if $\frac{b}{-p}\in I'_{k,+}$, then the inclusion follows from $-(2k+1)\frac{c}{b} = -(2k+1)(4\frac{p}{b}+3) \leq -(2k+1)(4\frac{-k}{2k+1}+3) = -(2k+3)$. If $\frac{b}{-p}\in I'_{k,-}$, then the inclusion follows from $-3(2k+3)\frac{b}{c} \leq -3(2k+1)$ and $4(2k+3)\frac{b}{c} \geq 4(2k+1)$. So, in either case, we have
\[
|P'\cap\ZZ^2|\leq h^0(D_0).
\]
Note that the area of $P'$ is given by 
\[
A(P') = \frac{1}{2}(4(2k+1)(2k+3)) = 2(2k+1)(2k+3)
\]
and the number of lattice points on its boundary is given by 
\[
B(P') = (2k+3) + (2k+1) + 4 = 4k + 8.
\] 
Since $P'$ is a lattice polygon, applying Pick's Theorem, we have
\[
|P'\cap\ZZ^2| = A(P') + \frac{1}{2}B(P') + 1 = 8k^2 + 18k + 11 = \binom{4(k+1)+1}{2}+1,
\]
which shows $\nu(D_0) \geq \nu(P') = 4(k+1)$.

For $\frac{b}{-p}\in I''_k$, the same proof works when use
\[
P'' = \Conv((0,0),\,(-(k+1),0),\ (-3k,4k))
\]
in place of $P'$.
\end{proof}

Proposition \ref{prop:nu-of-winning-poly} therefore gives the lower bound
\[
\gamma_{\expected}(H) \geq \frac{\nu(D_0)}{n},
\]
where $D_0=nH$ is the divisor class described in Theorem~\ref{c:P4bc-conjs}. To obtain upper bounds, we introduce the following quantities and make use of the subsequent lemma. Let
\begin{align*}
\gamma_{\expected,b}(H) &:= \sup\left\{\frac{c}{m}\nu(D)\mid D=mbD_x\sim\frac{m}{c}H\right\}\\
\gamma_{\expected,c}(H) &:= \sup\left\{\frac{b}{m}\nu(D)\mid D=mcD_x\sim\frac{m}{b}H\right\}.
\end{align*}

\begin{lemma}
\label{l:red-to-bds-seqs}
Suppose $\mathbb{P}(4,b_0,c_0)$, $\mathbb{P}(4,b_L,c_L)$, and $\mathbb{P}(4,b_U,c_U)$ satisfy the hypotheses of Theorem \ref{c:P4bc-conjs}, except we need not assume $p<0$. Suppose $\frac{b_L}{-p_L} < \frac{b_0}{-p_0} < \frac{b_U}{-p_U}$ and let $H_0$, $H_L$, and $H_U$ denote the generators of the respective Cartier class groups. Then
\[
\gamma_{\expected,b}(H_0) \leq \gamma_{\expected,b}(H_L)\quad\quad\textrm{and}\quad\quad
\gamma_{\expected,c}(H_0) \leq \gamma_{\expected,c}(H_U).
\]
\end{lemma}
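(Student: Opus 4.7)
I would prove both inequalities by a polygon-comparison argument that transfers lattice-point counts across weighted projective surfaces. First I would translate the hypothesis geometrically: using $c = 4p + 3b$, we have $b/(-p) = 4/(3 - c/b)$, a strictly monotone function of $c/b$ on each sign-regime of $p$. In the principal regime $p<0$, the chain $b_L/(-p_L) < b_0/(-p_0) < b_U/(-p_U)$ thus yields $b_L/c_L > b_0/c_0 > b_U/c_U$; the $p>0$ and mixed-sign cases are handled analogously.

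For the first inequality $\gamma_{\expected,b}(H_0) \leq \gamma_{\expected,b}(H_L)$, I would compare, for each $m \in \ZZ^+$, the polygons $P_L(m) := P_{mb_L D_x}$ and $P_0(m) := P_{mb_0 D_x}$ given by $\Conv((0,0), (-m, 0), (-3mb/c, 4mb/c))$ on the respective surfaces. Both share the base from $(0,0)$ to $(-m, 0)$, and their apices lie on the common ray through the origin of slope $-4/3$; the apex of $P_L(m)$ lies strictly farther out (since $b_L/c_L > b_0/c_0$), so $P_L(m) \supseteq P_0(m)$ and hence $\nu(m b_L D_x) \geq \nu(m b_0 D_x)$. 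Given $m_0 \in \ZZ^+$ nearly realizing the supremum on the left-hand side, the next step is to construct $m_L \in \ZZ^+$ for which $(c_L/m_L)\nu(m_L b_L D_x) \geq (c_0/m_0)\nu(m_0 b_0 D_x)$; the natural candidates are $m_L = m_0$ (good when $c_L \geq c_0$, since then containment directly gives the full bound) and $m_L = \lfloor m_0 c_L/c_0 \rfloor$ (which uses scaling to balance the $c/m$ prefactor against any small loss in polygon containment). The second inequality $\gamma_{\expected,c}(H_0) \leq \gamma_{\expected,c}(H_U)$ is the mirror image, using the polygons $P_{mcD_x} = \Conv((0,0), (-mc/b, 0), (-3m, 4m))$ whose apex is now fixed and whose base vertex $(-mc/b, 0)$ is controlled by $c/b$; the containment $P_U(m) \supseteq P_0(m)$ follows from $c_U/b_U > c_0/b_0$.

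The hard part will be the supremum-transfer step. Polygon containment alone controls $\nu$ but not the $c/m$ prefactor, and the apex of $P_{mbD_x}$ is generically not a lattice point so Pick's theorem does not apply verbatim. I would handle this by rescaling to the integer polygon $c \cdot P_{mbD_x} = \Conv((0,0), (-mc, 0), (-3mb, 4mb))$ (all of whose vertices are lattice points), applying Ehrhart quasi-polynomial bounds together with an explicit count of boundary lattice points, and then transferring the resulting lattice-point estimates back to the integer parameter $m$ with the correct $c/m$ normalization to conclude the desired comparison of $\nu(m b D_x)/m$ across the two surfaces.
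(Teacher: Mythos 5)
Your first two paragraphs are exactly the paper's proof. The paper rewrites $\frac{b}{c}=\frac{1}{4\frac{p}{b}+3}$ so that the ordering of the ratios $\frac{b}{-p}$ becomes $\frac{b_U}{c_U}<\frac{b_0}{c_0}<\frac{b_L}{c_L}$, notes the inclusion of unit polytopes
\[
\Conv\left((0,0),(-1,0),\left(-3\tfrac{b_0}{c_0},4\tfrac{b_0}{c_0}\right)\right)\subset\Conv\left((0,0),(-1,0),\left(-3\tfrac{b_L}{c_L},4\tfrac{b_L}{c_L}\right)\right)
\]
(and the analogous inclusion for the $cD_x$ polytopes, using $\frac{c_0}{b_0}<\frac{c_U}{b_U}$), and concludes $\nu(mb_0D_x)\le\nu(mb_LD_x)$ for every $m$; that is the entire argument. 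In particular your worry about Pick's theorem is moot: nothing beyond monotonicity of lattice-point counts under inclusion is used, and no Ehrhart or boundary-point estimates are needed.

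The step you single out as ``the hard part'' --- choosing $m_L$ to reconcile the prefactors $c_0$ versus $c_L$ --- is where the plan would fail, and it is worth seeing why. The quantity $\sup_m\frac{c}{m}\nu(mbD_x)$ grows like $2\sqrt{bc}$ (the shape of the unit polytope only pins down $b/c$), while the hypothesis $\frac{b_L}{-p_L}<\frac{b_0}{-p_0}$ puts no constraint whatsoever on the relative sizes of $b_0c_0$ and $b_Lc_L$: one may take $(b_L,c_L)=(5,7)$ and $(b_0,c_0)$ enormous (e.g.\ $(1003,1605)$) with slightly larger ratio, and then no choice of $m_L$ can give $\frac{c_L}{m_L}\nu(m_Lb_LD_x)\ge\frac{c_0}{m_0}\nu(m_0b_0D_x)$; in particular the fallback $m_L=\lfloor m_0c_L/c_0\rfloor$ does not help, since the rescaled polytope $m_LP_L$ then has base shorter by the factor $c_L/c_0$ and apex height $\approx 4m_0b_L/c_0$, which need not exceed $4m_0b_0/c_0$ when $b_L<b_0$. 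So the prefactored inequality is not a consequence of polytope containment, and it is not what the argument requires: the lemma is proved, and later invoked in Remark \ref{rmk:red-to-bds-seqs} and Section \ref{sec:proof-of-thm}, only through the scale-free comparison $\frac{1}{m}\nu(mb_0D_x)\le\frac{1}{m}\nu(mb_LD_x)$ (resp.\ $\frac{1}{m}\nu(mc_0D_x)\le\frac{1}{m}\nu(mc_UD_x)$) valid for all $m$; the factors $c$ and $b$ appearing in $\gamma_{\expected,b}$ and $\gamma_{\expected,c}$ belong to the single fixed surface under study and are never compared across surfaces. Read the lemma in this normalized sense, keep your first two paragraphs, and drop the supremum-transfer machinery entirely.
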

\begin{proof}
Since $c=pa+qb=4p+3b$, we see $\frac{b}{c} = \frac{1}{4\frac{p}{b}+3}$. As a result, $\frac{b_U}{c_U} < \frac{b_0}{c_0} < \frac{b_L}{c_L}$. It follows that
\[
P_0 := \Conv\left((0,0),(-1,0),\left(-3\frac{b_0}{c_0},4\frac{b_0}{c_0}\right)\right)\,\subset\,\Conv\left((0,0),(-1,0),\left(-3\frac{b_L}{c_L},4\frac{b_L}{c_L}\right)\right)=:P_L
\]
Since $P_0$, resp.~$P_L$, is the polytope of $bD_x$ on $\PP(4,b_0,c_0)$, resp.~$\PP(4,b_L,c_L)$, we have $\nu(nP_0) \leq \nu(nP_L)$ for all $n \geq 1$, and so $\gamma_{\expected,b}(H_0) \leq \gamma_{\expected,b}(H_L)$.

We obtain the inequality $\gamma_{\expected,c}(H_0) \leq \gamma_{\expected,c}(H_U)$ in a similar manner from the inclusion
\[
\Conv\left((0,0),\left(-\frac{c_0}{b_0},0\right),(-3,4)\right)\,\subset\,\Conv\left((0,0),\left(-\frac{c_U}{b_U},0\right),(-3,4)\right),
\]
the lefthand, resp.~righthand, side being the polytope of $cD_x$ on $\PP(4,b_0,c_0)$, resp.~$\PP(4,b_U,c_U)$.
\end{proof}

\begin{remark}
\label{rmk:red-to-bds-seqs}
To prove Theorem \ref{c:P4bc-conjs}, we apply Lemma \ref{l:red-to-bds-seqs} as follows. Let $I=[\frac{\beta_1}{\alpha_1},\frac{\beta_2}{\alpha_2}] = I'_{k, \pm}$ or $I''_{k, \pm}$ as in Proposition \ref{prop:nu-of-winning-poly}. First, consider the case $D_0 \sim m_0bD_x$. Fix any weighted projective space $\PP(4,b,c)$ for which $\frac{b}{-p}\in I$, and let $H$ be the generator of its Cartier class group. We must show $\gamma_{\expected}(H) = \frac{c\nu_0}{m_0}$, with $\nu_0 := \nu(D_0)$ given as in Theorem \ref{c:P4bc-conjs}. This may be done as follows: fix increasing sequences of positive integers $\{b_i\}_i$ and $\{-p_i\}_i$ for which 
\[
\alpha_1b_i-\beta_1(-p_i)=1
\quad\quad\textrm{and}\quad\quad
c_i := 4p_i+3b_i > b_i \textrm{ is such that 4, $b_i$, $c_i$ are pairwise coprime.}
\]
We may always find such sequences, since $\alpha_1,\beta_1 > 0$ are coprime in all cases listed in Theorem \ref{c:P4bc-conjs}. Let $H_i$ denote the generator of the Cartier class group of $\PP(4,b_i,c_i)$. Then for $i$ sufficiently large, $\frac{b_i}{-p_i}\in I$ is monotonically decreasing with $\frac{b_i}{-p_i}\to\frac{\beta_1}{\alpha_1}$. Similarly, fix increasing sequences of positive integers $\{b'_i\}_i$ and $\{-p'_i\}_i$ for which 
\[
\alpha_2b'_i-\beta_2(-p'_i)=-1
\quad\quad\textrm{and}\quad\quad
c'_i := 4p'_i+3b'_i > b'_i \textrm{ is such that 4, $b'_i$, $c'_i$ are pairwise coprime.}
\]
As above, such sequences always exist. Let $H'_i$ denote the generator of the Cartier class group of $\PP(4,b'_i,c'_i)$. Then for $i$ sufficiently large, $\frac{b'_i}{-p'_i}\in I$ is monotonically increasing with $\frac{b'_i}{-p'_i}\to\frac{\beta_2}{\alpha_2}$. By Lemma \ref{l:red-to-bds-seqs}, it then suffices to check the theorem on these sequences of weighted projective spaces, i.e.~first, we need to show
\[
\frac{\nu_0}{m_0}\geq\frac{1}{m}\nu\left(\frac{m}{c_i}H_i\right)
\]
for all $i$ sufficiently large and $m\in\ZZ^+$, with strict inequality whenever $m$ is not a multiple of $m_0$. Then, we need to show
\[
\frac{c'_i\nu_0}{m_0} > \frac{b'_i}{m}\nu\left(\frac{m}{b_i}H'_i\right)
\]
for all $i$ sufficiently large and $m\in\ZZ^+$. A similar strategy applies for the case $D_0 \sim m_0cD_x$. Since there are 4 types of intervals listed in Theorem \ref{c:P4bc-conjs}, and for each interval we must prove a statement for monotonically increasing as well as decreasing sequences, this yields 8 cases that must be checked.
\end{remark}

\section{Ehrhart quasi-polynomials for $bD_x$ and $cD_x$}
\label{sec:Ehrhart}

Our first goal in this section is to give an expression for the number of lattice points in the polytopes $P_{nbD_x}$ and $P_{ncD_x}$.

\begin{proposition}
\label{prop:Ehrhart}
Keep the notation and hypotheses of Theorem \ref{c:P4bc-conjs}, and let $\delta\in\{b,c\}$. Then
\[
|P_{n\delta D_x} \cap \mathbb{Z}^2|=c_2(\delta D_x,n)n^2+c_1(\delta D_x,n)n+c_0(\delta D_x,n),
\]
where the $c_i$'s are given as follows.

\begin{enumerate}
\item Set $s = \frac{b}{c}$. For $\delta=b$, we have $c_2(bD_x,n)=2s$, $c_1(bD_x,n)=\frac{1}{2}( 1 + s + \frac{4}{c} )$, and
\begin{align*}
c_0(bD_x,n)
&= 1 - \frac{1}{8s}\left(\left\{ 4sn \right\}^2  - \left\{ 4sn \right\}\right) -\frac{5}{2}\left\{ sn \right\} + \sum_{j=0}^{4 \left\{ \frac{\lfloor 4sn \rfloor}{4} \right\}} \left\{ \frac{3}{4}j \right\} \\
&+ \frac{b-1}{2} \left\{ \frac{4n}{c} \right\} 
- \sum_{j=0}^{b\left\{\frac{\lfloor 4sn \rfloor}{b}\right\}} \left\{ \frac{-p}{b}j \right\}.
\end{align*}

\item For $\delta=c$, we have $c_2(cD_x,n) = \frac{2}{s}$, $c_1(cD_x,n) = \frac{1}{2}( 1 + \frac{1}{s} + \frac{4}{b})$, and
\[
c_0(cD_x,n) = 1 - \left\{ \frac{4n}{b} \right\} - \frac{b-1}{2} \left\{ \frac{4n}{b} \right\} + \sum_{j=0}^{b\{\frac{4n}{b}\}} \left\{ \frac{-p}{b}j \right\}.
\]
\end{enumerate}
\end{proposition}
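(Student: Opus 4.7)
The plan is a direct row-by-row lattice-point count, reorganized into polynomial-in-$n$ main terms and quasi-periodic corrections matching the stated formula.

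First, I describe the polytopes. Both $P_{nbD_x}$ and $P_{ncD_x}$ are triangles sharing the right edge $u = -3v/4$ from the origin to the apex. For $P_{nbD_x}$ the vertices are $(0,0), (-n,0), (-3ns, 4ns)$; for $P_{ncD_x}$ they are $(0,0), (-n/s, 0), (-3n, 4n)$. Using $c = 4p+3b$ and $s = b/c$ gives $1 - 3s = 4p/c$, from which a direct calculation shows the left edge is $u_L(v) = -n + pv/b$ in the $\delta=b$ case and $u_L(v) = (-nc+pv)/b$ in the $\delta=c$ case. For each integer $v$ with $0 \le v \le v_{\max}$ (where $v_{\max} = \lfloor 4sn \rfloor$ for $\delta=b$ and $v_{\max} = 4n$ for $\delta=c$), the number of lattice points in row $v$ is $\lfloor -3v/4\rfloor + \lfloor -u_L(v)\rfloor + 1$, and I sum this expression over $v$.

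Second, I write $\lfloor x \rfloor = x - \{x\}$ and separate each row sum into a polynomial piece and two fractional-part correction sums. The polynomial piece $\sum_v (1 - 3v/4 - u_L(v))$ evaluates by elementary arithmetic-progression formulas; after substituting $c = 4p+3b$, its coefficient of $n^2$ matches $c_2(\delta D_x, n)$ and part of the $n$-coefficient matches part of $c_1(\delta D_x, n)$. The fractional-part sum $-\sum_v \{-3v/4\}$ is $4$-periodic with period total $3/2$; the identity $\{-3v/4\} = 1 - [4\mid v] - \{3v/4\}$ (valid for $v \not\equiv 0 \pmod 4$) absorbs the linear-in-$n$ contribution into $c_1$ and leaves a partial-period term $+\sum_{j=0}^{r}\{3j/4\}$, where $r = \lfloor 4sn \rfloor \bmod 4 = 4\{\lfloor 4sn\rfloor/4\}$, matching the corresponding term in $c_0(bD_x, n)$. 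The fractional-part sum along the left edge is an arithmetic progression modulo~$1$ with step $p/b$; since $\gcd(p,b) = 1$, each full period of length $b$ contributes $(b-1)/2$ (absorbed into $c_1$), while the partial period produces the $-\sum_{j=0}^{b\{\cdots\}}\{-pj/b\}$ and $\frac{b-1}{2}\{\cdots\}$ terms of $c_0$. For $\delta = c$, because $v_{\max} = 4n$ is a multiple of $4$, the partial-period sum for $\{-3v/4\}$ is empty, which is why the constant term $c_0(cD_x, n)$ is shorter.

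The main obstacle is the $\delta = b$ case, where the apex $(-3ns, 4ns)$ is irrational in both coordinates. The upper limit $V = \lfloor 4sn \rfloor$ introduces cross terms of the form $\phi \cdot n$ (with $\phi = \{4sn\}$) in $V(V+1)/2$, which are neither polynomial nor purely periodic in~$n$. Rewriting $\phi = 4sn - V$ and $sn = \lfloor sn\rfloor + \{sn\}$ isolates the genuinely polynomial part and produces exactly the corrections $-\frac{1}{8s}(\{4sn\}^2 - \{4sn\})$ and $-\frac{5}{2}\{sn\}$ appearing in $c_0(bD_x, n)$. Collecting all contributions and again using $c = 4p+3b$ to simplify, one verifies that the $n^2$- and $n$-coefficients come out independent of $n$ (matching the stated $c_2$ and $c_1$), while the remaining terms assemble into the claimed $c_0$. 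The bookkeeping is tedious but routine; no new idea beyond the splitting described above is needed.
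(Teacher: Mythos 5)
Your proposal is correct and takes essentially the same approach as the paper: a row-by-row lattice count, writing each floor as its linear part minus a fractional part, summing the $4$-periodic and $b$-periodic fractional-part sums by full periods (absorbed into $c_1$) plus a partial period (left in $c_0$), and converting the irrational upper limit $\lfloor 4sn\rfloor$ into the $\{4sn\}$ and $\{sn\}$ corrections exactly as you indicate. The only minor difference is that for $\delta=c$ the paper counts the polytope of the linearly equivalent divisor $4nD_z$, so the slanted edge passes through the origin, whereas your direct count of $P_{ncD_x}$ requires the reindexing $v\mapsto 4n-v$ (using $nc\equiv 4np \bmod b$) to bring the partial-period sum into the stated form and to account for the flipped signs of the $\left\{\frac{4n}{b}\right\}$ and $\sum_j\left\{\frac{-p}{b}j\right\}$ terms in $c_0(cD_x,n)$.
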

\begin{proof}
First consider $\delta = b$.
\[
|P_{nbD_x} \cap \mathbb{Z}^2| := \left|\Conv(A,B,C)\right| := \left|\Conv\left((0,0),(-n,0),\left(-3n\frac{b}{c},4n\frac{b}{c}\right)\right)\right|
\]
where $BC$ is given by $y=\frac{b}{p}x+\frac{b}{p}n$ and $AC$ is given by $y=-\frac{4}{3}x$. We will compute $|P_{nbD_x} \cap \mathbb{Z}^2|$ by counting the number of lattice points lying on each line segment $A_jB_j$, where $A_j = (-\frac{3}{4}j,j)$ lies on $AC$ and $B_j = (\frac{p}{b}j - n, j)$ lies on $BC$, for $j = 0,1,\dots,\lfloor 4sn \rfloor$. Here, our approach is similar to that of \cite[Theorem 3.1]{L11}. Denote $M=\lfloor 4sn \rfloor = 4sn - \{ 4sn \}$. Then,
\begin{align*}
|P_{nbD_x} \cap \mathbb{Z}^2| &= \sum_{j=0}^M \left( \left\lfloor -\frac{3}{4}j \right\rfloor - \left\lceil \frac{p}{b}j-n \right\rceil + 1 \right)  \\
&= (n+1)(M+1) + \sum_{j=0}^M \left( -\left\lceil \frac{3}{4}j \right\rceil + \left\lfloor \frac{-p}{b}j \right\rfloor \right) \\
&= (n+1)(M+1) + \sum_{j=0}^M \left( - \left( \frac{3}{4}j + 1 - \left\{ \frac{3}{4}j \right\} \right) +\frac{-p}{b}j  - \left\{ \frac{-p}{b}j \right\} \right) + \left\lfloor \frac{M}{4} \right\rfloor + 1 \\
&= n(M+1) + \frac{M}{4} - \left\{ \frac{M}{4} \right\} + 1 + \sum_{j=0}^M \left( \frac{-p}{b}-\frac{3}{4} \right) j  + \sum_{j=0}^M \left\{ \frac{3}{4}j \right\} - \sum_{j=0}^M \left\{ \frac{-p}{b}j \right\}. \\
\end{align*}
Rewrite the sums involving fractional parts as sums of a linear term in $n$ and a $c-$periodic term in $n$:
\begin{align*}
\sum_{j=0}^M \left\{ \frac{3}{4}j \right\}
&= \left\lfloor \frac{M}{4} \right\rfloor \sum_{j=0}^3 \left\{ \frac{3}{4}j \right\} + \sum_{j=0}^{4\left\{ \frac{M}{4}\right\}} \left\{ \frac{3}{4}j \right\} 
= \frac{3}{2}\left(sn - \left\{ sn \right\}\right) + \sum_{j=0}^{4 \left\{ \frac{\lfloor 4sn \rfloor}{4} \right\}} \left\{ \frac{3}{4}j \right\}, \\
\sum_{j=0}^M \left\{ \frac{-p}{b}j \right\}
&= \left\lfloor \frac{M}{b} \right\rfloor \sum_{j=0}^{b-1} \left\{ \frac{-p}{b}j \right\} + \sum_{j=0}^{b\left\{\frac{M}{b}\right\}} \left\{ \frac{-p}{b}j \right\} 
= \frac{b-1}{2}\left(\frac{4n}{c} - \left\{ \frac{4n}{c} \right\}\right) + \sum_{j=0}^{b\left\{\frac{\lfloor 4sn \rfloor}{b}\right\}} \left\{ \frac{-p}{b}j \right\},
\end{align*}
where we have used the identity
\[
\sum_{j=0}^{b-1} \left\{ \frac{-p}{b}j \right\} = \frac{b-1}{2}
\]
given that $b$ and $p$ are coprime. Moreover, by a direct computation,
\[
\sum_{j=0}^M \left( \frac{-p}{b}-\frac{3}{4} \right) j =
-\binom{M+1}{2}\frac{c}{4b} 
= -2sn^2 + \left(\left\{ 4sn \right\} - \frac{1}{2}\right)n - \frac{1}{8s}\left(\left\{ 4sn \right\}^2  - \left\{ 4sn \right\}\right). \\
\]
Thus, we can write $|P_{nbD_x} \cap \mathbb{Z}^2| = c_2(bD_x,n)n^2 + c_1(bD_x,n)n+c_0(bD_x,n)$, where
\begin{align*}
c_2(bD_x,n) &= 
2s, \\
c_1(bD_x,n) &= 
\frac{1}{2}\left( 1 + s + \frac{4}{c} \right), \\
c_0(bD_x,n) 
&= 1 - \frac{1}{8s}\left(\left\{ 4sn \right\}^2  - \left\{ 4sn \right\}\right) -\frac{5}{2}\left\{ sn \right\} + \sum_{j=0}^{4 \left\{ \frac{\lfloor 4sn \rfloor}{4} \right\}} \left\{ \frac{3}{4}j \right\} + \frac{b-1}{2} \left\{ \frac{4n}{c} \right\} - \sum_{j=0}^{b\left\{\frac{\lfloor 4sn \rfloor}{b}\right\}} \left\{ \frac{-p}{b}j \right\}.
\end{align*}

Likewise, we may find the Ehrhart quasi-polynomial for $|P_{ncD_x} \cap \mathbb{Z}^2|$. To simplify our calculations, we may consider $naD_z = 4nD_z \sim ncD_x$. By linear equivalence, $|P_{naD_z} \cap \mathbb{Z}^2| = |P_{ncD_x} \cap \mathbb{Z}^2|$. The polytope of $naD_z$ is given by 
\[
P_{naD_z} = \Conv(A',B',C') := \Conv\left((0,0),\left(-\frac{4p}{b}n,-4n\right), (3n,-4n)\right),
\]
with $A'C'$ contained in the line $y=-\frac{4}{3}x$ and $A'B'$ contained in the line $y=\frac{b}{p}x$. 
Similarly as before:
\begin{align*}
|P_{naD_z} \cap \mathbb{Z}^2| 
&= \sum_{j=0}^{4n} \left( \left\lfloor \frac{3}{4}j \right\rfloor - \left\lceil \frac{-p}{b}j \right\rceil + 1 \right) \\
&= \sum_{j=0}^{4n} \left( \frac{3}{4}j - \left\{ \frac{3}{4}j \right\} \right) - \sum_{j=0}^{4n} \left( \frac{-p}{b}j + 1 - \left\{ \frac{-p}{b}j \right\} \right) + \left\lfloor \frac{4n}{b} \right\rfloor + 1 + 4n + 1 \\
&= \frac{c}{4b}\binom{4n+1}{2} - \frac{3}{2}n + \frac{2(b-1)}{b}n - \frac{b-1}{2} \left\{ \frac{4n}{b} \right\} + \sum_{j=0}^{b\{\frac{4n}{b}\}} \left\{ \frac{-p}{b}j \right\} + \frac{4n}{b} + 1 - \left\{ \frac{4n}{b} \right\},
\end{align*}
using expressions that we obtained previously. Thus, we can write $|P_{ncD_x} \cap \mathbb{Z}^2| = |P_{naD_z} \cap \mathbb{Z}^2| = c_2(cD_x,n)n^2 + c_1(cD_x,n)n + c_0(cD_x,n)$, where
\begin{align*}
c_2(cD_x,n) &= \frac{2}{s}, \\
c_1(cD_x,n) &= \frac{1}{2}\left( 1 + \frac{1}{s} + \frac{4}{b} \right) n, \\
c_0(cD_x,n) &= 1 - \left\{ \frac{4n}{b} \right\} - \frac{b-1}{2} \left\{ \frac{4n}{b} \right\} + \sum_{j=0}^{b\{\frac{4n}{b}\}} \left\{ \frac{-p}{b}j \right\}.\qedhere
\end{align*}
\end{proof}

Our next goal is to give upper bounds on the constant terms of the Ehrhart quasi-polynomials of $|P_{n\delta D_x} \cap \mathbb{Z}^2|$, $\delta = b, c$. In Proposition \ref{prop:Ehrhart}, notice that the expressions of the last two terms of $c_0(bD_x,n)$ and $c_0(cD_x,n)$ are of the same form, which we will analyze in depth in Section \ref{sec:last2terms}. In the following, we give a uniform upper bound on $c_0(bD_x,n)$ minus its last two terms.
\begin{lemma}
\label{l:3rd-n-4th-term}
In the expression of $c_0(bD_x,n)$, we have
\[
-\frac{5}{2}\left\{ sn \right\} + \sum_{j=0}^{4 \left\{ \frac{\lfloor 4sn \rfloor}{4} \right\}} \left\{ \frac{3}{4}j \right\}\leq\frac{1}{8}
\]
for all $n \geq 0$, where $s = \frac{b}{c}$. Furthermore:
\begin{enumerate}
\item The above expression is positive if and only if $\frac{1}{4}<\{sn\}<\frac{3}{10}$.
\item The above expression is greater than $\frac{-1}{32s}$ only if $\{ sn \} < \frac{1}{2} + \frac{1}{80s}$.
\end{enumerate}
\end{lemma}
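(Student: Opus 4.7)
The plan is to reduce the entire statement to the analysis of a single piecewise-linear function of the real variable $f := \{sn\}\in[0,1)$. Setting $M := \lfloor 4sn\rfloor$, one has $4\{M/4\} = M \bmod 4 = \lfloor 4f\rfloor \in \{0,1,2,3\}$, so the sum $S(f) := \sum_{j=0}^{\lfloor 4f\rfloor}\{\tfrac{3}{4}j\}$ is locally constant, taking values $0,\tfrac{3}{4},\tfrac{5}{4},\tfrac{3}{2}$ on the four intervals $[0,\tfrac{1}{4})$, $[\tfrac{1}{4},\tfrac{1}{2})$, $[\tfrac{1}{2},\tfrac{3}{4})$, $[\tfrac{3}{4},1)$, respectively. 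Hence $E(f) := -\tfrac{5}{2}f + S(f)$ is piecewise linear with slope $-\tfrac{5}{2}$ on each of these four pieces, so all further analysis is a finite case check.

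For the main inequality I would simply evaluate $E$ at the left endpoints of the four pieces, obtaining $0,\tfrac{1}{8},0,-\tfrac{3}{8}$. Since $E$ decreases strictly on each subinterval, the global maximum is $\tfrac{1}{8}$, attained at $f = \tfrac{1}{4}$, which yields the bound $E \leq \tfrac{1}{8}$.

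For part (1), the piecewise description shows $E \leq 0$ on $[0,\tfrac{1}{4})$ (with equality only at $f=0$), $E > 0$ on $[\tfrac{1}{4},\tfrac{1}{2})$ exactly when $f < \tfrac{3}{10}$, $E \leq 0$ on $[\tfrac{1}{2},\tfrac{3}{4})$, and $E < 0$ on $[\tfrac{3}{4},1)$. Thus $E > 0$ iff $f \in [\tfrac{1}{4},\tfrac{3}{10})$. Since $a=4$ is coprime to $c$, $c$ must be odd, so $f = \{bn/c\}$ is an integer multiple of $1/c$ and cannot equal $\tfrac{1}{4}$ exactly; the positivity region reduces to $(\tfrac{1}{4},\tfrac{3}{10})$, matching the claim.

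For part (2), I would argue by contrapositive: assuming $f \geq \tfrac{1}{2} + \tfrac{1}{80s}$, show $E \leq -\tfrac{1}{32s}$. On the third piece $[\tfrac{1}{2},\tfrac{3}{4})$, substituting into $E = -\tfrac{5}{2}f + \tfrac{5}{4}$ gives exactly $-\tfrac{1}{32s}$ at the threshold, and since $E$ is decreasing the bound persists above it. On the fourth piece $[\tfrac{3}{4},1)$ one has $E \leq -\tfrac{3}{8}$, and $-\tfrac{3}{8} \leq -\tfrac{1}{32s}$ reduces to $s \geq \tfrac{1}{12}$; this follows from the theorem's hypothesis $\tfrac{b}{-p} \in (2,\tfrac{16}{3})$, which via $c = 3b - 4(-p)$ gives $\tfrac{c}{b} \in (1,\tfrac{9}{4})$ and hence $s = \tfrac{b}{c} \geq \tfrac{4}{9} \geq \tfrac{1}{12}$. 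The only non-mechanical point in the whole argument is remembering to invoke the a priori lower bound $s \geq \tfrac{4}{9}$ in the fourth-piece case of (2); if $s$ were treated as a free parameter in $(0,1)$, the bound $-\tfrac{3}{8}\leq -\tfrac{1}{32s}$ would fail for small $s$.
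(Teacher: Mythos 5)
Your argument is correct and takes essentially the same route as the paper's proof: both reduce to a four-case analysis according to $\lfloor 4\{sn\}\rfloor\in\{0,1,2,3\}$, on each quarter-interval the expression is linear in $\{sn\}$ with slope $-\tfrac{5}{2}$, and evaluating at the left endpoints gives the bound $\tfrac18$, the positivity window $(\tfrac14,\tfrac{3}{10})$, and the $-\tfrac{1}{32s}$ threshold at $\{sn\}=\tfrac12+\tfrac{1}{80s}$. Your two additional remarks --- ruling out $\{sn\}=\tfrac14$ because $c$ is odd, and invoking $s\geq\tfrac49$ (from $2<\tfrac{b}{-p}<\tfrac{16}{3}$) to dispose of the fourth piece in part (2) --- are correct and make explicit points that the paper's proof treats only implicitly.
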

\begin{proof}
Let $bn=mc+r$ with $0\leq r\leq c-1$, 
so that $\{sn\}=\frac{r}{c}$. Let $\ell = 0,1,2,3$ be the integer such that $\frac{\ell c}{4}\leq r<\frac{(\ell+1)c}{4}$. Then $\lfloor 4sn\rfloor=4m+\ell$ and so $4 \left\{ \frac{\lfloor 4sn \rfloor}{4} \right\}=\ell$.

Now, we will bound the given expression from the above for each $\ell = 0, 1, 2, 3$. If $\ell=0$, then the given expression in the lemma is $-\frac{5r}{2c}\leq0$. If $\ell=1$, we have $-\frac{5r}{2c}+\frac{3}{4}\leq-\frac{5}{2}\cdot\frac{1}{4}+\frac{3}{4}=\frac{1}{8}$. If $\ell=2$, we have $-\frac{5r}{2c}+\frac{3}{4}+\frac{2}{4}\leq-\frac{5}{2}\cdot\frac{1}{2}+\frac{5}{4}=0$. Lastly, if $\ell=3$, we have $-\frac{5r}{2c}+\frac{3}{4}+\frac{2}{4}+\frac{1}{4}\leq-\frac{5}{2}\cdot\frac{3}{4}+\frac{3}{2}=-\frac{3}{8}$.

For the final statements of the lemma, we see the expression is non-positive if $\ell\neq1$, and so we must have $\frac{1}{4}<\{sn\}$. When $\ell=1$, we computed the expression is equal to $-\frac{5r}{2c}+\frac{3}{4}$, which is positive if and only if $\{sn\}=\frac{r}{c}<\frac{3}{10}$. 

Similarly, the expression could be greater than $\frac{-1}{32s}$ in cases $\ell = 0, 1, 2$. (Note that $s<1$ because $b<c$.)  Working case by case with the expressions obtained, we obtain that $\{ sn \} = \frac{r}{c} < \frac{1}{2} + \frac{1}{80s}$ in order for the expression to be greater than $\frac{-1}{32s}$.
\end{proof}

Note that $-\frac{1}{8s}(\{ 4sn \}^2-\{ 4sn \})\leq\frac{1}{32s}$ since the function $x-x^2$ is maximized at $x=\frac{1}{2}$. Combining this observation with Lemma \ref{l:3rd-n-4th-term}, we obtain the following corollary.

\begin{corollary}
\label{l:1st-through-4th-terms}
We have
\[
c_0(bD_x,n)\leq \frac{9}{8}+\frac{1}{32s}+\frac{b-1}{2} \left\{ \frac{4n}{c} \right\} - \sum_{j=0}^{b\left\{\frac{\lfloor 4sn \rfloor}{b}\right\}} \left\{ \frac{-p}{b}j \right\}.
\]
Moreover, if  $\{ sn \} \geq \frac{1}{2} + \frac{1}{80s}$, we may improve the above bound as follows:
\[
c_0(bD_x,n)\leq 1+\frac{b-1}{2} \left\{ \frac{4n}{c} \right\} - \sum_{j=0}^{b\left\{\frac{\lfloor 4sn \rfloor}{b}\right\}} \left\{ \frac{-p}{b}j \right\}.
\]
\end{corollary}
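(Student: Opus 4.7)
The plan is to combine the explicit formula for $c_0(bD_x,n)$ given in Proposition \ref{prop:Ehrhart}(1) with the bounds already established in Lemma \ref{l:3rd-n-4th-term} and in the observation immediately preceding the corollary. Specifically, I would split $c_0(bD_x,n)$ into two groups: the ``first four terms''
\[
T_1 := 1 - \frac{1}{8s}\bigl(\{4sn\}^2 - \{4sn\}\bigr) - \frac{5}{2}\{sn\} + \sum_{j=0}^{4\{\lfloor 4sn\rfloor/4\}}\left\{\tfrac{3}{4}j\right\},
\]
and the ``last two terms'' $T_2 := \tfrac{b-1}{2}\{4n/c\} - \sum_{j=0}^{b\{\lfloor 4sn\rfloor/b\}}\{(-p/b)j\}$, which we leave untouched. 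The whole proof then reduces to bounding $T_1$, since $c_0(bD_x,n) = T_1 + T_2$.

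For the first (unconditional) bound, Lemma \ref{l:3rd-n-4th-term} gives $-\tfrac{5}{2}\{sn\} + \sum_{j=0}^{4\{\lfloor 4sn\rfloor/4\}}\{3j/4\} \leq \tfrac{1}{8}$, and the observation that $x - x^2$ is maximized at $x = \tfrac{1}{2}$ with value $\tfrac{1}{4}$ yields $-\tfrac{1}{8s}(\{4sn\}^2 - \{4sn\}) \leq \tfrac{1}{32s}$. Adding these to the constant $1$ gives $T_1 \leq 1 + \tfrac{1}{8} + \tfrac{1}{32s} = \tfrac{9}{8} + \tfrac{1}{32s}$, which combined with $T_2$ yields the first inequality.

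For the improved bound in the regime $\{sn\} \geq \tfrac{1}{2} + \tfrac{1}{80s}$, I would invoke the contrapositive of Lemma \ref{l:3rd-n-4th-term}(2), which says that in this range the expression $-\tfrac{5}{2}\{sn\} + \sum_{j=0}^{4\{\lfloor 4sn\rfloor/4\}}\{3j/4\}$ is at most $-\tfrac{1}{32s}$. Together with the same estimate $-\tfrac{1}{8s}(\{4sn\}^2 - \{4sn\}) \leq \tfrac{1}{32s}$, the two contributions cancel, and we obtain $T_1 \leq 1$, which gives the sharper bound on $c_0(bD_x,n)$. Since the entire argument is a direct assembly of previously established estimates, there is no genuine obstacle here — the only thing to verify is that the threshold $\tfrac{1}{2} + \tfrac{1}{80s}$ in Lemma \ref{l:3rd-n-4th-term}(2) matches exactly the regime in which the negative contribution $-\tfrac{1}{32s}$ compensates the worst-case value $\tfrac{1}{32s}$ of the quadratic fractional-part term, so the cancellation is clean.
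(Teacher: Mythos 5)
Your proposal is correct and is essentially the paper's own argument: the paper derives this corollary by combining Lemma \ref{l:3rd-n-4th-term} (including the contrapositive of its part (2) for the regime $\{sn\}\geq\tfrac{1}{2}+\tfrac{1}{80s}$) with the observation that $-\tfrac{1}{8s}(\{4sn\}^2-\{4sn\})\leq\tfrac{1}{32s}$, exactly as you do. The cancellation of $\pm\tfrac{1}{32s}$ in the improved bound is precisely how the threshold in Lemma \ref{l:3rd-n-4th-term}(2) was chosen, so nothing further needs checking.
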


\section{Bounding $c_0(bD_x,n)$ and $c_0(cD_x,n)$}
\label{sec:last2terms}

In this section, we prove the key results needed to bound $c_0(bD_x,n)$ and $c_0(cD_x,n)$. This amounts to obtaining bounds for the expression $\frac{b-1}{2} \left\{ \frac{4n}{c} \right\} - \sum_{j=0}^r \{ \frac{-p}{b}j \}$, where $r=b\{\frac{\lfloor 4sn \rfloor}{b}\}$. We begin by recording the following lemma.

\begin{lemma}
\label{l:reduce-to-bounding-rhs}
Let $n,b,c\in\ZZ^+$ with $4<b<c$ and $\gcd(4,b,c)=1$. Let $p<0$ be an integer satisfying $4p+3b=c$ and $s=\frac{b}{c}$. If $r=b\left\{\frac{\lfloor 4sn \rfloor}{b}\right\}$, then
\begin{equation*}
\frac{b-1}{2} \left\{ \frac{4n}{c} \right\} - \sum_{j=0}^r\left\{ \frac{-p}{b}j \right\} \leq\frac{(b-1)(r+1)}{2b} - \sum_{j=0}^{r} \left\{ \frac{-p}{b}j \right\}.
\end{equation*}
\end{lemma}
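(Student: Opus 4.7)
The inequality is equivalent, after cancelling the common sum $\sum_{j=0}^r\{-pj/b\}$ on both sides, to the purely arithmetic claim
\[
\left\{\tfrac{4n}{c}\right\}\;\leq\;\frac{r+1}{b},
\]
and this is the only thing that must be established. The plan is to unwind the definition of $r$ together with the relation $\frac{4n}{c}=\frac{4sn}{b}$ (which holds because $s=b/c$) and read off the fractional part of $4n/c$ directly.

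First, write $M:=\lfloor 4sn\rfloor$ and perform the division by $b$: let $M=bt+r'$ with $t\in\ZZ_{\geq 0}$ and $0\leq r'\leq b-1$. By definition $r=b\bigl\{M/b\bigr\}=M-b\lfloor M/b\rfloor=r'$, so in particular $r\in\{0,1,\dots,b-1\}$. Next, using $4n/c=4sn/b$, decompose
\[
\frac{4n}{c}\;=\;\frac{M+\{4sn\}}{b}\;=\;t+\frac{r+\{4sn\}}{b}.
\]
Since $0\leq r\leq b-1$ and $0\leq\{4sn\}<1$, the remainder $r+\{4sn\}$ lies in $[0,b)$, so the quantity $\frac{r+\{4sn\}}{b}$ is already in $[0,1)$. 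Hence
\[
\left\{\tfrac{4n}{c}\right\}\;=\;\frac{r+\{4sn\}}{b}\;\leq\;\frac{r+1}{b},
\]
the last inequality following from $\{4sn\}<1$.

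Multiplying both sides by $\frac{b-1}{2}\geq 0$ and subtracting $\sum_{j=0}^r\{-pj/b\}$ from both sides yields the conclusion of the lemma. There is no real obstacle here: the only subtlety is being careful that $r+\{4sn\}$ stays below $b$ so that it really is the fractional part of $b\cdot(4n/c)$ modulo $b$; this is automatic from $r\leq b-1$ and $\{4sn\}<1$. The hypotheses $p<0$, $4p+3b=c$, and $\gcd(4,b,c)=1$ play no role in this step and will only be invoked in the subsequent sections when the sum $\sum_{j=0}^r\{-pj/b\}$ itself is analyzed.
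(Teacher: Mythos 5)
Your proof is correct and follows essentially the same route as the paper: both decompose $4sn = b\lfloor\lfloor 4sn\rfloor/b\rfloor + r + \{4sn\}$, identify $\left\{\frac{4n}{c}\right\} = \frac{r+\{4sn\}}{b}$ using $r\leq b-1$ and $\{4sn\}<1$, and conclude by bounding $\{4sn\}$ by $1$. The only difference is cosmetic — you make explicit the cancellation of the common sum and the final inequality step, which the paper leaves implicit.
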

\begin{proof}
Since $r$ is the reminder when $b$ is divided into $\lfloor 4sn\rfloor$, we have $4sn = \lfloor 4sn\rfloor + \{4sn\}=b\lfloor\frac{\lfloor 4sn \rfloor}{b}\rfloor + r+\{4sn\}$. So, 
\[
\left\{ \frac{4n}{c} \right\}=\left\{\frac{4sn}{b}\right\}=\left\{\left\lfloor\frac{\lfloor 4sn \rfloor}{b}\right\rfloor + \frac{r}{b}+\frac{\{4sn\}}{b}\right\}=\frac{r+\{4sn\}}{b},
\]
where the last equality uses $0\leq\frac{r}{b}\leq\frac{b-1}{b}$ and $0\leq\frac{\{4sn\}}{b}<\frac{1}{b}$.
\end{proof}

By the above lemma, it suffices to bound the expression on the righthand side. In \S\ref{subsec:algorithm-for-bounds}, we give a general algorithm to obtain bounds on expressions of the form $\frac{(\beta-1)(u+1)}{2\beta} - \sum_{j=0}^{u} \{ \frac{\alpha}{\beta}j\}$ when $\alpha$ and $\beta$ satisfy particular Diophantine equations, see Corollary \ref{cor:last-two-terms-bd-iter}.

\subsection{An algorithm to bound expressions of the form $\frac{(\beta-1)(u+1)}{2\beta} - \sum_{j=0}^{u} \{ \frac{\alpha}{\beta}j\}$}
\label{subsec:algorithm-for-bounds}

Our goal in this subsection is to prove

\begin{proposition}
\label{prop:specific-cor-last-two-terms-bd-iter}
Suppose that $\alpha_0 > \alpha_1$, $\beta_0 > \beta_1$, and
\[
\alpha_1\beta_0 - \beta_1\alpha_0 = \sigma = \pm 1
\]
where $\alpha_i, \beta_i \in \mathbb{Z}^+$. Let $u_0 = \beta_1t_1 + u_1$, where $u_0,u_1,t_1\in\ZZ^{\geq 0}$ and $0 \leq u_i < \beta_i$ for all $i$. Then 
\[
\frac{(u_0+1)(\beta_0-1)}{2\beta_0} - \sum_{j=0}^{u_0}\left\{ \frac{\alpha_0}{\beta_0}j \right\}= \frac{(u_1+1)(\beta_1-1)}{2\beta_1} - \sum_{j=0}^{u_1} \left\{ \frac{\alpha_1 j}{\beta_1} \right\} +\epsilon(\sigma,t_1,u_1,\beta_0,\beta_1),
\]
where
\[
\epsilon(\sigma,t,u,\beta',\beta)=\frac{(u+1)(\sigma u+\beta'-\beta)}{2\beta'\beta}+\frac{\sigma t(\beta (t-\sigma) + 2u+1-\beta')}{2\beta'}+\Delta(\sigma,t,u,\beta',\beta)
\]
and $\Delta(\sigma,t,u,\beta',\beta)=1$ if $\beta'-\beta\leq \beta t+u$ and $\sigma = -1$, and $\Delta(\sigma,u,\beta',\beta)=0$ otherwise.
\end{proposition}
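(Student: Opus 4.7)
The strategy is a Euclidean-style reduction from the modulus $\beta_0$ to the modulus $\beta_1$, driven by the Diophantine relation rearranged as $\alpha_0\beta_1 = \alpha_1\beta_0 - \sigma$. First, I would split the index $j \in \{0,1,\ldots,u_0\}$ via $j = \beta_1 i + r$ with $0 \leq r < \beta_1$ and $0 \leq i \leq t_1$ (with $r \leq u_1$ forced in the terminal block $i = t_1$). Substituting into $\alpha_0 j$ and applying the Diophantine identity, the integer contribution $\alpha_1\beta_0 i$ drops out of the fractional part, yielding
\[
\left\{\frac{\alpha_0 j}{\beta_0}\right\} = \left\{\frac{\alpha_0 r - \sigma i}{\beta_0}\right\}.
\]
Next I would compare $\alpha_0 r/\beta_0$ to the target $\alpha_1 r/\beta_1$ appearing on the right-hand side of the claim. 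Writing $\alpha_1 r = q_r\beta_1 + \rho_r$ with $\rho_r \in \{0,1,\ldots,\beta_1-1\}$ and multiplying by $\beta_0$, the Diophantine identity again gives $\alpha_0 r = q_r\beta_0 + (\rho_r\beta_0 - \sigma r)/\beta_1$ with the second summand an integer. Combining the two substitutions yields the clean form
\[
\left\{\frac{\alpha_0 j}{\beta_0}\right\} = \left\{\frac{\rho_r\beta_0 - \sigma j}{\beta_0\beta_1}\right\}.
\]

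The key technical difficulty lies in extracting the true fractional part from this last display. Because $\sigma = \pm 1$ forces $\gcd(\alpha_1,\beta_1) = 1$, one has $\rho_r \neq 0$ whenever $r \neq 0$, which blocks degenerate sign flips for most terms. For $\sigma = +1$, however, the numerator $\rho_r\beta_0 - j$ can dip below $0$ once $j > \rho_r\beta_0$, while for $\sigma = -1$ the numerator $\rho_r\beta_0 + j$ can exceed $\beta_0\beta_1$ once $j$ is large enough. Carefully tracking these boundary crossings over each block produces the $\Delta$ correction in the definition of $\epsilon$: the condition $\beta_0-\beta_1 \leq \beta_1 t_1 + u_1 = u_0$ is exactly the threshold at which such a crossing occurs in the truncated terminal block when $\sigma = -1$.

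Finally, I would sum the identity over all admissible $(i,r)$, decomposing the result into three pieces. The $\rho_r\beta_0/(\beta_0\beta_1) = \{\alpha_1 r/\beta_1\}$ part, combined with the classical identity $\sum_{r=0}^{\beta_1-1}\{\alpha_1 r/\beta_1\} = (\beta_1-1)/2$ applied to the $t_1$ complete blocks, reproduces the $\beta_1$-sum on the right-hand side of the claim up to an explicit polynomial in $t_1$ and $\beta_1$. The $-\sigma j/(\beta_0\beta_1)$ part is an arithmetic progression summing to $-\sigma u_0(u_0+1)/(2\beta_0\beta_1)$. The third piece aggregates the integer-part corrections from the previous paragraph. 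Substituting $u_0 = \beta_1 t_1 + u_1$ and expanding the normalizer $(u_0+1)(\beta_0-1)/(2\beta_0) - (u_1+1)(\beta_1-1)/(2\beta_1)$, all these contributions should collapse algebraically into the stated compact form of $\epsilon(\sigma,t_1,u_1,\beta_0,\beta_1)$. The main obstacle is the bookkeeping of the asymmetry between the two signs: the single $\Delta$ term appears only for $\sigma = -1$, so in the $\sigma = +1$ case the negative-numerator corrections from intermediate blocks $i < t_1$ must telescope or cancel against other contributions, and verifying this precisely is where I expect the proof to be most delicate.
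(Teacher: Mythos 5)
Your reduction engine is sound and is, in essence, the paper's own: the identity $\left\{\frac{\alpha_0 j}{\beta_0}\right\}=\left\{\frac{\rho_r\beta_0-\sigma j}{\beta_0\beta_1}\right\}$ (with $j=\beta_1 i+r$ and $\rho_r=\alpha_1 r\bmod\beta_1$) is the same mechanism as Lemma \ref{l:frac-parts-loewr-sum-new} combined with the shift relation (\ref{eqn:subtractbeta1}), and summing blockwise using $\sum_{r=0}^{\beta_1-1}\left\{\frac{\alpha_1 r}{\beta_1}\right\}=\frac{\beta_1-1}{2}$ is exactly how Corollaries \ref{cor:specific-cor-last-two-terms-bd-iter-k-error} and \ref{cor:specific-cor-last-two-terms-bd-iter-h-error} are assembled. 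The genuine gap is in your accounting of the wrap-around corrections, i.e.\ precisely the step you flag as delicate. Under the stated hypothesis $u_0<\beta_0$ one has, for $\sigma=-1$, $0\le\rho_r\beta_0+j\le(\beta_1-1)\beta_0+(\beta_0-1)=\beta_0\beta_1-1$, so the numerator never reaches $\beta_0\beta_1$ and \emph{no} crossing occurs, in the truncated terminal block or anywhere else; the threshold $\beta_0-\beta_1\le u_0$ you propose is not a crossing threshold here. For $\sigma=+1$ the crossings occur exactly at $r=0$, $j=\beta_1,2\beta_1,\dots,\beta_1t_1$ (for $r\neq0$ one has $\rho_r\ge1$, hence $\rho_r\beta_0-j\ge\beta_0-j>0$), contributing exactly $t_1$; nothing needs to ``telescope away'' --- that $t_1$ is absorbed into the polynomial part of $\epsilon$.

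Consequently, carried out to the end, your computation gives
\[
\frac{(u_0+1)(\beta_0-1)}{2\beta_0}-\sum_{j=0}^{u_0}\left\{\frac{\alpha_0 j}{\beta_0}\right\}
=\frac{(u_1+1)(\beta_1-1)}{2\beta_1}-\sum_{j=0}^{u_1}\left\{\frac{\alpha_1 j}{\beta_1}\right\}
+\epsilon(\sigma,t_1,u_1,\beta_0,\beta_1)-\Delta(\sigma,t_1,u_1,\beta_0,\beta_1),
\]
i.e.\ the displayed formula \emph{without} its $\Delta$ term: for $\sigma=+1$ the total correction $t_1$ recombines with $\frac{\sigma u_0(u_0+1)}{2\beta_0\beta_1}$ into the stated polynomial, and for $\sigma=-1$ nothing extra appears. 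In particular your route cannot produce the statement as printed in the range $\sigma=-1$, $\beta_0-\beta_1\le u_0<\beta_0$, and indeed the printed identity fails there: for $(\alpha_0,\beta_0)=(3,5)$, $(\alpha_1,\beta_1)=(1,2)$, $u_0=3$, $t_1=u_1=1$, both sides minus $\epsilon$ vanish, while $\epsilon(-1,1,1,5,2)=\frac15-\frac15+1=1$. The correction recorded by $\Delta$ (coming from the crossing at $j=\beta_0$ in (\ref{eqn:subtractbeta1})) only enters if the summation range reaches $j\ge\beta_0$, which the hypothesis $u_0<\beta_0$ excludes. So the concrete defect in your proposal is the claim that careful tracking ``produces the $\Delta$ correction'': it does not, and the honest completion of your (otherwise correct) computation proves the identity with $\epsilon-\Delta$ --- which, being no larger than $\epsilon$, still supports every upper bound in which the proposition is later used --- rather than the displayed equality.
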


When applied iteratively, we arrive at the following algorithm.

\begin{corollary}
\label{cor:last-two-terms-bd-iter}
Suppose we have sequences of positive integers $\alpha_0 > \alpha_1 > \dots > \alpha_N$ and $\beta_0 > \beta_1 > \dots > \beta_N$ such that for all $i$, 
\[
\alpha_i\beta_{i-1} - \beta_i\alpha_{i-1} = \sigma_i = \pm 1.
\]
Let $u_0,\dots,u_N$ and $t_1,\dots,t_N$ be non-negative integers satisfying
\[
u_{i-1} = \beta_it_i + u_i
\]
and $0 \leq u_i < \beta_i$ for all $i$. Then
\[
\frac{(u_0+1)(\beta_0-1)}{2\beta_0} - \sum_{j=0}^{u_0}\left\{ \frac{\alpha_0}{\beta_0}j \right\}= \frac{(u_N+1)(\beta_N-1)}{2\beta_N} - \sum_{j=0}^{u_N} \left\{ \frac{\alpha_N j}{\beta_N} \right\} + \sum_{i=1}^N \epsilon(\sigma_i,t_i,u_i,\beta_{i-1},\beta_i)
\]
where $\epsilon$ is as in Proposition \ref{prop:specific-cor-last-two-terms-bd-iter}.
\end{corollary}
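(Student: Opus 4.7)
The plan is a straightforward induction on $N$, with Proposition \ref{prop:specific-cor-last-two-terms-bd-iter} serving as both the base case and the inductive step. The base case $N=1$ is literally the statement of Proposition \ref{prop:specific-cor-last-two-terms-bd-iter}: the hypotheses $\alpha_0 > \alpha_1$, $\beta_0 > \beta_1$, $\alpha_1\beta_0 - \beta_1\alpha_0 = \sigma_1 = \pm 1$, and the decomposition $u_0 = \beta_1 t_1 + u_1$ with $0 \leq u_1 < \beta_1$ are exactly what the hypotheses of the corollary specialize to when $N = 1$, so the single $\epsilon$-term produced matches $\epsilon(\sigma_1, t_1, u_1, \beta_0, \beta_1)$.

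For the inductive step, suppose the formula holds with $N$ replaced by $N-1$. Applying this inductive hypothesis to the sequences $\alpha_0 > \cdots > \alpha_{N-1}$, $\beta_0 > \cdots > \beta_{N-1}$ and indices $u_0, \ldots, u_{N-1}$, $t_1, \ldots, t_{N-1}$ rewrites the left-hand side as
\[
\frac{(u_{N-1}+1)(\beta_{N-1}-1)}{2\beta_{N-1}} - \sum_{j=0}^{u_{N-1}} \left\{ \frac{\alpha_{N-1} j}{\beta_{N-1}} \right\} + \sum_{i=1}^{N-1} \epsilon(\sigma_i, t_i, u_i, \beta_{i-1}, \beta_i).
\]
Then I apply Proposition \ref{prop:specific-cor-last-two-terms-bd-iter} once more to the first two terms, with the parameters $(\alpha_0, \beta_0, u_0)$ of the proposition playing the role of $(\alpha_{N-1}, \beta_{N-1}, u_{N-1})$ here, and $(\alpha_1, \beta_1, u_1)$ playing the role of $(\alpha_N, \beta_N, u_N)$. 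The required hypotheses ($\alpha_{N-1} > \alpha_N$, $\beta_{N-1} > \beta_N$, $\alpha_N \beta_{N-1} - \beta_N \alpha_{N-1} = \sigma_N = \pm 1$, and $u_{N-1} = \beta_N t_N + u_N$ with $0 \leq u_N < \beta_N$) are all part of the standing assumptions, so the substitution is legal and produces precisely the missing $\epsilon(\sigma_N, t_N, u_N, \beta_{N-1}, \beta_N)$ term.

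There is no genuine obstacle here — all of the combinatorial content, including the somewhat intricate definition of $\epsilon$ and the auxiliary function $\Delta$, has been absorbed into Proposition \ref{prop:specific-cor-last-two-terms-bd-iter}. The corollary is essentially a telescoping bookkeeping statement saying that one may iterate the single-step reduction as many times as desired, provided one can continue to produce sequences $(\alpha_i, \beta_i)$ satisfying the unimodular Diophantine relation $\alpha_i \beta_{i-1} - \beta_i \alpha_{i-1} = \pm 1$; in the applications to follow, the continued-fraction-like sequences arising from the data of $\mathbb{P}(4, b, c)$ will supply exactly such chains.
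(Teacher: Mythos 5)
Your induction is correct and is exactly the argument the paper intends: the paper gives no separate proof of Corollary \ref{cor:last-two-terms-bd-iter}, simply noting that it follows by applying Proposition \ref{prop:specific-cor-last-two-terms-bd-iter} iteratively, which is precisely your inductive step. All hypotheses needed at each application (the unimodular relation, the monotonicity, and the decomposition $u_{i-1}=\beta_i t_i+u_i$ with $0\leq u_i<\beta_i$) are part of the standing assumptions, so your bookkeeping is complete.
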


We begin with the following preliminary lemmas.

\begin{lemma}
\label{l:sum-of-floors-ceilings-rel-prime}
Let $\alpha,\beta\in\ZZ^+$ be relatively prime. Then
\[
\sum_{k=0}^{\beta-1}\left\lfloor\frac{\alpha}{\beta}k\right\rfloor=\frac{1}{2}(\alpha-1)(\beta-1)\quad\quad\textrm{and}\quad\quad
\sum_{k=0}^{\beta-1}\left\lceil\frac{\alpha}{\beta}k\right\rceil=\frac{1}{2}(\alpha+1)(\beta-1).
\]
\end{lemma}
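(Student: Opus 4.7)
The plan is to reduce everything to computing $\sum_{k=0}^{\beta-1} \{\alpha k/\beta\}$, which is easy because coprimality forces these fractional parts to permute a standard set. Specifically, since $\gcd(\alpha,\beta)=1$, multiplication by $\alpha$ is a bijection on $\mathbb{Z}/\beta\mathbb{Z}$, so the residues $\alpha k \pmod{\beta}$ for $k=0,1,\dots,\beta-1$ run through $\{0,1,\dots,\beta-1\}$ in some order. Dividing by $\beta$, the multiset $\{\{\alpha k/\beta\} : 0\le k\le \beta-1\}$ equals $\{0, 1/\beta, 2/\beta,\dots, (\beta-1)/\beta\}$, hence
\[
\sum_{k=0}^{\beta-1} \left\{\tfrac{\alpha}{\beta}k\right\} = \sum_{j=0}^{\beta-1} \tfrac{j}{\beta} = \tfrac{\beta-1}{2}.
\]

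For the floor identity, I would apply $\lfloor x\rfloor = x - \{x\}$ termwise. This gives
\[
\sum_{k=0}^{\beta-1} \left\lfloor \tfrac{\alpha}{\beta}k\right\rfloor = \tfrac{\alpha}{\beta}\sum_{k=0}^{\beta-1} k - \sum_{k=0}^{\beta-1}\left\{\tfrac{\alpha}{\beta}k\right\} = \tfrac{\alpha}{\beta}\cdot\tfrac{\beta(\beta-1)}{2} - \tfrac{\beta-1}{2} = \tfrac{(\alpha-1)(\beta-1)}{2},
\]
which is the first equality.

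For the ceiling identity, I would use the relation $\lceil x\rceil = \lfloor x\rfloor + 1$ whenever $x\notin \mathbb{Z}$, while $\lceil x\rceil = \lfloor x\rfloor$ when $x\in\mathbb{Z}$. Since $\gcd(\alpha,\beta)=1$, the only $k\in\{0,1,\dots,\beta-1\}$ for which $\alpha k/\beta$ is an integer is $k=0$. Hence the ceiling sum exceeds the floor sum by exactly $\beta-1$, giving
\[
\sum_{k=0}^{\beta-1}\left\lceil\tfrac{\alpha}{\beta}k\right\rceil = \tfrac{(\alpha-1)(\beta-1)}{2} + (\beta-1) = \tfrac{(\alpha+1)(\beta-1)}{2}.
\]
The entire argument is elementary; there is no real obstacle, just the bookkeeping step of identifying the $k=0$ integer value correctly when passing from floors to ceilings.
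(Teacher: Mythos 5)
Your proof is correct, but it takes a different route from the paper for the first identity. You compute $\sum_{k=0}^{\beta-1}\{\frac{\alpha}{\beta}k\}=\frac{\beta-1}{2}$ by observing that multiplication by $\alpha$ permutes the residues modulo $\beta$, and then pass from $\frac{\alpha}{\beta}k$ to its floor termwise; the paper instead interprets $\sum_{k=0}^{\beta}\lfloor\frac{\alpha}{\beta}k\rfloor+\beta+1$ as the number of lattice points in the triangle with vertices $(0,0)$, $(\beta,0)$, $(\beta,\alpha)$ and applies Pick's Theorem, with coprimality entering through the count of boundary points on the hypotenuse. Both arguments are complete; yours is more elementary and self-contained (no geometric input), and in fact your fractional-part identity is exactly the one the paper invokes separately in the proof of Proposition \ref{prop:Ehrhart}, so your route makes that connection explicit, whereas the paper's Pick's-Theorem argument fits naturally with the lattice-point counting used throughout Sections \ref{sec:prelim-reds} and \ref{sec:Ehrhart}. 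Your handling of the ceiling identity (adding $\beta-1$ because $\frac{\alpha}{\beta}k$ is integral only at $k=0$) coincides with the paper's final step.
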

\begin{proof}
Notice that $\sum_{k=0}^{\beta}\lfloor\frac{\alpha}{\beta}k\rfloor+\beta+1$ is the number of lattice points in the triangle with vertices $0$, $(\beta,0)$, and $(\beta,\alpha)$. So, by Pick's Theorem, 
\[
\sum_{k=0}^{\beta}\left\lfloor\frac{\alpha}{\beta}k\right\rfloor+\beta+1=\frac{1}{2}(\alpha\beta+\alpha+\beta+1)+1.
\]
Since $\sum_{k=0}^{\beta}\lfloor\frac{\alpha}{\beta}k\rfloor=\sum_{k=0}^{\beta-1}\lfloor\frac{\alpha}{\beta}k\rfloor + \alpha$, the first result follows. The second result follows from the first and the fact that $\sum_{k=0}^{\beta-1}\lceil\frac{\alpha}{\beta}k\rceil=(\beta-1)+\sum_{k=0}^{\beta-1}\lfloor\frac{\alpha}{\beta}k\rfloor$.
\end{proof}

\begin{lemma}
\label{l:frac-parts-loewr-sum-new}
Suppose that $\alpha_0 > \alpha_1$, $\beta_0 > \beta_1$, and
\[
\alpha_1\beta_0 - \beta_1\alpha_0 = \sigma = \pm 1
\]
where $\alpha_i, \beta_i \in \mathbb{Z}^+$. Then
\begin{enumerate}
\item\label{l:frac-parts-loewr-sum-new::frac-parts-easy}
$\{\frac{\alpha_0j}{\beta_0} \} = \{ \frac{\alpha_1 j}{\beta_1} \} -\sigma \frac{j}{\beta_1 \beta_0}$ for all $0\leq j<\beta_1$, and
\item\label{l:frac-parts-loewr-sum-new::partial-sum}
for any integer $u$ satisfying $0\leq u<\beta_1$, 
\[
\sum_{j=0}^{u}\left\{\frac{\alpha_0j}{\beta_0}\right\}=\sum_{j=0}^{u}\left\{\frac{\alpha_1j}{\beta_1}\right\}-\frac{\sigma}{\beta_0\beta_1}{u+1\choose 2}
\]
\item\label{l:frac-parts-loewr-sum-new::full-sum}
\[
\sum_{j=0}^{\beta_1}\left\{\frac{\alpha_0j}{\beta_0}\right\}=\frac{1-\sigma+(\beta_1+\sigma)(\beta_0-\sigma)}{2\beta_0}.
\]
\end{enumerate}
\end{lemma}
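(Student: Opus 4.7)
The plan is to reduce everything to the single identity $\alpha_0/\beta_0 = \alpha_1/\beta_1 - \sigma/(\beta_0\beta_1)$, which follows directly from the Diophantine hypothesis $\alpha_1\beta_0-\beta_1\alpha_0=\sigma$ by dividing through by $\beta_0\beta_1$. Multiplying by $j$ gives the identity $\alpha_0 j/\beta_0 = \alpha_1 j/\beta_1 - \sigma j/(\beta_0\beta_1)$ as real numbers, so Part \eqref{l:frac-parts-loewr-sum-new::frac-parts-easy} reduces to checking that the right-hand side $\{\alpha_1 j/\beta_1\} - \sigma j/(\beta_0\beta_1)$ lies in $[0,1)$ for $0\leq j<\beta_1$. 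The key input here is that $\gcd(\alpha_1,\beta_1)=1$, which is immediate from the fact that $\gcd(\alpha_1,\beta_1)$ divides $\sigma=\pm1$; consequently, for $0<j<\beta_1$, $\{\alpha_1 j/\beta_1\}$ is a nonzero multiple of $1/\beta_1$, and the perturbation $j/(\beta_0\beta_1)$ satisfies $0 < j/(\beta_0\beta_1) < 1/\beta_0 < 1/\beta_1$ since $j<\beta_1<\beta_0$. A two-line case split on $\sigma=\pm1$ then confirms that the resulting real number is in $[0,1)$, so subtracting $\sigma j/(\beta_0\beta_1)$ (an integer plus a small real) does not change the integer part.

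Part \eqref{l:frac-parts-loewr-sum-new::partial-sum} is then just summing Part \eqref{l:frac-parts-loewr-sum-new::frac-parts-easy} from $j=0$ to $u$, with $\sum_{j=0}^u j=\binom{u+1}{2}$.

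For Part \eqref{l:frac-parts-loewr-sum-new::full-sum}, I plan to split the sum as $\sum_{j=0}^{\beta_1}\{\alpha_0 j/\beta_0\} = \sum_{j=0}^{\beta_1-1}\{\alpha_0 j/\beta_0\} + \{\alpha_0\beta_1/\beta_0\}$. The first piece is handled by Part \eqref{l:frac-parts-loewr-sum-new::partial-sum} with $u=\beta_1-1$, and since $\{\alpha_1 j/\beta_1\}$ traverses a permutation of $\{0,1/\beta_1,\dots,(\beta_1-1)/\beta_1\}$ (again by $\gcd(\alpha_1,\beta_1)=1$), we have $\sum_{j=0}^{\beta_1-1}\{\alpha_1 j/\beta_1\} = (\beta_1-1)/2$. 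For the last term, note that $\alpha_0\beta_1 = \alpha_1\beta_0 - \sigma$ gives $\alpha_0\beta_1/\beta_0 = \alpha_1 - \sigma/\beta_0$, so $\{\alpha_0\beta_1/\beta_0\}$ equals $(\beta_0-1)/\beta_0$ when $\sigma=1$ and $1/\beta_0$ when $\sigma=-1$; both cases can be written uniformly as $(\beta_0-\sigma+\sigma(1-\sigma)/2\cdot\ldots)/\beta_0$, though in practice it is cleaner to verify each sign separately and check it matches the target $(1-\sigma+(\beta_1+\sigma)(\beta_0-\sigma))/(2\beta_0)$.

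The only genuinely delicate step is the verification in Part \eqref{l:frac-parts-loewr-sum-new::frac-parts-easy} that the subtraction of the small quantity $\sigma j/(\beta_0\beta_1)$ does not cross an integer boundary; everything else is mechanical algebra. I would structure the write-up with a short arithmetic calculation establishing $\gcd(\alpha_1,\beta_1)=1$, then prove Part \eqref{l:frac-parts-loewr-sum-new::frac-parts-easy} via the case split above, deduce Part \eqref{l:frac-parts-loewr-sum-new::partial-sum} by summation, and finally close Part \eqref{l:frac-parts-loewr-sum-new::full-sum} by a direct computation, splitting off the top term as indicated.
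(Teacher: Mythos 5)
Your proposal is correct and takes essentially the same route as the paper's proof: part (1) via the identity $\frac{\alpha_0 j}{\beta_0}=\frac{\alpha_1 j}{\beta_1}-\frac{\sigma j}{\beta_0\beta_1}$ together with $\gcd(\alpha_1,\beta_1)=1$ and the bound $\frac{j}{\beta_0\beta_1}<\frac{1}{\beta_0}<\frac{1}{\beta_1}$, part (2) by summation, and part (3) by splitting off the $j=\beta_1$ term, whose fractional part is $\frac{\sigma+1}{2}-\frac{\sigma}{\beta_0}$. The only cosmetic difference is that in part (3) you evaluate $\sum_{j=0}^{\beta_1-1}\{\frac{\alpha_1 j}{\beta_1}\}=\frac{\beta_1-1}{2}$ by the permutation-of-residues argument, whereas the paper rewrites fractional parts via floors and invokes Lemma \ref{l:sum-of-floors-ceilings-rel-prime}; the two computations are equivalent.
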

\begin{proof}
We begin with the proof of (\ref{l:frac-parts-loewr-sum-new::frac-parts-easy}). The case $j=0$ is clear, so we assume $1\leq j\leq\beta_1-1$. Since $\frac{\alpha_0j}{\beta_0}= \frac{\alpha_1 j}{\beta_1} -\sigma \frac{j}{\beta_1 \beta_0}$, it suffices to show $0\leq \{ \frac{\alpha_1 j}{\beta_1} \} -\sigma \frac{j}{\beta_1 \beta_0}<1$. Since $\alpha_1$ and $\beta_1$ are relatively prime, we see $\frac{1}{\beta_1}\leq\{ \frac{\alpha_1 j}{\beta_1} \}\leq1-\frac{1}{\beta_1}$. The result then follows from the fact that $|\sigma \frac{j}{\beta_1 \beta_0}|<\frac{1}{\beta_0}<\frac{1}{\beta_1}$.

Part (\ref{l:frac-parts-loewr-sum-new::partial-sum}) follows from (\ref{l:frac-parts-loewr-sum-new::frac-parts-easy}) by noting
\[
\sum_{j=0}^{u} \left\{\frac{\alpha_0j}{\beta_0}\right\} = \sum_{j=0}^{u} \left(\left\{\frac{\alpha_1 j}{\beta_1}\right\}  - \sigma\frac{j}{\beta_0 \beta_1}\right) = \sum_{j=0}^{u}\left\{\frac{\alpha_1j}{\beta_1}\right\}-\frac{\sigma}{\beta_0\beta_1}{u+1\choose 2}.
\]

To prove (\ref{l:frac-parts-loewr-sum-new::full-sum}), we use $\sum_{j=0}^{\beta_1} \{\frac{\alpha_0j}{\beta_0}\} = \frac{\sigma+1}{2}-\frac{\sigma}{\beta_0}+\sum_{j=0}^{\beta_1-1} \{\frac{\alpha_0j}{\beta_0}\}$ and part (\ref{l:frac-parts-loewr-sum-new::partial-sum}) to see

\begin{align*}
\sum_{j=0}^{\beta_1} \left\{\frac{\alpha_0j}{\beta_0}\right\} &=\frac{\sigma+1}{2}-\frac{\sigma}{\beta_0}+\sum_{j=0}^{\beta_1-1} \left(\frac{\alpha_1 j}{\beta_1} - \left\lfloor\frac{\alpha_1 j}{\beta_1}\right\rfloor\right)-\frac{\sigma}{\beta_0\beta_1}{\beta_1\choose 2}\\
&=\frac{\sigma+1}{2}-\frac{\sigma}{\beta_0}+\frac{\alpha_1}{\beta_1}{\beta_1\choose 2}-\frac{1}{2}(\alpha_1-1)(\beta_1-1)-\sigma\frac{1}{\beta_0 \beta_1}{\beta_1\choose 2},
\end{align*}
where the second equality uses Lemma \ref{l:sum-of-floors-ceilings-rel-prime}. The result follows by algebraic manipulation.
\end{proof}

The following result is the first step in proving Proposition \ref{prop:specific-cor-last-two-terms-bd-iter}.

\begin{corollary}
\label{cor:specific-cor-last-two-terms-bd-iter-k-error}
With hypotheses as in Proposition \ref{prop:specific-cor-last-two-terms-bd-iter}, we have
\[
\frac{(u_1+1)(\beta_0-1)}{2\beta_0} - \sum_{j=0}^{u_1} \left\{ \frac{\alpha_0 j}{\beta_0} \right\}= \frac{(u_1+1)(\beta_1-1)}{2\beta_1} - \sum_{j=0}^{u_1} \left\{ \frac{\alpha_1 j}{\beta_1} \right\} +\epsilon'(\sigma,t_1,u_1,\beta_0,\beta_1),
\]
where
\[
\epsilon'(\sigma,t,u,\beta',\beta)=\frac{(u+1)(\sigma u+\beta'-\beta)}{2\beta'\beta}.
\]
\end{corollary}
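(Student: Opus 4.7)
The plan is to reduce this directly to Lemma \ref{l:frac-parts-loewr-sum-new}(\ref{l:frac-parts-loewr-sum-new::partial-sum}), which expresses the partial sum $\sum_{j=0}^{u_1}\{\alpha_0 j/\beta_0\}$ in terms of $\sum_{j=0}^{u_1}\{\alpha_1 j/\beta_1\}$ whenever $0 \leq u_1 < \beta_1$, exactly the hypothesis we have here. Note that $t_1$ does not actually appear in the formula for $\epsilon'$; it is included in the argument list only to be compatible with the notation of Proposition \ref{prop:specific-cor-last-two-terms-bd-iter}, which is where $t_1$ genuinely enters.

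First, I would rearrange the desired identity so that what must be verified is
\[
\frac{(u_1+1)(\beta_0-1)}{2\beta_0} - \frac{(u_1+1)(\beta_1-1)}{2\beta_1} + \sum_{j=0}^{u_1}\left\{\frac{\alpha_1 j}{\beta_1}\right\} - \sum_{j=0}^{u_1}\left\{\frac{\alpha_0 j}{\beta_0}\right\} = \frac{(u_1+1)(\sigma u_1 + \beta_0 - \beta_1)}{2\beta_0\beta_1}.
\]
Lemma \ref{l:frac-parts-loewr-sum-new}(\ref{l:frac-parts-loewr-sum-new::partial-sum}) gives the contribution of the sums:
\[
\sum_{j=0}^{u_1}\left\{\frac{\alpha_1 j}{\beta_1}\right\} - \sum_{j=0}^{u_1}\left\{\frac{\alpha_0 j}{\beta_0}\right\} = \frac{\sigma}{\beta_0\beta_1}\binom{u_1+1}{2} = \frac{\sigma u_1(u_1+1)}{2\beta_0\beta_1}.
\]
A direct calculation handles the remaining difference:
\[
\frac{(u_1+1)(\beta_0-1)}{2\beta_0} - \frac{(u_1+1)(\beta_1-1)}{2\beta_1} = \frac{u_1+1}{2}\cdot\frac{\beta_1(\beta_0-1) - \beta_0(\beta_1-1)}{\beta_0\beta_1} = \frac{(u_1+1)(\beta_0-\beta_1)}{2\beta_0\beta_1}.
\]
Summing the two contributions recovers $\epsilon'(\sigma,t_1,u_1,\beta_0,\beta_1)$ exactly.

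There is no real obstacle here: once Lemma \ref{l:frac-parts-loewr-sum-new}(\ref{l:frac-parts-loewr-sum-new::partial-sum}) is available, the entire statement is a routine algebraic manipulation. The only mild subtlety is that one must verify that the hypothesis $0 \leq u_1 < \beta_1$ in Lemma \ref{l:frac-parts-loewr-sum-new}(\ref{l:frac-parts-loewr-sum-new::partial-sum}) is automatically satisfied, which follows immediately from the division-with-remainder relation $u_0 = \beta_1 t_1 + u_1$ with $0 \leq u_1 < \beta_1$ in the corollary's hypotheses.
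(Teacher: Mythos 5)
Your proposal is correct and follows essentially the same route as the paper: both apply Lemma \ref{l:frac-parts-loewr-sum-new}(\ref{l:frac-parts-loewr-sum-new::partial-sum}) to replace $\sum_{j=0}^{u_1}\{\alpha_0 j/\beta_0\}$ with $\sum_{j=0}^{u_1}\{\alpha_1 j/\beta_1\}$ up to the correction $\frac{\sigma}{\beta_0\beta_1}\binom{u_1+1}{2}$, and then finish by the routine algebraic identity identifying the leftover terms with $\epsilon'(\sigma,t_1,u_1,\beta_0,\beta_1)$. Your explicit check that $0\leq u_1<\beta_1$ (so the lemma applies) is a nice touch the paper leaves implicit.
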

\begin{proof}
By Lemma \ref{l:frac-parts-loewr-sum-new} (\ref{l:frac-parts-loewr-sum-new::partial-sum}),
\[
\frac{(u_1+1)(\beta_0-1)}{2\beta_0} - \sum_{j=0}^{u_1} \left\{ \frac{\alpha_0 j}{\beta_0} \right\}=\frac{(u_1+1)(\beta_0-1)}{2\beta_0} - \sum_{j=0}^{u_1} \left\{ \frac{\alpha_1 j}{\beta_1} \right\}+\frac{\sigma}{\beta_0\beta_1}{u_1+1\choose 2}.
\]
Since
\[
\frac{(u_1+1)(\beta_0-1)}{2\beta_0} + \frac{\sigma}{\beta_0\beta_1}{u_1+1\choose 2}=\frac{(u_1+1)(\beta_1-1)}{2\beta_1} +\epsilon'(\sigma,t_1,u_1,\beta_0,\beta_1),
\]
the result follows.
\end{proof}

The next step in proving Proposition \ref{prop:specific-cor-last-two-terms-bd-iter} is provided by

\begin{corollary}
\label{cor:specific-cor-last-two-terms-bd-iter-h-error}
With hypotheses as in Proposition \ref{prop:specific-cor-last-two-terms-bd-iter}, we have
\[
\frac{(u_0+1)(\beta_0-1)}{2\beta_0} - \sum_{j=0}^{u_0}\left\{ \frac{\alpha_0}{\beta_0}j \right\}= \frac{(u_1+1)(\beta_0-1)}{2\beta_0} - \sum_{j=0}^{u_1} \left\{ \frac{\alpha_0 j}{\beta_0} \right\} +\epsilon''(\sigma,t_1,u_1,\beta_0,\beta_1),
\]
where
\[
\epsilon''(\sigma,t,u,\beta',\beta)=\frac{\sigma t(\beta (t-\sigma) + 2u+1-\beta')}{2\beta'}+\Delta(\sigma,t,u,\beta',\beta)
\]
and $\Delta$ is as in Proposition \ref{prop:specific-cor-last-two-terms-bd-iter}.
\end{corollary}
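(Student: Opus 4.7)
The desired identity is equivalent to evaluating
$$\epsilon''(\sigma,t_1,u_1,\beta_0,\beta_1) = \frac{\beta_1 t_1(\beta_0-1)}{2\beta_0} - \sum_{j=u_1+1}^{u_0}\left\{\frac{\alpha_0 j}{\beta_0}\right\}$$
and matching it against the displayed formula. The plan is to use $\{x\} = x - \lfloor x\rfloor$ to reduce the computation to evaluating the floor sum $F := \sum_{j=u_1+1}^{u_0}\lfloor \alpha_0 j/\beta_0\rfloor$ in closed form, and then to substitute back and check algebraically.

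To evaluate $F$, I would exploit the Bezout-type relation $\alpha_1\beta_0 - \beta_1\alpha_0 = \sigma$ by rewriting $\alpha_0 j/\beta_0 = \alpha_1 j/\beta_1 - \sigma j/(\beta_0\beta_1)$. Since $j \leq u_0 < \beta_0$, the perturbation $\sigma j/(\beta_0\beta_1)$ lies strictly in $(-1/\beta_1,\,1/\beta_1)$. For $\sigma = 1$, the quantity $\lfloor \alpha_1 j/\beta_1 - j/(\beta_0\beta_1)\rfloor$ equals $\lfloor \alpha_1 j/\beta_1\rfloor$ unless $\{\alpha_1 j/\beta_1\} \leq j/(\beta_0\beta_1) < 1/\beta_1$; since the nonzero values of $\{\alpha_1 j/\beta_1\}$ are all at least $1/\beta_1$, this occurs precisely when $\beta_1 \mid j$, in which case the floor drops by $1$. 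For $\sigma = -1$, the floor would instead increase by $1$ only if $\{\alpha_1 j/\beta_1\} \geq 1 - j/(\beta_0\beta_1) > (\beta_1-1)/\beta_1$, which is impossible because $\{\alpha_1 j/\beta_1\} \leq (\beta_1-1)/\beta_1$ and the bound $j/(\beta_0\beta_1) < 1/\beta_1$ is strict.

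The multiples of $\beta_1$ in $[u_1+1,\,u_0]$ are exactly $\beta_1,\,2\beta_1,\,\ldots,\,t_1\beta_1$, contributing $t_1$ values. Hence $F = G - \tfrac{1+\sigma}{2}\,t_1$, where $G := \sum_{j=u_1+1}^{u_0}\lfloor\alpha_1 j/\beta_1\rfloor$. To evaluate $G$, I would partition $[u_1+1,u_0]$ into residue blocks $\{\beta_1 k,\,\ldots,\,\beta_1(k+1)-1\}$, use $\lfloor\alpha_1(\beta_1 k + m)/\beta_1\rfloor = \alpha_1 k + \lfloor\alpha_1 m/\beta_1\rfloor$, and apply Lemma \ref{l:sum-of-floors-ceilings-rel-prime} to evaluate $\sum_{m=0}^{\beta_1-1}\lfloor\alpha_1 m/\beta_1\rfloor = (\alpha_1-1)(\beta_1-1)/2$. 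A direct calculation then yields $G = \tfrac{\alpha_1 t_1(\beta_1 t_1 + 2u_1 + 1) - t_1(\beta_1-1)}{2}$, and therefore $F = \tfrac{\alpha_1 t_1(\beta_1 t_1 + 2u_1 + 1) - t_1(\beta_1+\sigma)}{2}$.

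Finally, I would substitute $\sum_{j=u_1+1}^{u_0}\{\alpha_0 j/\beta_0\} = \tfrac{\alpha_0 \beta_1 t_1(\beta_1 t_1+2u_1+1)}{2\beta_0} - F$ into the expression for $\epsilon''$, expand $\alpha_0 = (\alpha_1\beta_0-\sigma)/\beta_1$, and simplify. The $\alpha_1$-dependent terms cancel, and routine algebra yields $\epsilon'' = \tfrac{\sigma t_1(\beta_1(t_1-\sigma) + 2u_1 + 1 - \beta_0)}{2\beta_0}$, matching the stated formula with the boundary indicator $\Delta$ vanishing throughout the stated range $u_0 < \beta_0$. The main obstacle is the careful case analysis of when the floor changes under the small perturbation $-\sigma j/(\beta_0\beta_1)$, along with the bookkeeping required to isolate the multiples-of-$\beta_1$ correction $\tfrac{1+\sigma}{2}\,t_1$; once this is done, the remaining algebraic manipulation in the final step is mechanical.
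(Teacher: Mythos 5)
Your route is genuinely different from the paper's: the paper proves the identity via the shift claim $\{\alpha_0(j+\beta_1)/\beta_0\}=\{\alpha_0 j/\beta_0\}-\sigma/\beta_0-\Delta'(\sigma,j)$ together with the full-period sum of Lemma \ref{l:frac-parts-loewr-sum-new}(\ref{l:frac-parts-loewr-sum-new::full-sum}), whereas you compare $\lfloor\alpha_0 j/\beta_0\rfloor$ with $\lfloor\alpha_1 j/\beta_1\rfloor$ directly and evaluate the resulting floor sum blockwise via Lemma \ref{l:sum-of-floors-ceilings-rel-prime}. Your bookkeeping of when the floor changes is correct for both signs of $\sigma$ (drop by $1$ exactly at the $t_1$ multiples of $\beta_1$ when $\sigma=1$, no change when $\sigma=-1$), your closed forms for $G$ and $F$ check out, and the final algebra does land on $\frac{\sigma t_1(\beta_1(t_1-\sigma)+2u_1+1-\beta_0)}{2\beta_0}$.

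The genuine gap is your last sentence. You dispose of $\Delta$ by asserting it ``vanishes throughout the stated range $u_0<\beta_0$,'' but that is not what the paper's definition says: by Proposition \ref{prop:specific-cor-last-two-terms-bd-iter}, $\Delta(\sigma,t_1,u_1,\beta_0,\beta_1)=1$ whenever $\sigma=-1$ and $\beta_0-\beta_1\leq\beta_1t_1+u_1=u_0$, which is perfectly compatible with $u_0<\beta_0$. In exactly those cases your identity and the stated one differ by $1$, so as written your argument does not prove the corollary as stated; it contradicts it there, and this must be confronted rather than asserted away. Concretely, for $(\alpha_0,\beta_0,\alpha_1,\beta_1)=(3,5,1,2)$, $\sigma=-1$, $u_0=4$, $t_1=2$, $u_1=0$, the left side equals $0$ and the $u_1$-term equals $\frac{2}{5}$, so $\epsilon''=-\frac{2}{5}$, which is your $\Delta$-free value, while the stated formula with $\Delta=1$ gives $\frac{3}{5}$. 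So the discrepancy is real: the paper's own proof produces $\Delta$ from the wrap-around of its shift identity at $j=\beta_0-\beta_1$, a contribution your analysis never sees because all indices stay at most $u_0<\beta_0$; since $\Delta\geq0$ and only upper bounds on $\epsilon$ are used later, the extra term is harmless for the paper's subsequent estimates, but it does change the exact identity. A complete write-up of your proof must state explicitly that it establishes the formula with $\Delta=0$, and must address (or correct) the $\Delta$ term in the displayed statement, rather than claiming the two coincide.
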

\begin{proof}
We first claim that if $j\in\ZZ^+$ and $1\leq j<\beta_0$, then 
\begin{equation}\label{eqn:subtractbeta1}
\left\{\frac{\alpha_0(j+\beta_1)}{\beta_0}\right\}=\left\{\frac{\alpha_0j}{\beta_0}\right\}-\frac{\sigma}{\beta_0} - \Delta'(\sigma,j)
\end{equation}
where $\Delta'(\sigma,j)=1$ if $j=\beta_0-\beta_1$ and $\sigma=-1$, and $\Delta'(\sigma,j)=0$ otherwise. Since $\alpha_0\beta_1 \equiv -\sigma$ mod $\beta_0$, to prove our claim, it is enough to show the righthand side of (\ref{eqn:subtractbeta1}) lies in the interval $[0,1)$. Note that $\{\frac{\alpha_0j}{\beta_0}\}-\frac{\sigma}{\beta_0}\in[0,1)$ unless either (i) $\sigma=1$ and $\{\frac{\alpha_0j}{\beta_0}\}=0$, or (ii) $\sigma=-1$ and $\{\frac{\alpha_0j}{\beta_0}\}=\frac{\beta_0-1}{\beta_0}$. Case (i) never occurs since $\gcd(\alpha_0,\beta_0)=1$ and $1\leq j<\beta_0$, so $\alpha_0j$ is not divisible by $\beta_0$. Case (ii) occurs exactly when $\alpha_0 j\equiv -1$ mod $\beta_0$, i.e.~when $j=\beta_0-\beta_1$. This establishes our claim.

Recalling that $u_0=\beta_1 t_1+u_1$, we see from equation (\ref{eqn:subtractbeta1}) and Lemma \ref{l:frac-parts-loewr-sum-new} (\ref{l:frac-parts-loewr-sum-new::full-sum}) that
\begin{align*}
\sum_{j=0}^{u_0}\left\{\frac{\alpha_0j}{\beta_0}\right\} &=\sum_{j=\beta_1 t_1+1}^{u_0}\left\{\frac{\alpha_0j}{\beta_0}\right\}+\sum_{j=1}^{\beta_1 t_1}\left\{\frac{\alpha_0j}{\beta_0}\right\}\\
&=\sum_{j=1}^{u_1}\left\{\frac{\alpha_0j}{\beta_0}\right\}-\sigma\frac{u_1t_1}{\beta_0}+t_1\sum_{j=1}^{\beta_1 }\left\{\frac{\alpha_0j}{\beta_0}\right\}-\sigma\frac{\beta_1 }{\beta_0}{t_1\choose2}-\Delta(\sigma,t_1,u_1,\beta_0,\beta_1)\\
&= \sum_{j=0}^{u_1}\left\{\frac{\alpha_0j}{\beta_0}\right\}-\sigma\frac{u_1t_1}{\beta_0}+t_1\frac{1-\sigma+(\beta_1+\sigma)(\beta_0-\sigma)}{2\beta_0}-\sigma\frac{\beta_1 }{\beta_0}{t_1\choose2}-\Delta(\sigma,t_1,u_1,\beta_0,\beta_1)\\
&=\sum_{j=0}^{u_1}\left\{\frac{\alpha_0j}{\beta_0}\right\}+\frac{(u_0+1)(\beta_0-1)}{2\beta_0} - \frac{(u_1+1)(\beta_0-1)}{2\beta_0} - \epsilon''(\sigma,t_1,u_1,\beta_0,\beta_1).\qedhere
\end{align*}
\end{proof}

We now turn to the main result of this subsection.

\begin{proof}[{Proof of Proposition \ref{prop:specific-cor-last-two-terms-bd-iter}}]
Noting that $\epsilon(\sigma,t_1,u_1,\beta_0,\beta_1)=\epsilon'(\sigma,t_1,u_1,\beta_0,\beta_1)+\epsilon''(\sigma,t_1,u_1,\beta_0,\beta_1)$, we see
\begin{align*}
\frac{(u_0+1)(\beta_0-1)}{2\beta_0} - \sum_{j=0}^{u_0}\left\{ \frac{\alpha_0}{\beta_0}j \right\} &= \frac{(u_1+1)(\beta_0-1)}{2\beta_0} - \sum_{j=0}^{u_1} \left\{ \frac{\alpha_0 j}{\beta_0} \right\} +\epsilon''(\sigma,t_1,u_1,\beta_0,\beta_1)\\
&= \frac{(u_1+1)(\beta_1-1)}{2\beta_1} - \sum_{j=0}^{u_1} \left\{ \frac{\alpha_1 j}{\beta_1} \right\} +\epsilon(\sigma,t_1,u_1,\beta_0,\beta_1)
\end{align*}
where the first equality is by Corollary \ref{cor:specific-cor-last-two-terms-bd-iter-h-error} and the second equality is by Corollary \ref{cor:specific-cor-last-two-terms-bd-iter-k-error}.
\end{proof}

We end the subsection by giving a bound on $\epsilon$.

\begin{lemma}
\label{l:bounds-on-epp-and-eppp}
For $t,u,\beta',\beta\in\ZZ$ such that $0\leq u<\beta<\beta'$ and $0\leq t\leq\lfloor\frac{\beta'-1-u}{\beta }\rfloor$, we have
\[
\frac{-(\beta'+\beta-1)^2+4(\beta'-\beta)}{8\beta'\beta}\leq\epsilon(1,t,u,\beta',\beta)\leq\frac{\beta'-1}{2\beta'}
\]
and
\[
\frac{\beta'-\beta}{2\beta'\beta}\leq\epsilon(-1,t,u,\beta',\beta)\leq
\begin{cases}
\frac{1}{8\beta\beta'}(\beta'-\beta+3)(\beta'-\beta-1), &\text{if } u+\beta t<\beta'-\beta\\
1, &\text{otherwise.}
\end{cases}
\]
Furthermore, letting $v=u+\beta t$,
\[
\epsilon(-1,t,u,\beta',\beta)=-\frac{1}{2\beta'\beta}(v+1)(v+\beta-\beta')+\Delta
\]
and 
\[
\epsilon(1,t,u,\beta',\beta)=\frac{1}{2\beta'\beta}(v+1)(v-\beta'-\beta)+\frac{1}{\beta}(u+1).
\]
As functions of $v$, the former (resp.~latter) is increasing (resp.~decreasing) if and only if $v\leq\frac{1}{2}(\sigma\beta+\beta'-1)$, where $u$ is viewed as a constant in the latter.
\end{lemma}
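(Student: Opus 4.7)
The plan has three phases: derive closed forms for $\epsilon$ in terms of $v := u+\beta t$, differentiate to get the monotonicity statements, and then extract the numerical bounds by quadratic optimization.

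For the closed forms, substitute $u=v-\beta t$ directly into the definition of $\epsilon$ from Proposition~\ref{prop:specific-cor-last-two-terms-bd-iter} and expand. For $\sigma=-1$, after collecting terms one checks that all pure $t$-contributions cancel, leaving exactly $-(v+1)(v-(\beta'-\beta))/(2\beta'\beta)$; the indicator $\Delta$ is unchanged since the condition $\beta'-\beta\leq\beta t+u$ reads $v\geq\beta'-\beta$. For $\sigma=1$, the same expansion does \emph{not} reduce to a function of $v$ alone—a residual $(u+1)/\beta$ remains after absorbing a constant into the quadratic part—yielding $\frac{1}{2\beta'\beta}(v+1)(v-\beta'-\beta)+\frac{u+1}{\beta}$.

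Given the closed forms, the monotonicity claim is immediate: both expressions are quadratic in $v$ with unique critical point $v=\tfrac12(\sigma\beta+\beta'-1)$, and the sign $\sigma/(2\beta'\beta)$ of the leading coefficient decides whether the parabola opens down (so $\epsilon$ increases then decreases, matching $\sigma=-1$) or up (so $\epsilon$ decreases then increases, matching $\sigma=1$).

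For the bounds I split on $\sigma$. When $\sigma=-1$, the function $h(v):=-(v+1)(v-(\beta'-\beta))/(2\beta'\beta)$ is an inverted parabola with integer roots at $v=-1$ and $v=\beta'-\beta$. On the range $v<\beta'-\beta$ (so $\Delta=0$), $h>0$ with minimum $\frac{\beta'-\beta}{2\beta'\beta}$ attained at the endpoints $v=0$ and $v=\beta'-\beta-1$, and the maximum controlled by evaluating at integer points near the apex $v=\tfrac12(\beta'-\beta-1)$. On $v\geq\beta'-\beta$ (so $\Delta=1$), $h\leq 0$ so $\epsilon\leq 1$ with equality at $v=\beta'-\beta$. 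When $\sigma=1$, write $\epsilon=f(v)+(u+1)/\beta$ with $f$ an upward parabola of vertex $v=\tfrac{\beta'+\beta-1}{2}$; the lower bound follows by minimizing $f$ over $v\in\mathbb{R}$ and $(u+1)/\beta$ over $u\geq0$ independently and summing. For the upper bound, partition $v\in\{0,\dots,\beta'-1\}$ into intervals $[k\beta,(k+1)\beta-1]$ on which $t=k$ is constant, check that $\epsilon$ is increasing in $u$ on each piece, and compute the values at the right endpoints $(v,u)=((k+1)\beta-1,\beta-1)$ in closed form; they are maximized over $k\geq 0$ at $k=0$, giving $\frac{\beta'-1}{2\beta'}$.

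The main obstacle is the bookkeeping in Phase 1: for $\sigma=-1$ one must verify that every pure $t$-term in the original two-piece expression for $\epsilon$ cancels, a nontrivial identity that is invisible until after substitution. Once the closed forms are in hand, Phases~2 and~3 reduce to routine quadratic analysis on integer intervals.
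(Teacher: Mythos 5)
Your overall route is the same as the paper's: pass to $v=u+\beta t$, observe that $2\beta'\beta\,\epsilon$ becomes the quadratic $\sigma v^2+(\sigma(1-\beta')-\beta)v+\beta'(1+\sigma)u+\beta'-\beta+2\beta'\beta\Delta$ (your closed forms for $\sigma=\pm1$ agree with this), read off the monotonicity from the vertex $\tfrac12(\sigma\beta+\beta'-1)$, and get the numerical bounds by optimizing the resulting quadratics over the admissible range $0\le v\le\beta'-1$. Your treatment of the $\sigma=1$ upper bound (freeze $t=k$, note $\epsilon$ is increasing in $u$ on each block, evaluate at $(v,u)=((k+1)\beta-1,\beta-1)$ and maximize over $k$) differs mildly from the paper, which first shows the maximum in $v$ occurs at the left endpoint $v=u$ (i.e.\ $t=0$) and then maximizes $(u+1)(u+\beta'-\beta)$ over $u\le\beta-1$; both give $\frac{\beta'-1}{2\beta'}$, and your $\sigma=1$ lower bound (minimize the parabola over $\RR$ and $(u+1)/\beta$ separately) is exactly the paper's computation.

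There is, however, a real problem at the $\sigma=-1$, $v<\beta'-\beta$ upper bound, and it is precisely where your sketch is vaguest. You correctly locate the apex of $h(v)=-\frac{(v+1)(v-(\beta'-\beta))}{2\beta'\beta}$ at $v=\tfrac12(\beta'-\beta-1)$, but evaluating at the integers nearest this apex gives $\frac{(\beta'-\beta+1)^2}{8\beta\beta'}$ when $\beta'-\beta$ is odd (and $\frac{(\beta'-\beta)(\beta'-\beta+2)}{8\beta\beta'}$ when it is even), \emph{not} the stated constant $\frac{(\beta'-\beta+3)(\beta'-\beta-1)}{8\beta\beta'}$, which is strictly smaller. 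So your Phase~3, carried out honestly, does not deliver the inequality as stated; in fact the stated constant is too small: for $(\beta,\beta',u,t)=(3,7,2,0)$ one computes $\epsilon(-1,0,2,7,3)=\tfrac17$, while the claimed bound is $\tfrac18$. (The paper's own proof reaches the stated constant only by placing the vertex at $\tfrac12(\beta'-\beta+1)$ rather than $\tfrac12(\beta'-\beta-1)$, an off-by-one slip; the corrected bound $\frac{(\beta'-\beta+1)^2}{8\beta\beta'}$ is what your method proves, and it differs from the stated one by $\frac{1}{2\beta\beta'}$, which is harmless where the lemma is applied in Section~5, since there $\beta'-\beta$ grows with $k$.) You should therefore either prove and use the corrected constant, or explain why the discrepancy does not matter downstream. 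A second, minor, omission: for $\sigma=-1$ you only discuss the lower bound on the branch $v<\beta'-\beta$; on the branch $v\ge\beta'-\beta$ one still needs $\epsilon=h(v)+1\ge 1-\frac{\beta-1}{2\beta}=\frac{\beta+1}{2\beta}\ge\frac{\beta'-\beta}{2\beta'\beta}$ (concavity plus the endpoint values at $v=\beta'-\beta$ and $v=\beta'-1$), which is easy but should be said.
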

\begin{proof}
Throughout the proof, we treat $\beta$ and $\beta'$ as fixed constants. Letting $v=u+\beta t$, we find
\[
\eta:=2\beta'\beta\epsilon=\sigma v^2+(\sigma(1-\beta')-\beta)v+\beta'(1+\sigma)u+\beta'-\beta+2\beta'\beta\Delta
\]
Then, the expressions $\epsilon(\pm 1,t,u,\beta',\beta)$ are obtained by substituting $\sigma=\pm 1$ into $\eta$. It suffices to bound $\epsilon$ on the larger region $0\leq t\leq\frac{\beta'-1-u}{\beta}$, where our constraints become $0\leq u\leq\beta-1$ and $u\leq v\leq\beta'-1$.

We first consider the case $\sigma=-1$ and bound $\epsilon$ from the above. Recall that $\Delta=0$ if $v<\beta'-\beta$ and $\Delta=1$ otherwise. Then $\eta=-(v+1)(v+\beta-\beta')+2\beta'\beta\Delta$ has a global maximum at $v_{\max}:=\frac{\beta'-\beta+1}{2}$. Since $0\leq v_{\max}\leq\beta'-\beta$, we see that if $v<\beta'-\beta$, then $\eta(u,v)\leq\eta(0,v_{\max})=\frac{1}{4}(\beta'-\beta+3)(\beta'-\beta-1)$. If, on the other hand, $\beta'-\beta\leq v$, then $\eta(u,v)\leq\eta(0,\beta'-\beta)=2\beta'\beta$. The lower bound of $\epsilon$ is then obtained by calculating $\epsilon$ when $v=0,\beta'-1$ and taking the minimum of the two.

We next consider the case $\sigma=1$ . For fixed $u$, the function $\eta(u,v)$ has a global minimum at $v_{\min}:=\frac{\beta+\beta'-1}{2}$. Since $u\leq\beta-1<v_{\min}\leq\beta'-1$ and $|v_{\min}-(\beta'-1)|<|v_{\min}-(\beta-1)|$, we see $\eta(u,v)\leq\eta(u,u)$. As $\eta(u,u)$ is a quadratic in $u$ with global minimum at $\frac{\beta-\beta'-1}{2}<0$, we find $\eta(u,u)\leq\eta(\beta-1,\beta-1)=\beta(\beta'-1)$. This gives the upper bound on $\epsilon$ when $\sigma=1$, and the lower bound is obtained by substituting $v=v_{\min}=\frac{\beta+\beta'-1}{2}$ and $u=0$ into the expression $\epsilon(1,t,u,\beta',\beta)$.

The final statement concerning where $\epsilon(\sigma,t,u,\beta',\beta)$ is increasing is clear from the expression of $\eta$.
\end{proof}

\section{Proof of Theorem \ref{c:P4bc-conjs}}
\label{sec:proof-of-thm}

We prove Theorem \ref{c:P4bc-conjs} using the procedure outlined in Remark \ref{rmk:red-to-bds-seqs}. Throughout this section, we fix the following notation. Let $I=(\frac{\beta^{(1)}}{\alpha^{(1)}},\frac{\beta^{(2)}}{\alpha^{(2)}})$ be one of the four types of intervals listed in Theorem \ref{c:P4bc-conjs} and let $D_0 \sim n_0\delta D_x$ be as listed in  Theorem \ref{c:P4bc-conjs}, with $\delta \in \{b, c\}$ and $\nu_0 = \nu(D_0)$. For example, if $\beta^{(1)}=16k^2$ and $\alpha^{(1)}=8k^2-4k-1$, then $D_0 \sim (2k+1)bD_x$ and $\nu_0=4k$. Throughout this section, for $\delta'\in\{b,c\}$, we let $|n\delta' D_x|:=|P_{n\delta' D_x}\cap\ZZ^2|$. To prove Theorem \ref{c:P4bc-conjs}, we must show
\begin{equation}
\label{eqn:main-ineq-bD1}
|n\delta' D_x|<{\lceil\frac{\delta'}{\delta}\frac{\nu_0}{n_0}n \rceil+1\choose 2}+1
\end{equation}
for each $n$ whenever $\delta' \neq \delta$, and for each $n$ not a multiple of $n_0$ whenever $\delta' = \delta$. Moreover, for each $n$ a multiple of $n_0$ with $\delta'=\delta$, we need to show
\begin{equation}
\label{eqn:main-ineq-bD1-v2}
|n\delta D_x|<{\frac{\nu_0}{n_0}n+2\choose 2}
\end{equation}
Note that inequality (\ref{eqn:main-ineq-bD1-v2}) implies that $\nu(D_0) \leq \nu_0$ for each $D_0$ as listed in Theorem \ref{c:P4bc-conjs}, by Definition \ref{def:nu-and-gamma-exp}. Combining this with the result $\nu(D_0) \geq \nu_0$ established by Proposition \ref{prop:nu-of-winning-poly}, we may prove the claim $\nu(D_0) = \nu_0$ in Theorem \ref{c:P4bc-conjs}.

By Remark \ref{rmk:red-to-bds-seqs}, it suffices to prove (\ref{eqn:main-ineq-bD1}) and (\ref{eqn:main-ineq-bD1-v2}) for weighted projective surfaces satisfying either $\alpha^{(1)}b+\beta^{(1)}p=1$ or $\alpha^{(2)}b+\beta^{(2)}p=-1$ for $\frac{b}{-p}\in I$. It also suffices to consider $b$ (thus $-p$ and $c$) sufficiently large. We begin with the proof of (\ref{eqn:main-ineq-bD1}), the more challenging of the above two equations:

\begin{theorem}
\label{thm:n0-doesnt-divide-n}
Inequality (\ref{eqn:main-ineq-bD1}) holds for each $n$ whenever $\delta'\neq\delta$, and for each $n$ not a multiple of $n_0$ whenever $\delta' = \delta$.
\end{theorem}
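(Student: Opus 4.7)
By Remark \ref{rmk:red-to-bds-seqs} my plan is to verify inequality (\ref{eqn:main-ineq-bD1}) along the eight integer sequences of weighted projective spaces $\mathbb{P}(4,b_i,c_i)$ (one for each of the four sub-intervals of Theorem \ref{c:P4bc-conjs} paired with its two approach directions); the original inequality on $\mathbb{P}(4,b,c)$ then follows from Lemma \ref{l:red-to-bds-seqs}. All eight cases fit a single template, which I describe next.

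Using Proposition \ref{prop:Ehrhart} I write
\[
|n\delta' D_x| = c_2(\delta'D_x)\,n^2 + c_1(\delta'D_x)\,n + c_0(\delta'D_x,n)
\]
and expand the target as $\binom{\lceil Mn\rceil+1}{2}+1 \geq \tfrac{1}{2}M^2 n^2 + \tfrac{3}{2}Mn + 1$ with $M := \tfrac{\delta'}{\delta}\tfrac{\nu_0}{n_0}$. A direct computation substituting $c=4p+3b$ into $s=b/c$ shows that throughout each sub-interval one has $c_2(\delta'D_x) \leq \tfrac{1}{2}M^2$, with equality exactly at the endpoint of the interval associated to $(\delta, n_0, \nu_0)$. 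Since the sequences of Remark \ref{rmk:red-to-bds-seqs} approach that endpoint from inside the interval, the quadratic gap $\tfrac{1}{2}M^2 - c_2$ is strictly positive along the sequence but shrinks at rate $\Theta(1/b_i)$. A parallel algebraic check using the explicit formulas for $c_1(bD_x)$ and $c_1(cD_x)$ gives $c_1(\delta'D_x) \leq \tfrac{3}{2}M + O(1/b_i)$.

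The heart of the argument is the uniform bound on the constant term $c_0$. I apply Corollary \ref{l:1st-through-4th-terms} (together with the corresponding statement for $cD_x$, extracted from Proposition \ref{prop:Ehrhart} in the same way) to reduce the problem to estimating the expression of Lemma \ref{l:reduce-to-bounding-rhs}, and then run Corollary \ref{cor:last-two-terms-bd-iter} starting from $(\alpha_0,\beta_0)=(-p_i,b_i)$ with first Euclidean step $(\alpha_1,\beta_1)=(\alpha^{(j)},\beta^{(j)})$ as forced by the Diophantine identity $\alpha^{(j)}b_i - \beta^{(j)}(-p_i) = \pm 1$. The subsequent $(\alpha_\ell,\beta_\ell)_{\ell\ge 2}$ arise from the fixed Euclidean reduction of $(\alpha^{(j)},\beta^{(j)})$ and are independent of $i$, so the telescoping sum of error terms, bounded termwise by Lemma \ref{l:bounds-on-epp-and-eppp}, is an $O(1)$ quantity uniform in $i$.

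Combining these three ingredients, (\ref{eqn:main-ineq-bD1}) reduces to
\[
\bigl(\tfrac{1}{2}M^2 - c_2\bigr)n^2 + \tfrac{3}{2}\bigl(\lceil Mn\rceil - Mn\bigr)n + O(1) > \textstyle\sum_\ell \epsilon_\ell,
\]
where the $\epsilon_\ell$ are the corrections from Corollary \ref{cor:last-two-terms-bd-iter}. For the \emph{interior} sub-cases, where $\tfrac{1}{2}M^2 - c_2$ is bounded below by a constant depending only on $k$, the quadratic term wins for $n$ beyond an explicit threshold and finitely many small $n$ are handled by direct substitution. The main obstacle is the \emph{boundary} sub-case: at the endpoint associated to $(\delta,n_0,\nu_0)$ the quadratic surplus degenerates to $\Theta(n^2/b_i)$, and the balance must come from the ceiling jump $\lceil Mn\rceil - Mn$. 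When $\delta'=\delta$, the hypothesis $n_0\nmid n$ together with $\gcd(n_0,\nu_0)=1$ forces $\lceil Mn\rceil - Mn \geq 1/n_0$; when $\delta'\neq\delta$, a similar analysis using the Diophantine identity yields a positive lower bound on $\lceil Mn\rceil - Mn$ for $n$ in the appropriate residue classes, with finitely many problematic $n$ handled separately. Showing that this slack strictly dominates the $\epsilon$-telescope, using the sign and monotonicity information in Lemma \ref{l:bounds-on-epp-and-eppp}, is the delicate final step, and it is here that the eight sub-cases must be examined individually.
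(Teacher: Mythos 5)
Your outline follows the same route as the paper's proof: reduce to the eight sequences via Remark \ref{rmk:red-to-bds-seqs} and Lemma \ref{l:red-to-bds-seqs}, expand $|n\delta' D_x|$ by Proposition \ref{prop:Ehrhart}, bound $c_0$ through Corollary \ref{l:1st-through-4th-terms}, Lemma \ref{l:reduce-to-bounding-rhs} and Corollary \ref{cor:last-two-terms-bd-iter}, and then play the quadratic gap $\tfrac12M^2-c_2$ and the ceiling jump against the $\epsilon$-sum. The problem is that the step you label ``the delicate final step'' and defer to an unspecified case-by-case check is in fact the bulk of the proof, and the estimates you set up cannot carry it. With the termwise bounds of Lemma \ref{l:bounds-on-epp-and-eppp} applied crudely (the $\epsilon_i^+$-type bounds), the $\epsilon$-sum for the chains of Table \ref{table:proof-steps} is indeed $O(1)$ in $b_i$ for fixed $k$, but it grows like $k$: the $\sigma=-1$ step from $\beta_2\approx 8k^2$ down to $\beta_3\approx 8k$ contributes roughly $\beta_2/(8\beta_3)=\Theta(k)$. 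Meanwhile the slack you extract from $\lceil Mn\rceil-Mn\geq 1/n_0$ is only about $Mn/n_0=\Theta(n/k)$, so domination fails for all $n\lesssim k^2$. This regime is not ``finitely many small $n$ handled by direct substitution'': for each such $n$ the inequality must be verified uniformly over the infinitely many $b_i$ in the sequence, and the constant term $c_0$ depends on $n$ and $b_i$ through fractional parts. This is exactly where the paper's proof does its real work: for small $n$ one has $r=\lfloor 4sn\rfloor$ small, hence $t_1=t_2=0$, so the crude bounds are replaced by the refined quadratic-in-$4sn$ bounds $\epsilon_i^{++}$ coming from the final statement of Lemma \ref{l:bounds-on-epp-and-eppp}; the range of $n$ is split ($t>k/16$, $\sqrt k\le n$, $n<\sqrt k$, $n=k+1$); after clearing denominators one checks the sign of the leading coefficient in $b$ of an explicit polynomial; and even then the argument is restricted to $k\ge 11$, with $2\le k\le 10$ checked by hand. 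None of this mechanism appears in your sketch, so the proposal has a genuine gap at its core.

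There are also quantitative slips that matter because the whole argument is a race between terms of comparable size. The claimed bound $\binom{\lceil Mn\rceil+1}{2}+1\ge \tfrac12M^2n^2+\tfrac32Mn+1$ is false: writing $\theta=\lceil Mn\rceil-Mn\in[0,1)$ one gets $\binom{\lceil Mn\rceil+1}{2}=\tfrac12M^2n^2+\bigl(\theta+\tfrac12\bigr)Mn+\tfrac12\theta(\theta+1)$, and the comparison $c_1\le\tfrac32M+O(1/b_i)$ is not the relevant one; what makes the linear term negative is precisely $c_1<\bigl(\theta+\tfrac12\bigr)M$ once $\theta\ge 1/n_0$, since $c_1-\tfrac12M$ is positive in these cases (this is also why the divisibility case needs the separate comparison of Proposition \ref{prop:n0-does-divide-n}). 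Similarly, in the $\delta'\neq\delta$ cases the $n$ with small ceiling jump are not finitely many; the paper handles them by observing that $\lceil\tfrac{\delta'}{\delta}\tfrac{\nu_0}{n_0}n\rceil$ coincides with a rational ceiling for all $n<Cb$ and by the asymptotic dominance of the (there uniformly positive) quadratic deficit for larger $n$.
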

\begin{proof}
First, suppose that $\delta'=\delta$. Thus, we need to consider weighted projective surfaces satisfying $\alpha_1 b + \beta_1 p = \pm 1$ with $\frac{b}{-p} \in I'_{k,\mp}$ or $\frac{b}{-p} \in I''_{k,\mp}$ as listed in Theorem \ref{c:P4bc-conjs}. Notice that $\alpha_1$, $\beta_1$ are the corresponding ones listed in Entries 1 and 2 of Table \ref{table:proof-steps}. For the sake of brevity, we prove the result when $\delta'=\delta=b$, $\beta_1:=\beta^{(1)}=16k^2$ and $\alpha_1:=\alpha^{(1)}=8k^2-4k-1$, over the interval $I'_{k,-}=[\frac{\beta_1}{\alpha_1},\frac{2k-1}{k-1}]$ for $k \geq 2$. The other cases are similar and in fact easier.\footnote{The reason the case considered in this proof is the most difficult is because the upper bounds on $\epsilon$ are weakest when $\sigma=-1$; the case considered here corresponds to Entry 1 of Table \ref{table:proof-steps} which has the most number of $\sigma_i=-1$. In addition, among the entries of the table, Entry 1 has the most number of steps.} By Remark \ref{rmk:red-to-bds-seqs}, it suffices to prove the result for $b$ sufficiently large, where
\[
\alpha_1b-\beta_1(-p)=1.
\]
Let $\alpha_0:=-p$, $\beta_0:=b$, and $$|n\delta' D_x|=c_2n^2+c_1n+c_0$$ where the $c_i$ are given as in Section \ref{sec:Ehrhart}.

We begin by giving an upper bound for $c_0$. Letting $r = b\left\{ \frac{\lfloor 4sn \rfloor}{b} \right\}$, we see from Corollary \ref{l:1st-through-4th-terms} and Lemma \ref{l:reduce-to-bounding-rhs} that
\[
c_0\leq\frac{9}{8}+\frac{1}{32s}+\kappa
\]
where $\kappa$ is an upper bound on $\frac{b-1}{2} \{ \frac{4n}{c} \} - \sum_{j=0}^r\{ \frac{-p}{b}j\}$. To obtain such a bound, we apply Corollary \ref{cor:last-two-terms-bd-iter} with $(\alpha_i,\beta_i,\sigma_i)$ given as in Entry 1 of Table \ref{table:proof-steps} below (the other entries listed in the table are used to address the remaining cases whose proof we omit). Note that for each $i\geq1$, we have $\alpha_i\beta_{i-1}-\beta_i\alpha_{i-1}=\sigma_i$. Therefore, if we let $u_0:=r$ and $u_1,\dots,u_5$ and $t_1,\dots,t_5$ be as in Corollary \ref{cor:last-two-terms-bd-iter}, and
\[
\epsilon_i:=\epsilon(\sigma_i,t_i,u_i,\beta_{i-1},\beta_i),
\]
we have
\[
c_0\leq\frac{9}{8}+\frac{1}{32s}+\kappa'+\sum_{i=1}^5\epsilon_i,
\]
where
\[
\kappa':=\frac{u_5+1}{4} - \sum_{j=0}^{u_5} \left\{ \frac{j}{2} \right\}\leq\frac{1}{4}.
\]

\begin{table}[h!]
\centering
\begin{tabular}{ | c | c | c | c | }
\hline
Entry & $\alpha_i$ & $\beta_i$ & $\sigma_i$ \\ \hline
\multirow{5}{*}{$1$} & $\alpha_1 = 8k^2-4k-1$ & $\beta_1 = 16k^2$ & $\sigma_1 = 1$ \\
 & $\alpha_2 = 4k^2-4k+1$ & $\beta_2 = 8k^2-4k+1$ & $\sigma_2 = 1$ \\
 & $\alpha_3 = 4k-3$ & $\beta_3 = 8k-2$ & $\sigma_3 = -1$ \\
 & $\alpha_4 = k-1$ & $\beta_4 = 2k-1$ & $\sigma_4 = -1$ \\ 
  & $\alpha_5 = 1$ & $\beta_5 = 2$ & $\sigma_5 = 1$ \\ \hline
\multirow{5}{*}{$2$} & $\alpha_1 = 8k^2+4k-1$ & $\beta_1 = 4(2k+1)^2$ & $\sigma_1 = 1$ \\
& $\alpha_2 = 4k^2$ & $\beta_2 = 8k^2+4k+1$ & $\sigma_2 = 1$ \\
 & $\alpha_3 = 4k-1$ & $\beta_3 = 8k+2$ & $\sigma_3 = -1$ \\
 & $\alpha_4 = k$ & $\beta_4 = 2k+1$ & $\sigma_4 = 1$ \\ 
 & $\alpha_5 = 1$ & $\beta_5 = 2$ & $\sigma_5 = 1$ \\ \hline
\multirow{3}{*}{$3$} & $\alpha_1 = 2k-1$ & $\beta_1 = 4k$ & $\sigma_1 = 1$  \\
 & $\alpha_2 = k$ & $\beta_2 = 2k+1$ & $\sigma_2 = 1$ \\ 
  & $\alpha_3 = 1$ & $\beta_5 = 2$ & $\sigma_3 = 1$ \\ \hline
\multirow{2}{*}{$4$} & $\alpha_1 = k$ & $\beta_1 = 2k+1$ & $\sigma_1 = 1$ \\ 
 & $\alpha_2 = 1$ & $\beta_2 = 2$ & $\sigma_2 = 1$ \\ \hline
\end{tabular}
\caption{$\alpha_i$ and $\beta_i$ used to bound $\frac{b-1}{2} \{ \frac{4n}{c} \} - \sum_{j=0}^r\{ \frac{-p}{b}j\}$ via Corollary \ref{cor:last-two-terms-bd-iter}, when considering $D \sim nbD_x$. We remark that when considering $D \sim ncD_x$, i.e., bounding $-\frac{b-1}{2} \{ \frac{4n}{b} \} + \sum_{j=0}^r\{ \frac{-p}{b}j\}$ in the constant term of $|P_{ncD_x}\cap\mathbb{Z}^2|$, $\sigma_1 = 1$ needs to be replaced by $\sigma_1=-1$. }
\label{table:proof-steps}
\end{table}

We begin by taking crude upper bounds on the $\epsilon_i$. For small $n$, we will need to replace these crude bounds with more refined ones. By Lemma \ref{l:bounds-on-epp-and-eppp}, we have
\[
\epsilon_i\leq\frac{\beta_{i-1}-1}{2\beta_{i-1}}=:\epsilon_i^+
\]
for $i\in\{1,2,5\}$ and
\[
\epsilon_4\leq1=:\epsilon_4^+.
\]
Furthermore, one checks that for $k\geq11$,
\[
\epsilon_3\leq \frac{1}{8\beta_2\beta_3}(\beta_2-\beta_3+3)(\beta_2-\beta_3-1)=:\epsilon_3^+
\]
as $\epsilon_3^+\geq1$. It is enough to prove Theorem \ref{thm:n0-doesnt-divide-n} for $k\geq11$, leaving the remaining finitely many cases $2\leq k\leq10$ to be checked by hand.


Next, solving for $p$ in terms of $b$, we have $p=\frac{1-\alpha_1b}{\beta_1}$ from which we find $c=4p+3b=\frac{1 + b (1 + 2 k)^2}{4 k^2}$. It follows that $1=\frac{1}{4k^2}\frac{1}{c}+\frac{(1 + 2 k)^2}{4 k^2}s$, and so
\[
\frac{1}{c}=4k^2-(1+2k)^2s.
\]
Note also that from our expression for $c$ in terms of $b$, we have
\[
s=\frac{4 k^2}{(1 + 2 k)^2 + \frac{1}{b}}.
\]
Combining this with our results from Section \ref{sec:Ehrhart}, we see
\[
c_1=\frac{1}{2} + \frac{\beta_1}{\frac{8}{b}+2(3\beta_1 - 4\alpha_1)} + 2(4k^2-(1+2k)^2s).
\]
Recall also that
\[
c_2 = \frac{2\beta_1}{\frac{4}{b}+(3\beta_1-4\alpha_1)}.
\]
We have therefore expressed $c_2$, $c_1$, $s$, $p$, and $c$ all in terms of $k$ and $b$.

Let $n = (2k+1)t + u$ for $t \geq 0$, $1 \leq u \leq 2k$, where $u \neq 0$ since $n_0=2k+1$ does not divide $n$. We can then express
\[
\left\lceil \frac{\nu_0}{n_0}n \right\rceil =\left\lceil \frac{4k}{2k+1}n \right\rceil = 4kt + 2u + \epsilon :=
\begin{cases}
4kt + 2u &\text{if } 1 \leq u \leq k, \\
4kt + 2u - 1 &\text{if } k+1 \leq u \leq 2k.
\end{cases}
\]
We are now ready to show
\[
f := c_2n^2 + c_1n + c_0 - \binom{\left\lceil \frac{\nu_0}{n_0}n \right\rceil+ 1}{2} <1
\]

Replacing $c_0$ by quantity $\frac{9}{8}+\frac{1}{32s}+\frac{1}{4}+\sum_{i=1}^5\epsilon_i^+$, we obtain a larger function $g=\frac{g_1}{g_2}$ where the $g_i$ are polynomials in $t,u,b,k$ and $g_2>0$. One checks that $g_2-g_1$ is decreasing in $t$ and that it is a quadratic in $b$ with positive $b^2$-coefficient for $t>\frac{k}{16}$. Thus, for $t>\frac{k}{16}$ and $b$ sufficiently large, we have shown $f<1$.

We next turn to the case where $t\leq\frac{k}{16}$. Then $n=(2k+1)t+u\leq\frac{k^2}{8}+\frac{33k}{16}$ and so
\[
r\leq 4sn<\frac{\beta_2-\beta_3-1}{2}<\beta_2.
\]
As a result, $r=u_1=u_2$ and $t_1=t_2=0$. We may therefore plug in directly to the definition of $\epsilon_1$ and $\epsilon_2$ to obtain better bounds than $\epsilon_1^+$ and $\epsilon_2^+$; using the final statement of Lemma \ref{l:bounds-on-epp-and-eppp} and the fact that $\frac{\beta_3-\beta_2-1}{2}<0\leq r$, we see
\[
\epsilon_i\leq\frac{1}{2\beta_{i-1}\beta_i}(4sn+1)(4sn+\beta_{i-1}-\beta_i)=:\epsilon_i^{++}
\]
for $i\in\{1,2\}$. Similarly, we find
\[
\epsilon_3\leq-\frac{1}{2\beta_2\beta_3}(4sn+1)(4sn+\beta_3-\beta_2)=:\epsilon_3^{++}.
\]
Using the same argument as in the previous paragraph, replacing the use of $\epsilon_i^+$ with $\epsilon_i^{++}$ for $i\in\{1,2,3\}$, we now find that $g_1-g_2$ is a cubic in $b$ with positive $b^3$-coefficient whenever $n\geq\sqrt{k}$ and $n\neq k+1$.

It therefore remains to handle the cases $n=k+1$ and $n<\sqrt{k}$. We consider $n<\sqrt{k}$ first. Here, $t_1=t_2=t_3=t_4=0$ and $r=u_1=u_2=u_3=u_4$. Furthermore, $r\leq 4sn<\frac{\beta_3-\beta_4-1}{2}$, so we have
\[
\epsilon_4\leq-\frac{1}{2\beta_3\beta_4}(4sn+1)(4sn+\beta_4-\beta_3)=:\epsilon_4^{++}.
\]
Now, since $\beta_5=2$, we know $u_5=0$ or $u_5=1$. Plugging back into the definition of $\kappa'$ and $\epsilon_5$ and using that $\epsilon_5$ is increasing on the range from $r$ to $4sn$, we find
\[
\kappa'+\epsilon_5\leq\epsilon_5^{++}:=
\begin{cases}
\frac{1}{4}+\frac{1}{2\beta_4}((4sn)^2 - (1+\beta_4)(4sn)+\beta_4-2),& r\textrm{\ is even}\\
\frac{1}{2\beta_4}((4sn)^2 - (1+\beta_4)(4sn)+3\beta_4-2),& r\textrm{\ is odd}.
\end{cases}
\]
Treating these cases separately and replacing our use of $c_0$ with $\frac{9}{8}+\frac{1}{32s}+\sum_{i=1}^5\epsilon_i^{++}$ yields $f<1$ for all $n<\sqrt{k}$.

Next, we turn to the case $n=k+1$. Here $r=4k-1$, so $r=u_1=u_2=u_3$, $u_4=u_5=1$, $t_1=t_2=t_3=t_5=0$, and $t_4=2$. Since $\{sn\}=sn-(k-1)\geq\frac{1}{2}+\frac{1}{80s}$, Corollary \ref{l:1st-through-4th-terms} tells us $c_0\leq1+\kappa'+\sum_{i=1}^5\epsilon_i$. Directly using the definition of the $\epsilon_i$ functions, we find $g_1-g_2$ is a quadratic in $b$ with positive $b^2$-coefficient. This concludes our proof for $\delta'=\delta=b$, $\beta_1=\beta^{(1)}=16k^2$ and $\alpha_1=\alpha^{(1)}=8k^2-4k-1$.

Finally, if we suppose $\delta'\neq\delta$ instead, then the weighted projective spaces considered are the ones satisfying $\alpha_1 b+\beta_1 p = \pm 1$ with $\frac{b}{-p}\in I'_{k,\pm}$ or $\frac{b}{-p}\in I''_{k,\pm}$. Notice that $\alpha_1$ and $\beta_1$ are the corresponding ones listed in Entries 3 and 4 of Table \ref{table:proof-steps}, which have considerably fewer steps than the ones for $\delta' = \delta$. The proof is almost exactly the same as the above, with the only difference being the technique used to rewrite $\lceil \frac{\delta'}{\delta}\frac{\nu_0}{n_0}n \rceil$ as a piecewise linear function in $t, u$ such that $n_0t+u = n$, $t \geq 0$, $0 \leq u \leq n_0-1$. We illustrate this with the case $\beta^{(2)}=\beta_1=2k-1, \alpha^{(2)}=\alpha_1=k-1, \sigma_1=-1$ over the interval $I'_{k,-}=[\frac{\beta^{(1)}}{\alpha^{(1)}}, \frac{\beta^{(2)}}{\alpha^{(2)}}]$ for $k \geq 2$, $\alpha^{(1)}$ and $\beta^{(1)}$ as in the previous paragraph. Here, $\delta=b$, $\delta'=c$, and $D_0 \sim (2k+1)bD_x$ with $\nu_0 = 4k$ as before. Notice that
\[
\frac{c}{b} = 4\frac{p}{b}+3 = \frac{4}{\beta^{(2)}}\left(-\alpha^{(2)} - \frac{1}{b}\right) + 3 = \frac{2k+1}{2k-1} - \frac{4}{(2k-1)b},
\]
so that
\[
\left\lceil \frac{c}{b}\frac{4k}{2k+1}n \right\rceil = \left\lceil \frac{4k}{2k-1}n \right\rceil
\]
for all $\frac{16k}{(2k+1)(2k-1)b}n < \frac{1}{2k+1} \iff n < \frac{(2k-1)b}{16k}$. Thus, we may use our previous technique to rewrite the above ceiling function as a polynomial for all $n < \frac{(2k-1)b}{16k}$. Replacing the function $f$ with an upper bound obtained by the same process as before, we may also conclude for $n > \frac{(2k-1)b}{16k}$ by examining the asymptotic behaviour of $f$, similarly as in the previous case. 

We remark that for all other cases where $\delta'\neq\delta$, the ceiling function may be simplified in such a manner for all $n < Cb$ with $C > 0$ a constant.
\end{proof}

To finish the proof of Theorem \ref{c:P4bc-conjs}, we must now handle the case where $n_0$ divides $n$ with $\delta'=\delta$. This is substantially easier than Theorem \ref{thm:n0-doesnt-divide-n}.

\begin{proposition}
\label{prop:n0-does-divide-n}
Inequality (\ref{eqn:main-ineq-bD1-v2}) holds when $n_0$ divides $n$ and $\delta'=\delta$.
\end{proposition}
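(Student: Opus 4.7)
My plan is to mirror the proof of Theorem~\ref{thm:n0-doesnt-divide-n}; the much larger slack on the right-hand side of~(\ref{eqn:main-ineq-bD1-v2}) should make the argument significantly shorter. As in that theorem, I will reduce via Remark~\ref{rmk:red-to-bds-seqs} to sequences of weighted projective surfaces along which $\alpha^{(i)}b - \beta^{(i)}(-p) = \pm 1$, for $b$ sufficiently large. Writing $n = mn_0$, Proposition~\ref{prop:Ehrhart} gives $|mn_0\delta D_x| = c_2 m^2 n_0^2 + c_1 m n_0 + c_0$, so the desired inequality~(\ref{eqn:main-ineq-bD1-v2}) is equivalent to
\[
\Delta(m) \; := \; m^2\Bigl(\tfrac{\nu_0^2}{2} - c_2 n_0^2\Bigr) + m\Bigl(\tfrac{3\nu_0}{2} - c_1 n_0\Bigr) + (1 - c_0) \; > \; 0.
\]

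First I would pin down the leading coefficient of $\Delta(m)$. A direct computation, essentially reverse-engineered from the proof of Proposition~\ref{prop:nu-of-winning-poly}, should show that the defining inequalities of $I'_{k,\pm}$ and $I''_{k,\pm}$ are precisely the conditions $c_2 n_0^2 \leq \nu_0^2/2$, with equality attained at exactly one designated endpoint of each interval. Along our sequences, the $m^2$-coefficient of $\Delta(m)$ is therefore either $\Theta(1/b)$ (near the matching endpoint) or bounded below by a positive constant (near the opposite endpoint); in either case it is nonnegative and may be safely discarded. A short further calculation then shows that $\tfrac{3\nu_0}{2} - c_1 n_0$ is positive and bounded below by a constant $L(k)$ growing linearly in $k$; for instance, in the case $D_0 \sim (2k+3)bD_x$ of Theorem~\ref{c:P4bc-conjs} the limit as $b \to \infty$ is $\tfrac{16k^2 + 40k + 23}{2(2k+3)}$.

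Next I would bound the constant term using the apparatus of Section~\ref{sec:last2terms}. Corollary~\ref{l:1st-through-4th-terms} followed by Corollary~\ref{cor:last-two-terms-bd-iter} with the appropriate entry of Table~\ref{table:proof-steps}, using only the crude bounds $\epsilon_i \leq \epsilon_i^+$ from Lemma~\ref{l:bounds-on-epp-and-eppp}, should give $c_0 \leq U(k)$ where $U(k)$ depends only on $k$. Combining yields $\Delta(m) \geq mL(k) + 1 - U(k)$, which is positive for all $m \geq 1$ provided $L(k) > U(k) - 1$. The latter inequality holds for all sufficiently large $k$ by the linear growth of $L(k)$, and the remaining finitely many small-$k$ cases, in each of the eight subcases (four intervals, two sequences each), can be verified by hand.

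The main difficulty overcome in Theorem~\ref{thm:n0-doesnt-divide-n} is avoided here because the slack $\binom{m\nu_0+2}{2} - \binom{m\nu_0+1}{2} - 1 = m\nu_0$ grows linearly in $m$. Thus neither the refined $\epsilon_i^{++}$ bounds nor the delicate case analysis on $n \bmod b$ and $n \bmod c$ from that theorem will be needed; the crude $\epsilon_i^+$ bounds should suffice. The only nontrivial obstacle I anticipate is verifying $L(k) > U(k) - 1$ in the smallest-$k$ cases, where the explicit constants have to be compared directly.
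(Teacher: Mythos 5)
Your overall architecture is the paper's: reduce via Remark \ref{rmk:red-to-bds-seqs} to the sequences with $\alpha b-\beta(-p)=\pm1$ and $b$ large, expand $|n\delta D_x|$ by Proposition \ref{prop:Ehrhart}, bound $c_0$ by the crude $\epsilon$-bounds of Lemma \ref{l:bounds-on-epp-and-eppp} through Corollary \ref{cor:last-two-terms-bd-iter}, and finish with elementary analysis of the resulting quadratic in $m$. Your endgame is a mild variant of the paper's: the paper keeps the (negative) $t^2$-term of $f$, checks $\partial g/\partial t<0$ and that $g|_{t=1}$, cleared of denominators, is a quadratic in $b$ with negative leading coefficient; you instead observe that on the relevant (matching-endpoint) sequences the $m^2$-coefficient $\tfrac{\nu_0^2}{2}-c_2n_0^2$ is strictly positive of size $\Theta(1/b)$ — this is a correct computation, e.g.\ on the sequence toward $\tfrac{16(k+1)^2}{8(k+1)^2-4(k+1)-1}$ one has $\tfrac1s=\tfrac{(2k+3)^2}{4(k+1)^2}+\tfrac{4}{\beta b}$ — so it can be discarded, and then compare the linear coefficient (your $L(k)$, whose limit $\tfrac{16k^2+40k+23}{2(2k+3)}$ is right) against a $k$-only bound $U(k)$ on $c_0$. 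Quantitatively this closes: $L(k)\sim 4k$ while the crude bounds give $U(k)\lesssim \mathrm{const}+k/8$ (the $k/8$ coming from $\epsilon_3^+$ in Entries 1--2). Note also that for this proposition only one sequence per interval is needed (the opposite-endpoint sequence is the $\delta'\neq\delta$ case, already covered by Theorem \ref{thm:n0-doesnt-divide-n}), so there are four subcases, not eight.

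The one step that would fail as written is your bound $c_0\le U(k)$ for the two intervals where $D_0\sim n_0cD_x$ (i.e.\ $\delta=c$, intervals $I'_{k,+}$ and $I''_{k,+}$). There you must bound
\[
c_0(cD_x,n)=1-\left\{\tfrac{4n}{b}\right\}-\tfrac{b-1}{2}\left\{\tfrac{4n}{b}\right\}+\sum_{j=0}^{b\{4n/b\}}\left\{\tfrac{-p}{b}j\right\}
\]
from above, which requires a \emph{lower} bound on $\tfrac{b-1}{2}\{\tfrac{4n}{b}\}-\sum_j\{\tfrac{-p}{b}j\}$; Corollary \ref{l:1st-through-4th-terms} does not apply (it is a statement about $c_0(bD_x,n)$), and the crude upper bounds $\epsilon_i\le\epsilon_i^+$ point in the wrong direction. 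What you need are the lower bounds on $\epsilon(\pm1,\cdot)$ from Lemma \ref{l:bounds-on-epp-and-eppp}, applied to the sequences with $\alpha b-\beta(-p)=-1$, where $\sigma_1=-1$ (this is the content of the remark in the caption of Table \ref{table:proof-steps}); since $\epsilon(-1,\cdot)\ge\tfrac{\beta'-\beta}{2\beta'\beta}>0$ handles the one step involving $\beta_0=b$, and all remaining $\beta_i$ depend only on $k$, one still gets a $k$-only bound $U(k)$ and your comparison $L(k)>U(k)-1$ goes through. So the gap is directional rather than structural, but the mechanism you name would not produce the needed bound for half of the cases.
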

\begin{proof}
As in the proof of Theorem \ref{thm:n0-doesnt-divide-n}, we handle the case where $\beta_1:=\beta^{(1)}=16k^2$, $\alpha_1:=\alpha^{(1)}=8k^2-4k-1$, $\delta=b$, $n_0=2k+1$, and $\nu_0=4k$. The other cases are similar and easier. By Remark \ref{rmk:red-to-bds-seqs}, it suffices to prove the result for $b$ sufficiently large, where
\[
\alpha_1b-\beta_1(-p)=1.
\]
Let $\alpha_0:=-p$, $\beta_0:=b$, and $(\alpha_i,\beta_i,\sigma_i)$ be as in Table \ref{table:proof-steps}. It is enough to show
\[
f := c_2(n_0t)^2 + c_1n_0t + c_0 - \binom{\nu_0 t + 2}{2} < 0
\]
for all $t \geq 1$. Indeed, if $n=n_0t$ and $|nbD_x| < \binom{\nu_0 n + 2}{2}$, then $\nu(nbD_x) < \nu_0 n + 1$ which implies $\nu(nD_0) \leq \nu_0n$, as required. Replacing $c_0$ by the crude upper bound 
\[
c_0\leq\frac{9}{8}+\frac{1}{32s}+\frac{1}{4}+\sum_{i=1}^5\epsilon_i^+
\]
as in the proof of Theorem \ref{thm:n0-doesnt-divide-n}, we obtain a larger function $g\geq f$. One computes $\frac{\partial g}{\partial t}<0$, so it is enough show $g|_{t=1}<0$. After clearing denominators, one is left with a quadratic in $b$ whose $b^2$-coefficient is negative. Thus, for $b$ sufficiently large, $f\leq g<0$.
\end{proof}

\end{document}


\begin{appendices}
\section{Bounds on $c_0(bD_x,n)$ and $c_0(cD_x,n)$}

For completeness, in this appendix, we list the bounds one uses to carry out the proofs of Theorem \ref{thm:n0-doesnt-divide-n} and Proposition \ref{prop:n0-does-divide-n} for the remaining cases.

\begin{table}[h!]
\centering
\begin{tabular}{ | p{4cm} | c | c | c | c | }
\hline
Phase Transition & Sequence of wps & $\alpha_i$ & $\beta_i$ & $\sigma_i$ \\ \hline
\multirow{8}{*}{$\frac{16k^2}{8k^2-4k-1}, k \geq 2$} & \multirow{4}{*}{$\alpha_1 b - \beta_1 (-p) = 1$} & $\alpha_1 = 8k^2-4k-1$ & $\beta_1 = 16k^2$ & $\sigma_1 = 1$ \\
& & $\alpha_2 = 4k^2-4k+1$ & $\beta_2 = 8k^2-4k+1$ & $\sigma_2 = 1$ \\
& & $\alpha_3 = 4k-3$ & $\beta_3 = 8k-2$ & $\sigma_3 = -1$ \\
& & $\alpha_4 = k-1$ & $\beta_4 = 2k-1$ & $\sigma_4 = -1$ \\ \cline{2-5}
& \multirow{4}{*}{$\alpha_1 b - \beta_1 (-p) = -1$} & $\alpha_1 = 8k^2-4k-1$ & $\beta_1 = 16k^2$ & $\sigma_1 = -1$ \\
& & $\alpha_2 = 4k^2-2$ & $\beta_2 = 8k^2+4k-1$ & $\sigma_2 = -1$  \\
& & $\alpha_3 = 4k-3$ & $\beta_3 = 8k-2$ & $\sigma_3 = -1$ \\
& & $\alpha_4 = k-1$ & $\beta_4 = 2k-1$ & $\sigma_4 = -1$ \\ \hline
\multirow{8}{*}{$\frac{4(2k+1)^2}{8k^2+4k-1}, k \geq 1$} & \multirow{4}{*}{$\alpha_1 b - \beta_1 (-p) = 1$} & $\alpha_1 = 8k^2+4k-1$ & $\beta_1 = 4(2k+1)^2$ & $\sigma_1 = 1$ \\
& & $\alpha_2 = 4k^2$ & $\beta_2 = 8k^2+4k+1$ & $\sigma_2 = 1$ \\
& & $\alpha_3 = 4k-1$ & $\beta_3 = 8k+2$ & $\sigma_3 = -1$ \\
& & $\alpha_4 = k$ & $\beta_4 = 2k+1$ & $\sigma_4 = 1$ \\ \cline{2-5}
& \multirow{4}{*}{$\alpha_1 b - \beta_1 (-p) = -1$} & $\alpha_1 = 8k^2+4k-1$ & $\beta_1 = 4(2k+1)^2$ & $\sigma_1 = -1$ \\
& & $\alpha_2 = 4k^2+4k-1$ & $\beta_2 =8k^2+12k+3$ & $\sigma_2 = -1$ \\
& & $\alpha_3 = 4k-1$ & $\beta_3 = 8k+2$ & $\sigma_3 = -1$ \\
& & $\alpha_4 = k$ & $\beta_4 = 2k+1$ & $\sigma_4 = 1$ \\ \hline
\multirow{4}{*}{$\frac{4k}{2k-1}, k \geq 1$} & \multirow{2}{*}{$\alpha_1 b - \beta_1 (-p) = 1$} & $\alpha_1 = 2k-1$ & $\beta_1 = 4k$ & $\sigma_1 = 1$  \\
& & $\alpha_2 = k$ & $\beta_2 = 2k+1$ & $\sigma_2 = 1$ \\ \cline{2-5}
& \multirow{2}{*}{$\alpha_1 b - \beta_1 (-p) = -1$} & $\alpha_1 = 2k-1$ & $\beta_1 = 4k$ & $\sigma_1 = -1$  \\
& & $\alpha_2 = k$ & $\beta_2 = 2k+1$ & $\sigma_2 = 1$ \\ \hline
\multirow{2}{*}{$\frac{2k+1}{k}, k \geq 1$} & $\alpha_1 b - \beta_1 (-p) = 1$ & $\alpha_1 = k$ & $\beta_1 = 2k+1$ & $\sigma_1 = 1$ \\ \cline{2-5}
& $\alpha_1 b - \beta_1 (-p) = -1$ & $\alpha_1 = k$ & $\beta_1 = 2k+1$ & $\sigma_1 = -1$ \\ \hline
\end{tabular}
\caption{Values of $\alpha_i$ and $\beta_i$ used to simplify $\sum_{j=0}^r\{\frac{-pj}{b}\}$ via Corollary \ref{cor:last-two-terms-bd-iter}.}
\label{table:last-two-terms-bd-iter-steps-old}
\end{table}

\begin{proposition}
\label{prop:const-term-bd-nbD1-A1}
Suppose that $b,p$ coming from $\mathbb{P}(4,b,c)$ with $p < 0$ satisfy
\[
\alpha_1 b - \beta_1(-p) = 1.
\]
Let $r = b\left\{ \frac{\lfloor 4sn \rfloor}{b} \right\} = t_1\beta_1 + u_1 = t_1\beta_1 +t_2\beta_2 +u_2=\dots=\sum_{i=1}^N t_i\beta_i + u_N$, with $\beta_i$'s being respective values from Table \ref{table:last-two-terms-bd-iter-steps} and $u_i \equiv \beta_i, 0 \leq u_i \leq \beta_i - 1$ for each $i$. Then,
\begin{enumerate}
\item For $\alpha_1 = 8k^2+4k-1, \beta_1 = 4(2k+1)^2, k \geq 1$,
\[
c_0(bD_x,n) \leq 
\begin{cases}
-\frac{1}{2\beta_2\beta_3}r^2 + \left( \frac{1}{2\beta_3}-\frac{1}{2\beta_2} \right)r + \frac{15}{8} + \frac{1}{32s} + \frac{k}{2(4k+1)} + \frac{1}{2(2k+1)} + \frac{\beta_2+2}{8\beta_1} &\text{if } r \leq \frac{\beta_2 - \beta_3 - 1}{2}\\
\frac{\beta_2}{8\beta_3} + \frac{25}{8} + \frac{1}{32s} - \frac{1}{2\beta_1} - \frac{1}{2\beta_2} + \frac{k}{2(4k+1)} + \frac{1}{2(2k+1)} &\text{if }r \leq b - 1.
\end{cases}
\]
\item For $\alpha_1 = 2k - 1$, $\beta_1 = 4k, k \geq 1$,
\[
c_0(bD_x,n) \leq 
\begin{cases}
\frac{u_1^2-(2k-1)u_1}{8k} + \frac{15}{8} + \frac{1}{32s} &\text{if }n < \frac{k}{2} \implies r=u_1 = \lfloor 4sn \rfloor < 2k, \\
\leq \frac{15}{8} + \frac{1}{32s} &\text{if }r < b - 1.
\end{cases}
\]
\item For $\alpha_1 = k, \beta_1 = 2k + 1, k \geq 1$,
\[
c_0(bD_x,n) \leq 
\begin{cases}
1 + \frac{3}{2k+1} - \frac{1}{2b} &\text{if }n \leq \frac{k}{2}(1-\frac{1}{40s}) \implies r = u_1 = \lfloor 4sn \rfloor < 2k, \\
\frac{13}{8} + \frac{1}{2(2k+1)} + \frac{1}{32s} - \frac{1}{2b} &\text{if }r \leq b - 1.
\end{cases}
\]
\end{enumerate}
\end{proposition}

\begin{proposition}
\label{prop:const-term-bd-ncD1-A2}
Suppose that $b,p$ coming from $\mathbb{P}(4,b,c)$ with $p < 0$ satisfy
\[
\alpha_1 b - \beta_1(-p) = -1.
\]
Let $r = b\left\{ \frac{\lfloor 4sn \rfloor}{b} \right\} = t_1\beta_1 + u_1 = t_1\beta_1 +t_2\beta_2 +u_2=\dots=\sum_{i=1}^N t_i\beta_i + u_N$, with $\beta_i$'s being respective values from Table \ref{table:last-two-terms-bd-iter-steps} and $u_i \equiv \beta_i, 0 \leq u_i \leq \beta_i - 1$ for each $i$. Then,
\begin{enumerate}
\item For $\alpha_1 = 8k^2-4k-1, \beta_1 = 16k^2, k \geq 2$, 
\[
c_0(cD_x,n) 
\leq \frac{3}{2} - \frac{r}{b} +  \frac{k-1}{2(4k-1)}  + \frac{-u_4^2+(2k-2)u_4}{4(2k-1)} + \frac{\beta_1}{b} + \frac{\beta_2}{\beta_1} + \frac{\beta_3}{\beta_2}.
\]
\item For $\alpha_1 = 8k^2+4k-1, \beta_1 = 4(2k+1)^2, k \geq 1$,
\[
c_0(cD_x,n) \leq  \frac{3}{2} - \frac{r}{b}  + \frac{8k-2}{2(8k+2)} + \frac{-u_4^2+2ku_4}{4(2k+1)} + \frac{\beta_1}{b} + \frac{\beta_2}{\beta_1} + \frac{\beta_3}{\beta_2}.
\]
Moreover, we can refine this upper bound for $r \leq \beta_3 - 1 \implies t_1 = t_2 = t_3 = 0$, $t_4 = 0, 1, 2$:
\[
c_0(cD_x,n) \leq
\begin{cases}
1 + \frac{-u_4^2+(\beta_4-1)u_4}{4\beta_4} + \frac{2k+1}{8\beta_2} \text{ if }t_4 = 0, \\
\frac{3}{2} - \frac{k+1}{8k+2} + \frac{-u_4^2+(\beta_4-1)u_4}{4\beta_4} - \frac{u_4}{\beta_3} + \frac{4k+1}{\beta_2} \text{ if }t_4 = 1, \\
2 - \frac{2k+1}{4k+1} + \frac{-u_4^2+(\beta_4-1)u_4}{4\beta_4} - \frac{2u_4}{\beta_3} + \frac{9k+3}{2\beta_2} \text{ if }t_4 = 2.
\end{cases}
\]
\item For $\alpha_1 = 2k - 1$, $\beta_1 = 4k, k \geq 1$,
\[
c_0(cD_x,n) \leq 
\begin{cases}
1 - \frac{r}{b} &\text{if } r \equiv u_1 \equiv 0 \mod 4k, \\
\frac{7}{4} + \frac{-u_2^2 + 2ku_2}{4(2k+1)} - \frac{1}{4k} + \frac{4k-r}{b}  &\text{if }r < b - 1.
\end{cases}
\]
\item For $\alpha_1 = k, \beta_1 = 2k + 1, k \geq 1$,
\[
c_0(cD_x,n) 
\leq \left( \frac{-1}{4(2k+1)} + \frac{1}{2b(2k+1)} \right)u_1^2 + \left( \frac{1}{4} + \frac{1}{4(2k+1)} + \frac{1}{2b} \right)u_1 + 1 - \frac{r}{b}.
\]
\end{enumerate}
\end{proposition}

\end{appendices}
\end{document}

\[
\rule{8cm}{0.4pt}
\]

We begin by considering the case where $n$ is divisible by $n_0$. Here it is enough to show a slightly weaker bound, namely
\[
c_2(n_0\delta D_x, n)n^2 + c_1(n_0\delta D_x, n)n + c_0(n_0\delta D_x, n) - \binom{\nu_0 n + 2}{2} < 0
\]
or, equivalently
\[
f_1(b,c,n) = c_2(\delta D_x, n)(n_0n)^2 + c_1(\delta D_x, n)(n_0n) + c_0(\delta D_x, n_0n) - \binom{\nu_0 n + 2}{2} < 0
\]
for all $n \geq 1$. This is because whenever $|nD_0| < \binom{\nu_0 n + 2}{2}$, we have $\nu(nD_0) < \nu_0 n + 1$ which implies $\nu(nD_0) \leq \nu_0 n$, as required.
\begin{proposition}
\label{prop:main-part-I}
For each $\{ D_0 \sim n_0\delta D_x, I=(\frac{\beta^{(1)}}{\alpha^{(1)}},\frac{\beta^{(2)}}{\alpha^{(2)}}) \}$ with $\nu(D_0) \geq \nu_0$ as listed in Theorem \ref{c:P4bc-conjs}, we have that
\[
f_1(b,c,n) < 0
\]
for all $n \geq 1$.
\end{proposition}
\begin{proof}
We first recall the expressions for $c_2(\delta D_x, n)$ and $c_1(\delta D_x, n)$. If $\delta = b$, then we are looking at wps's $\mathbb{P}(4,b,c)$ on this range satisfying $\alpha^{(1)} b + \beta^{(1)} p = 1$. Recall that $4p+3b=c$. Thus,
\begin{align*}
c_2(bD_x, n) &= c_2(bD_x, n) = 2\frac{b}{c} = \frac{2}{4\frac{p}{b}+3} = \frac{2\beta^{(1)} b}{4+(3\beta^{(1)}-4\alpha^{(1)})b}, \\
c_1(bD_x, n) &= \frac{1}{2} + \frac{b}{2c} + \frac{2}{c} = \frac{1}{2} + \frac{\beta^{(1)} b}{8+2(3\beta^{(1)} - 4\alpha^{(1)})b} + \frac{2}{c}.
\end{align*}
This allows us to rewrite $f_1(b,c,n)$:
\[
f_1(b,c,n) := \left( \frac{2\beta^{(1)}n_0^2 b}{4+(3\beta^{(1)}-4\alpha^{(1)})b} - \frac{\nu_0^2}{2}\right)n^2 + \left( \frac{n_0}{2} + \frac{\beta^{(1)}n_0 b}{8+2(3\beta^{(1)}-4\alpha^{(1)})b} + \frac{2n_0}{c} - \frac{3}{2}\nu_0 \right)n + c_0(bD_x, n_0n).
\]
Moreover, note that in this case, either $\alpha^{(1)} = 8k^2 - 4k - 1, \beta^{(1)} = 16k^2, k \geq 2$ or $\alpha^{(1)} = 8k^2+4k-1, \beta^{(1)} = 4(2k+1)^2, k \geq 1$. Then, substituting $\alpha^{(1)}, \beta^{(1)}$ for each of the above cases and using the corresponding bounds for $c_0(bD_x, n_0n)$ in Corollary \ref{cor:const-term-bd-nbD1}, we may calculate that
\[
\frac{\partial}{\partial n} f_1(b,c,n) < 0
\]
for all $n \geq 1$. Also, we may calculate that $f(b,c,1) < 0$ using the $r-$quadratic in Corollary \ref{cor:const-term-bd-nbD1}, and 
\[
f(b,c,N) :=
\begin{cases}
f(b,c,1) < 0, &\text{if }\frac{\beta_2-\beta_3-1}{8sn_0} < 1, \\
f(b,c,\frac{\beta_2-\beta_3-1}{8sn_0}) < 0, &\text{otherwise,}
\end{cases}
\]
using the uniform bound in Corollary \ref{cor:const-term-bd-nbD1}, noting that $n = \frac{\beta_2-\beta_3-1}{8sn_0} \implies$ $r = b\{ \frac{\lfloor 4sn_0n \rfloor}{b}\} = \lfloor 4sn_0n \rfloor \leq \frac{\beta_2-\beta_3-1}{2}$ in Corollary \ref{cor:const-term-bd-nbD1}. This concludes our proof for $\delta = b$.
\\
If $\delta = c$, then we are looking at wps's $\mathbb{P}(4,b,c)$ on this range satisfying $\alpha^{(2)} b + \beta^{(2)} p = -1$. Similarly as above,
\begin{align*}
c_2(\delta D_x, n) &= c_2(cD_x, n) = 2\frac{c}{b} = 8\frac{p}{b} + 6 = \frac{-8+2(3\beta^{(2)}-4\alpha^{(2)})b}{\beta^{(2)} b}, \\
c_1(cD_x, n) &= \frac{1}{2} + \frac{c}{2b} + \frac{2}{b} = \frac{1}{2} + \frac{-4+(3\beta^{(2)}-4\alpha^{(2)})b}{\beta^{(2)} b} + \frac{2}{b}.
\end{align*}
In this case, either $\alpha^{(2)} = 8k^2 - 4k - 1, \beta^{(2)} = 16k^2, k \geq 2$ or $\alpha^{(2)} = 8k^2+4k-1, \beta^{(2)} = 4(2k+1)^2, k \geq 1$. Using Corollary \ref{cor:const-term-bd-ncD1}, we find that $f_1(b,c,n) < 0$ for $n \geq 1$ via exactly the same proof as above.
\end{proof}

\begin{remark}
In particular, note that Proposition \ref{prop:main-part-I} establishes the fact that $\nu(D_0)=\nu_0$ for each $D_0$ as listed in Theorem \ref{c:P4bc-conjs}.
\end{remark}

Next, we consider the case where $[D]$ is not a multiple of $[D_0]$ and $D \sim n\delta D_x$. We can rewrite the inequalities involving $nu$'s as $\nu(n\delta D_x) < \frac{\nu_0}{n_0}n$. In terms of the Ehrhart polynomial of $\delta D_x$, we will need to show
\[
f_2(b,c,n) := c_2(\delta D_x, n)n^2 + c_1(\delta D_x, n)n + c_0(\delta D_x, n) - \binom{\left\lceil \frac{\nu_0}{n_0}n \right\rceil+ 1}{2} < 1,
\]
by the definition of $\nu(D)$.
\begin{proposition}
\label{prop:prop:main-part-II}
For each $\{ D_0 \sim n_0\delta D_x, I=(\frac{\beta^{(1)}}{\alpha^{(1)}},\frac{\beta^{(2)}}{\alpha^{(2)}}) \}$ with $\nu(D_0) = \nu_0$ as listed in Theorem \ref{c:P4bc-conjs}, we have that
\[
f_2(b,c,n) < 1
\]
for all $n \geq 1$ such that $n$ is not a multiple of $n_0$.
\end{proposition}
\begin{proof}
First suppose $\delta = b$. Thus, we are looking at all wps's $\mathbb{P}(4,b,c)$ with $\frac{b}{-p} \in I$ satisfying $\alpha^{(1)}b + \beta^{(1)}p=1$. From Proposition \ref{prop:main-part-I}:
\[
c_2(bD_x,n) = \frac{2\beta^{(1)} b}{4+(3\beta^{(1)}-4\alpha^{(1)})b}, 
c_1(bD_x, n) = \frac{1}{2} + \frac{\beta^{(1)} b}{8+2(3\beta^{(1)} - 4\alpha^{(1)})b} + \frac{2}{c}.
\]
Recall that we only need to consider wps's with sufficiently large $b$'s, thus sufficiently large $c$'s and $-p$'s. We will consider the two phase transitions $\frac{\beta^{(1)}}{\alpha^{(1)}}$ corresponding to $\delta = b$ separately. 
\begin{enumerate}
\item $\alpha^{(1)} = 8k^2 - 4k - 1, \beta^{(1)} = 16k^2, k \geq 2$.  By Propositions \ref{prop:nu-of-winning-poly} and \ref{prop:main-part-I}, $\nu(D_0) := \nu((2k+1)bD_x) = 4k$. 

Write $n = (2k+1)t + u$ for $t \geq 0$, $1 \leq u \leq 2k$, where $u \neq 0$ by assumption. We can write the ceiling function in $f_2(b,c,n)$ as
\[
\left\lceil \frac{4k}{2k+1}n \right\rceil = 4kt + 2u + \epsilon :=
\begin{cases}
4kt + 2u &\text{if } 1 \leq u \leq k, \\
4kt + 2u - 1 &\text{if } k+1 \leq u \leq 2k.
\end{cases}
\]
This allows us to write the binomial coefficient in $f_2(b,c,n)$ in terms of $t,u$ and $k$. Similarly, we can write $c_2(\delta D_x,n) = c_2(\delta D_x, u, t)$ and $c_1(\delta D_x,n) = c_1(\delta D_x, u, t)$ in $u,t,k$ and $b$. If we suppose $n \leq \frac{\beta_2 - \beta_3 - 1}{8s}$ $\implies r \leq \frac{\beta_2 - \beta_3 - 1}{2}$, then we can write the $r-$quadratic upper bound on $c_0(\delta D_x,n)$ from Corollary \ref{cor:const-term-bd-nbD1} in $u$ and $t$ via the substitution
\[
r = \lfloor 4sn \rfloor \rightarrow 4s(2k+1)t + 4su,
\]
since the $r-$quadratic upper bound is strictly increasing on $0 \leq r \leq \frac{\beta_2 - \beta_3 - 1}{2}$. Denoting this transformed upper bound by $\tilde{c_0}(\delta D_x, u, t)$ and substituting it into $f_2(b,c,n)$, we obtain
\begin{align*}
f_2(b,c,n) &:= c_2(\delta D_x, n)n^2 + c_1(\delta D_x, n)n + c_0(\delta D_x, n) - \binom{\left\lceil \frac{\nu_0}{n_0}n \right\rceil+ 1}{2} \\
&\leq c_2(\delta D_x, u, t)((2k+1)t+u)^2 + c_1(\delta D_x, u, t)((2k+1)t+u) + c_0(\delta D_x, u, t) \\
&- \binom{4kt + 2u + \epsilon + 1}{2} := \tilde{f_2}(b,c,u,t).
\end{align*}
It is straightforward to check that
\[
\frac{\partial}{\partial t} \tilde{f_2}(b,c,u,t) < 0
\]
for each $t \geq 0$ and each $1 \leq u \leq 2k$. Moreover,
\[
\tilde{f_2}(b,c,u,0) < 1
\]
for each $1 \leq u \leq 2k$, which implies
\[
f_2(b,c,n) \leq \tilde{f_2}(b,c,u,t) < 1
\]
for all $n \leq \frac{\beta_2 - \beta_3 - 1}{8s}$, $n$ not a multiple of $n_0=2k+1$. 

Now suppose $n > \frac{\beta_2 - \beta_3 - 1}{8s}$. Using the exact same argument as above with $c_0(\delta D_x, n)$ substituted by its uniform upper bound from Corollary \ref{cor:const-term-bd-nbD1}, we also get
\[
f_2(b,c,n) < 1
\]
for all $n > \frac{\beta_2 - \beta_3 - 1}{8s}$, $n$ not a multiple of $n_0=2k+1$. This completes the proof.
\item $\alpha^{(1)} = 8k^2+4k-1, \beta^{(1)} = 4(2k+1)^2$, $k \geq 1$. By Propositions \ref{prop:nu-of-winning-poly} and \ref{prop:main-part-I}, $\nu(D_0) := \nu((k+1)bD_x) = 2k+1$. 

Write $n = (k+1)t + u$ with $1 \leq u \leq k$ ($u \neq 0$ by assumption). We can write the ceiling function in $f_2(b,c,n)$ as
\[
\left\lceil \frac{2k+1}{k+1}n \right\rceil = (2k+1)t + u + \left\lceil u - \frac{u}{k+1} \right\rceil = (2k+1)t + 2u
\]
for each $u$. This allows us to write the binomial coefficient in $f_2(b,c,n)$ in terms of $t,u$ and $k$. Thus, we can use the exact same argument as in $(1)$ to write $c_i(\delta D_x, n)$, $i=0,1,2$ in terms of $t,u,k$ and $b$, and show that
\[
f_2(b,c,n) < 1
\]
for all $n$ not a multiple of $n_0 = k+1$ in exactly the same way.
\end{enumerate}
Now suppose $\delta = c$. Then, we are looking at wps's $\mathbb{P}(4,b,c)$ with $\frac{b}{-p} \in I$ satisfying $\alpha^{(2)}b + \beta^{(2)}p = -1$. From Proposition \ref{prop:main-part-I}:
\[
c_2(cD_x, n) = \frac{-8+2(3\beta^{(2)}-4\alpha^{(2)})b}{\beta^{(2)} b}, 
c_1(cD_x, n) = \frac{1}{2} + \frac{-4+(3\beta^{(2)}-4\alpha^{(2)})b}{\beta^{(2)} b} + \frac{2}{b}.
\]
Again, consider the two phase transitions $\frac{\beta^{(2)}}{\alpha^{(2)}}$ corresponding to $\delta = c$ separately:
\begin{enumerate}
\item $\alpha^{(2)} = 8k^2 - 4k - 1, \beta^{(2)} = 16k^2, k \geq 2$. By Propositions \ref{prop:nu-of-winning-poly} and \ref{prop:main-part-I}, $\nu(D_0) := \nu(kcD_x) = 2k + 1$. 

By Corollary \ref{cor:const-term-bd-ncD1}, we get a $(2k-1)-$periodic upper bound on $c_0(cD_x,n)$, and we'd like to make use of this periodic upper bound for small $n$'s. Notice that we can write the ceiling function in $f_2(b,c,n)$ as
\[
\left\lceil \frac{2k+1}{k}n \right\rceil = \left\lceil \frac{4k}{2k-1}n - \frac{1}{k(2k-1)}n \right\rceil.
\]
Thus, if we write $4n = (2k - 1)t + u$ for $0 \leq u \leq 2k - 2$ with $4n \leq (2k - 1)^2 \iff n \leq \frac{(2k - 1)^2}{4}$, then
\[
\left\lceil \frac{2k+1}{k}n \right\rceil = kt + \left\lceil \frac{1}{2}u + \frac{1}{4k}u - \frac{1}{4k}t \right\rceil = kt + \frac{1}{2}u + \epsilon
\]
for each $4n \leq (2k - 1)^2$, where
\[
\epsilon = 
\begin{cases}
0 &\text{if $u$ is even, $u \leq t$}, \\
\frac{1}{2} &\text{if $u$ is odd}, \\
\end{cases}
\]
\end{enumerate}
\end{proof}

\[
\rule{8cm}{0.4pt}
\]

\section{Scrap work}

\begin{corollary}
\label{cor:last-two-terms-bd-iter-cruder}
With notation and hypotheses of Corollary \ref{cor:last-two-terms-bd-iter}, let $\mathcal{P}=\{i\mid\sigma_i=1\}$ and $\mathcal{S}=\{i\mid\sigma_i=-1 \text{\ and\ } \beta_{i-1}-\beta_i\leq u_i+\beta_it_i\}$. Then
\begin{align*}
\frac{(u_0+1)(\beta_0-1)}{2\beta_0} - \sum_{j=0}^{u_0}\left\{ \frac{\alpha_0}{\beta_0}j \right\} &\leq \frac{(u_N+1)(\beta_N-1)}{2\beta_N} - \sum_{j=0}^{u_N} \left\{ \frac{\alpha_N j}{\beta_N} \right\}\\
 &+ \frac{|\mathcal{P}|}{2}+|\mathcal{S}|-\sum_{i\in\mathcal{P}}\frac{1}{2\beta_i}+\sum_{i\notin\mathcal{P}\cup\mathcal{S}} \frac{(\beta_{i-1}-\beta_i+3)(\beta_{i-1}-\beta_i-1)}{8\beta_i\beta_{i-1}}
\end{align*}
\end{corollary}

\subsection{Bounds on $c_0(bD_x,n)$ and $c_0(cD_x,n)$}
\label{subsec:c0-bounds-via-alg}

In this subsection we apply Corollary \ref{cor:last-two-terms-bd-iter-cruder} to bound $\frac{(b-1)(r+1)}{2b} - \sum_{j=0}^{r} \left\{ \frac{-p}{b}j \right\}$, and hence $c_0(bD_x,n)$ and $c_0(cD_x,n)$.

\begin{theorem}
\label{thm:const-term-bd-nbD1}
Let $n,b,c\in\ZZ^+$ with $4<b<c$ and $\gcd(4,b,c)=1$. Let $p<0$ be an integer satisfying $4p+3b=c$ and $s=\frac{b}{c}$. If
\[
\alpha_1 b - \beta_1(-p) = 1,
\]
then let $(\alpha_i,\beta_i,\sigma_i)$ be as in Table \ref{table:proof-steps} and let $\epsilon_{i-1,i}:=\epsilon(\sigma_i)$

\[
c_0(bD_x,n) \leq \frac{9}{8}+\frac{1}{32s} + \kappa
\]
where $\alpha_1$, $\beta_1$, and $\kappa$ are given as in Table \ref{table:c0b} and $k\in\ZZ^+$.
\end{theorem}

\begin{table}[h!]
\centering
\begin{tabular}{ | c |  c | c | c | c | }
\hline
Entry & $\alpha_1$ & $\beta_1$ & $\kappa$ & Constraints on $u_i$ \\ \hline
\multirow{2}{*}{$1$} & \multirow{2}{*}{$8k^2+12k+3$} & \multirow{2}{*}{$16(k+1)^2$} & & \\
&&&&\\
\multirow{2}{*}{$2$}  & \multirow{2}{*}{$8k^2+4k-1$} & \multirow{2}{*}{$4(2k+1)^2$} & $2-\sum_{i\in\{1,2,4\}}\frac{1}{2\beta_i}+\frac{(\beta_2-\beta_3+3)(\beta_2-\beta_3-1)}{8\beta_2\beta_3}$ & $u_2<\beta_2-\beta_3$\\
&&& $3-\sum_{i\in\{1,2,4\}}\frac{1}{2\beta_i}$ & $u_2\geq\beta_2-\beta_3$\\
\multirow{2}{*}{$3$} & \multirow{2}{*}{$2k-1$} & \multirow{2}{*}{$4k$} & \multirow{2}{*}{$\frac{3}{2}-\frac{1}{2(2k+1)}-\frac{1}{8k}$} & \\
&&&&\\
\multirow{2}{*}{$4$} &  \multirow{2}{*}{$k$} & \multirow{2}{*}{$2k+1$} &  \multirow{2}{*}{$1-\frac{1}{2(2k+1)}$} & \\
&&&&\\\hline
\end{tabular}
\caption{Bounds on $c_0(bD_x,n)$.}
\label{table:c0b}
\end{table}

\begin{proof}
Letting $r = b\left\{ \frac{\lfloor 4sn \rfloor}{b} \right\}$, we see from Corollary \ref{l:1st-through-4th-terms} and Lemma \ref{l:reduce-to-bounding-rhs} that we need only let $\kappa$ be an upper bound on $\frac{b-1}{2} \{ \frac{4n}{c} \} - \sum_{j=0}^r\{ \frac{-p}{b}j\}$. To obtain such a bound, we apply Corollary \ref{cor:last-two-terms-bd-iter-cruder} with $(\alpha_i,\beta_i,\sigma_i)$ given as in Table \ref{table:proof-steps}.

We give the proof for Entry 1, which is the most complicated of the cases. The other cases are handled similarly. 
\end{proof}

\[
\rule{8cm}{0.4pt}
\]

\begin{table}[h!]
\centering
\begin{tabular}{ | p{4cm} | c | c | c | c | }
\hline
Phase Transition & Sequence of wps & $\alpha_i$ & $\beta_i$ & $\sigma_i$ \\ \hline
\multirow{8}{*}{$\frac{16k^2}{8k^2-4k-1}, k \geq 2$} & \multirow{4}{*}{$\alpha_1 b - \beta_1 (-p) = 1$} & $\alpha_1 = 8k^2-4k-1$ & $\beta_1 = 16k^2$ & $\sigma_1 = 1$ \\
& & $\alpha_2 = 4k^2-4k+1$ & $\beta_2 = 8k^2-4k+1$ & $\sigma_2 = 1$ \\
& & $\alpha_3 = 4k-3$ & $\beta_3 = 8k-2$ & $\sigma_3 = -1$ \\
& & $\alpha_4 = k-1$ & $\beta_4 = 2k-1$ & $\sigma_4 = -1$ \\ \cline{2-5}
& \multirow{4}{*}{$\alpha_1 b - \beta_1 (-p) = -1$} & $\alpha_1 = 8k^2-4k-1$ & $\beta_1 = 16k^2$ & $\sigma_1 = -1$ \\
& & $\alpha_2 = 4k^2-2$ & $\beta_2 = 8k^2+4k-1$ & $\sigma_2 = -1$  \\
& & $\alpha_3 = 4k-3$ & $\beta_3 = 8k-2$ & $\sigma_3 = -1$ \\
& & $\alpha_4 = k-1$ & $\beta_4 = 2k-1$ & $\sigma_4 = -1$ \\ \hline
\multirow{8}{*}{$\frac{4(2k+1)^2}{8k^2+4k-1}, k \geq 1$} & \multirow{4}{*}{$\alpha_1 b - \beta_1 (-p) = 1$} & $\alpha_1 = 8k^2+4k-1$ & $\beta_1 = 4(2k+1)^2$ & $\sigma_1 = 1$ \\
& & $\alpha_2 = 4k^2$ & $\beta_2 = 8k^2+4k+1$ & $\sigma_2 = 1$ \\
& & $\alpha_3 = 4k-1$ & $\beta_3 = 8k+2$ & $\sigma_3 = -1$ \\
& & $\alpha_4 = k$ & $\beta_4 = 2k+1$ & $\sigma_4 = 1$ \\ \cline{2-5}
& \multirow{4}{*}{$\alpha_1 b - \beta_1 (-p) = -1$} & $\alpha_1 = 8k^2+4k-1$ & $\beta_1 = 4(2k+1)^2$ & $\sigma_1 = -1$ \\
& & $\alpha_2 = 4k^2+4k-1$ & $\beta_2 =8k^2+12k+3$ & $\sigma_2 = -1$ \\
& & $\alpha_3 = 4k-1$ & $\beta_3 = 8k+2$ & $\sigma_3 = -1$ \\
& & $\alpha_4 = k$ & $\beta_4 = 2k+1$ & $\sigma_4 = 1$ \\ \hline
\multirow{4}{*}{$\frac{4k}{2k-1}, k \geq 1$} & \multirow{2}{*}{$\alpha_1 b - \beta_1 (-p) = 1$} & $\alpha_1 = 2k-1$ & $\beta_1 = 4k$ & $\sigma_1 = 1$  \\
& & $\alpha_2 = k$ & $\beta_2 = 2k+1$ & $\sigma_2 = 1$ \\ \cline{2-5}
& \multirow{2}{*}{$\alpha_1 b - \beta_1 (-p) = -1$} & $\alpha_1 = 2k-1$ & $\beta_1 = 4k$ & $\sigma_1 = -1$  \\
& & $\alpha_2 = k$ & $\beta_2 = 2k+1$ & $\sigma_2 = 1$ \\ \hline
\multirow{2}{*}{$\frac{2k+1}{k}, k \geq 1$} & $\alpha_1 b - \beta_1 (-p) = 1$ & $\alpha_1 = k$ & $\beta_1 = 2k+1$ & $\sigma_1 = 1$ \\ \cline{2-5}
& $\alpha_1 b - \beta_1 (-p) = -1$ & $\alpha_1 = k$ & $\beta_1 = 2k+1$ & $\sigma_1 = -1$ \\ \hline
\end{tabular}
\caption{Values of $\alpha_i$ and $\beta_i$ used to simplify $\sum_{j=0}^r\{\frac{-pj}{b}\}$ via Corollary \ref{cor:last-two-terms-bd-iter}.}
\label{table:last-two-terms-bd-iter-steps-old}
\end{table}

\begin{corollary}
\label{cor:const-term-bd-nbD1}
Suppose that $b,p$ coming from $\mathbb{P}(4,b,c)$ with $p < 0$ satisfy
\[
\alpha_1 b - \beta_1(-p) = 1.
\]
Let $r = b\left\{ \frac{\lfloor 4sn \rfloor}{b} \right\} = t_1\beta_1 + u_1 = t_1\beta_1 +t_2\beta_2 +u_2=\dots=\sum_{i=1}^N t_i\beta_i + u_N$, with $\beta_i$'s being respective values from Table \ref{table:last-two-terms-bd-iter-steps} and $u_i \equiv \beta_i, 0 \leq u_i \leq \beta_i - 1$ for each $i$. Then,
\begin{enumerate}
\item For $\alpha_1 = 8k^2-4k-1, \beta_1 = 16k^2, k \geq 2$, 
\[
c_0(bD_x,n) \leq 
\begin{cases}
- \frac{1}{2\beta_2\beta_3}r^2 + \left( \frac{1}{2\beta_3} - \frac{1}{2\beta_2} \right)r + \frac{17}{8} + \frac{\beta_2+2}{8\beta_1} + \frac{1}{32s}  &\text{if } r \leq \frac{\beta_2 - \beta_3 - 1}{2}\\
\frac{\beta_2}{8\beta_3} + \frac{25}{8} + \frac{1}{32s} - \frac{1}{2\beta} - \frac{1}{2\beta_2} &\text{if } r \leq b - 1.
\end{cases}
\]
\item For $\alpha_1 = 8k^2+4k-1, \beta_1 = 4(2k+1)^2, k \geq 1$,
\[
c_0(bD_x,n) \leq 
\begin{cases}
-\frac{1}{2\beta_2\beta_3}r^2 + \left( \frac{1}{2\beta_3}-\frac{1}{2\beta_2} \right)r + \frac{15}{8} + \frac{1}{32s} + \frac{k}{2(4k+1)} + \frac{1}{2(2k+1)} + \frac{\beta_2+2}{8\beta_1} &\text{if } r \leq \frac{\beta_2 - \beta_3 - 1}{2}\\
\frac{\beta_2}{8\beta_3} + \frac{25}{8} + \frac{1}{32s} - \frac{1}{2\beta_1} - \frac{1}{2\beta_2} + \frac{k}{2(4k+1)} + \frac{1}{2(2k+1)} &\text{if }r \leq b - 1.
\end{cases}
\]
\item For $\alpha_1 = 2k - 1$, $\beta_1 = 4k, k \geq 1$,
\[
c_0(bD_x,n) \leq 
\begin{cases}
\frac{u_1^2-(2k-1)u_1}{8k} + \frac{15}{8} + \frac{1}{32s} &\text{if }n < \frac{k}{2} \implies r=u_1 = \lfloor 4sn \rfloor < 2k, \\
\leq \frac{15}{8} + \frac{1}{32s} &\text{if }r < b - 1.
\end{cases}
\]
\item For $\alpha_1 = k, \beta_1 = 2k + 1, k \geq 1$,
\[
c_0(bD_x,n) \leq 
\begin{cases}
1 + \frac{3}{2k+1} - \frac{1}{2b} &\text{if }n \leq \frac{k}{2}(1-\frac{1}{40s}) \implies r = u_1 = \lfloor 4sn \rfloor < 2k, \\
\frac{13}{8} + \frac{1}{2(2k+1)} + \frac{1}{32s} - \frac{1}{2b} &\text{if }r \leq b - 1.
\end{cases}
\]
\end{enumerate}
\end{corollary}
\begin{proof}
Observe that the bounds obtained in $(1)$ and $(2)$ differ only by some constants. We will prove $(1)$ in its entirety, and the proof of $(2)$ will be exactly the same with different constants in intermediate steps.
\\
Suppose $\alpha_1 = 8k^2-4k-1, \beta_1 = 16k^2, k \geq 2$ and let $\alpha_2,\alpha_3,\beta_2,\beta_3$ be respective values in Table \ref{table:last-two-terms-bd-iter-steps}. Apply Corollary \ref{cor:last-two-terms-bd-iter} to bound the last two terms of $c_0(bD_x,n)$:
\begin{align*}
\frac{b-1}{2} \left\{ \frac{4n}{c} \right\} - \sum_{j=1}^{r}\left\{ \frac{-p}{b}j \right\} 
&\leq \frac{(u_3+1)(\beta_3-1)}{2\beta_3} - \sum_{j=0}^{u_3}\left\{ \frac{\alpha_3}{\beta_3}j \right\} + \epsilon'(1,u, b,\beta_1) + \epsilon'(1,u_2,\beta_1,\beta_2) \\
&+ \epsilon''(-1,t_3,u_3,\beta_2,\beta_3) + \epsilon'(-1,u_3,\beta_2,\beta_3) \\
&\leq 1 - \frac{u_3+1}{2\beta_3} + \epsilon'(1,u, b,\beta_1) + \epsilon'(1,u_2,\beta_1,\beta_2) + \epsilon''(-1,t_3,u_3,\beta_2,\beta_3) \\
&+ \epsilon'(-1,u_3,\beta_2,\beta_3).
\end{align*}
with the last line given by Corollary \ref{cor:sum-frac-parts-upper-bound}.
\\
Note that $\epsilon''(-1,t_3,u_3,\beta_2,\beta_3)$ is the main error term, and from Lemma \ref{l:frac-part-sums-via-remainder-v2}, $h$ is such that
\[
\frac{\partial}{\partial t_3} \epsilon''(-1,t_3,u_3,\beta_2,\beta_3) \geq 0
\]
for $t_3 \leq \frac{\beta_2 - 1 - \beta_3 -2u_3}{2\beta_3} \iff u_2 = t_3\beta_3 + u_3 \leq \frac{\beta_2 -  \beta_3 - 1}{2}$, so $\epsilon''(-1,t_3,u_3,\beta_2,\beta_3) \geq 0$ on this range. Thus,
\begin{align*}
\epsilon''(-1,t_3,u_3,\beta_2,\beta_3) &= \frac{-\beta_3}{2\beta_2}\lfloor \frac{u_2}{\beta_3} \rfloor^2 +\frac{\beta_2-\beta_3-1 - 2u_3}{2\beta_2}\lfloor \frac{u_2}{\beta_3} \rfloor \\
&\leq \frac{-1}{2\beta_2\beta_3}u_2^2 + \left( \frac{1}{2\beta_3} - \frac{1}{2\beta_2} \right)u_2
\end{align*}
whenever $u_2 \leq \frac{\beta_2 -  (8k-1)}{2}$.
\\
Suppose $r \leq \frac{\beta_2 -  (8k-1)}{2}$, so that $r = u_1 = u_2$ and $t_1 = t_2 = 0$. We can maximize $\epsilon'(1,u_1,b,\beta_1)$, $\epsilon'(1,u_2,\beta_1,\beta_2)$ at $u_1 = u_2 = \frac{\beta_2-1-\beta_3}{2} < \frac{\beta_2}{2}$:
\begin{align*}
\epsilon'(1,u_1,b,\beta_1) &\leq \epsilon'(1,\frac{\beta_2}{2},b,\beta_1) 
= \frac{\beta_2}{4\beta_1} + \frac{\beta_2^2+4\beta_2}{8b\beta_1} - \frac{\beta_2}{4b} + \frac{1}{2\beta_1} - \frac{1}{2b} \\
&\leq \frac{\beta_2}{4\beta_1} + \frac{1}{2\beta_1} - \frac{1}{2b}, \\
\epsilon'(1,u_2,\beta_1,\beta_2) &\leq \epsilon'(1,\frac{\beta_2}{2},\beta_1,\beta_2) 
= \frac{1}{4} - \frac{\beta_2 - 2}{8\beta_1} + \frac{1}{2\beta_2} - \frac{1}{2\beta_1}.
\end{align*}
Substituting $r=u=u_2$ and using the above, we get
\begin{align*}
\frac{b-1}{2} \left\{ \frac{4n}{c} \right\} - \sum_{j=1}^{r}\left\{ \frac{-p}{b}j \right\} 
&\leq \frac{-1}{2\beta_2\beta_3}r^2 + \left( \frac{1}{2\beta_3} - \frac{1}{2\beta_2} \right)r + \frac{\beta_2+2}{8\beta_1} + \frac{5}{4}+ \frac{u_3^2+(-\beta_3+1)u_3}{2\beta_2\beta_3} \\
&\leq \frac{-1}{2\beta_2\beta_3}r^2 + \left( \frac{1}{2\beta_3} - \frac{1}{2\beta_2} \right)r + \frac{5}{4} + \frac{\beta_2+2}{8\beta_1},
\end{align*}
after doing some basic algebra. We may obtain the desired upper bound for $c_0(bD_x,n)$ after applying Corollary \ref{l:1st-through-4th-terms}.
\\
Now, suppose $r > \frac{\beta_2-\beta_3-1}{2}$. In this case, we can maximize $h_-$ using Lemma \ref{l:frac-part-sums-via-remainder-v2}, which states that $h_-$ is maximized at $t_3 =  \frac{\beta_2 - 1 - \beta_3 -2u_3}{2\beta_3}$:
\[
\epsilon''(-1,t_3,u_3,\beta_2,\beta_3) \leq \frac{(\beta_2-\beta_3-1-2u_3)^2}{8\beta_2\beta_3} \leq \frac{\beta_2}{8\beta_3}.
\]
Also, the maximum of $\epsilon'(1,u, b,\beta_1)$, $\epsilon'(1,u_2,\beta_1,\beta_2)$ are given by Remark \ref{rmk:last-two-terms-bound-iter}:
\[
\epsilon'(1,u, b,\beta_1) \leq \frac{1}{2} - \frac{1}{2b}, 
\epsilon'(1,u_2,\beta_1,\beta_2) \leq \frac{1}{2} - \frac{1}{2\beta_1}.
\]
Thus, for all $r < \beta - 1$,
\[
\frac{b-1}{2} \left\{ \frac{4n}{c} \right\} - \sum_{j=1}^{r}\left\{ \frac{-p}{b}j \right\} 
\leq \frac{\beta_2}{8\beta_3} + 2 - \frac{1}{2\beta_1} - \frac{1}{2\beta_2},
\]
and we obtain the desired bound of $c_0(bD_x,n)$ after applying Corollary \ref{l:1st-through-4th-terms}.
\\
For Case $(2)$, we can use the exact same proof as Case $(1)$ to calculate the desired bounds, noting that the only difference between these cases is the bound on $\frac{(u_3+1)(\beta_3-1)}{2\beta_3} - \sum_{j=0}^{u_3} \left\{ \frac{\alpha_3}{\beta_3}j \right\}$ given by Corollary \ref{cor:sum-frac-parts-upper-bound}. 
\\
For Case $(3)$, the general case (the bound valid for all $r < b - 1$) is obtained after applying Corollary \ref{cor:last-two-terms-bd-iter} and Corollary \ref{cor:sum-frac-parts-upper-bound}:
\begin{align*}
\frac{b-1}{2} \left\{ \frac{4n}{c} \right\} - \sum_{j=1}^{r}\left\{ \frac{-p}{b}j \right\}
&\leq \frac{(u_1+1)(4k-1)}{8k} - \sum_{j=0}^{u_1} \left\{ \frac{2k-1}{4k}j \right\} + \epsilon''(1,t_1,u_1,b,4k) + \epsilon'(1,u_1,b,4k) \\
&\leq \frac{u_1}{2} - \sum_{j=0}^{u_1} \left\{ \frac{2k-1}{4k}j \right\} + \frac{u_1^2+(-4k+1)u_1}{8kb}  + \frac{1}{2}  \\
&\leq \frac{u_1}{2} - \sum_{j=0}^{u_1} \left\{ \frac{2k-1}{4k}j \right\} + \frac{1}{2} \leq \frac{3}{4}.
\end{align*}
Then, the more refined bound (valid for $n < \frac{k}{2}$) can be obtained by applying Corollary \ref{cor:last-two-terms-bd-iter} and Lemma \ref{l:sum-frac-parts-k-2k+1-bds}, given $u_1 = r = \lfloor 4sn \rfloor < 2sk < 2k$ and $t_1 = 0$:
\[
\frac{u_1}{2} - \sum_{j=0}^{u_1} \left\{ \frac{2k-1}{4k}j \right\} \leq \frac{u_1^2-(2k-1)u_1}{8k} + \frac{1}{4}.
\]
\\
For Case $(4)$: We can apply Corollary \ref{cor:last-two-terms-bd-iter} and Lemma \ref{l:sum-frac-parts-k-2k+1-bds} to obtain the general upper bound. The more refined upper bound is obtained by noting that for $n \leq \frac{k}{2}(1 - \frac{1}{40s}) \implies \lfloor 4sn \rfloor < 2k+1$, $r = \lfloor 4sn \rfloor$ and $u_1 = r = \lfloor 4sn \rfloor$. Notice that $\lfloor 4s \rfloor \leq u_1 \leq \lfloor 2sk \rfloor \implies 2 \leq u_1 \leq 2k - 2$. Thus, 
\[
\frac{u_1}{2} - \sum_{j=0}^{u_1} \left\{ \frac{k}{2k+1}j \right\} = 
\begin{cases}
\frac{u_1(u_1-2k)}{4(2k+1)} + \frac{1}{4} \leq -\frac{1}{2} + \frac{3}{2k+1} &\text{if $u_1$ is odd,} \\
\frac{u_1(u_1-2k)}{4(2k+1)} \leq -\frac{1}{2} + \frac{3}{2(2k+1)} &\text{if $u_1$ is even.}
\end{cases}
\]
Substituting these bounds in the place of Lemma \ref{l:sum-frac-parts-k-2k+1-bds}, and using the special case for $n \leq \frac{k}{2}(1-\frac{1}{40s}) \implies \{ sn \} \geq \frac{1}{2} + \frac{1}{80s}$ in Corollary \ref{l:1st-through-4th-terms}, we obtain the desired result.
\end{proof}

Similarly, we can obtain upper bounds for the constant term $c_0(cD_x, n)$ for each of the sequence $\alpha_1 b - \beta_1 (-p) = - 1$ listed in Table \ref{table:last-two-terms-bd-iter-steps}.
\begin{corollary}
\label{cor:const-term-bd-ncD1}
Suppose that $b,p$ coming from $\mathbb{P}(4,b,c)$ with $p < 0$ satisfy
\[
\alpha_1 b - \beta_1(-p) = -1.
\]
Let $r = b\left\{ \frac{\lfloor 4sn \rfloor}{b} \right\} = t_1\beta_1 + u_1 = t_1\beta_1 +t_2\beta_2 +u_2=\dots=\sum_{i=1}^N t_i\beta_i + u_N$, with $\beta_i$'s being respective values from Table \ref{table:last-two-terms-bd-iter-steps} and $u_i \equiv \beta_i, 0 \leq u_i \leq \beta_i - 1$ for each $i$. Then,
\begin{enumerate}
\item For $\alpha_1 = 8k^2-4k-1, \beta_1 = 16k^2, k \geq 2$, 
\[
c_0(cD_x,n) 
\leq \frac{3}{2} - \frac{r}{b} +  \frac{k-1}{2(4k-1)}  + \frac{-u_4^2+(2k-2)u_4}{4(2k-1)} + \frac{\beta_1}{b} + \frac{\beta_2}{\beta_1} + \frac{\beta_3}{\beta_2}.
\]
\item For $\alpha_1 = 8k^2+4k-1, \beta_1 = 4(2k+1)^2, k \geq 1$,
\[
c_0(cD_x,n) \leq  \frac{3}{2} - \frac{r}{b}  + \frac{8k-2}{2(8k+2)} + \frac{-u_4^2+2ku_4}{4(2k+1)} + \frac{\beta_1}{b} + \frac{\beta_2}{\beta_1} + \frac{\beta_3}{\beta_2}.
\]
Moreover, we can refine this upper bound for $r \leq \beta_3 - 1 \implies t_1 = t_2 = t_3 = 0$, $t_4 = 0, 1, 2$:
\[
c_0(cD_x,n) \leq
\begin{cases}
1 + \frac{-u_4^2+(\beta_4-1)u_4}{4\beta_4} + \frac{2k+1}{8\beta_2} \text{ if }t_4 = 0, \\
\frac{3}{2} - \frac{k+1}{8k+2} + \frac{-u_4^2+(\beta_4-1)u_4}{4\beta_4} - \frac{u_4}{\beta_3} + \frac{4k+1}{\beta_2} \text{ if }t_4 = 1, \\
2 - \frac{2k+1}{4k+1} + \frac{-u_4^2+(\beta_4-1)u_4}{4\beta_4} - \frac{2u_4}{\beta_3} + \frac{9k+3}{2\beta_2} \text{ if }t_4 = 2.
\end{cases}
\]
\item For $\alpha_1 = 2k - 1$, $\beta_1 = 4k, k \geq 1$,
\[
c_0(cD_x,n) \leq 
\begin{cases}
1 - \frac{r}{b} &\text{if } r \equiv u_1 \equiv 0 \mod 4k, \\
\frac{7}{4} + \frac{-u_2^2 + 2ku_2}{4(2k+1)} - \frac{1}{4k} + \frac{4k-r}{b}  &\text{if }r < b - 1.
\end{cases}
\]
\item For $\alpha_1 = k, \beta_1 = 2k + 1, k \geq 1$,
\[
c_0(cD_x,n) 
\leq \left( \frac{-1}{4(2k+1)} + \frac{1}{2b(2k+1)} \right)u_1^2 + \left( \frac{1}{4} + \frac{1}{4(2k+1)} + \frac{1}{2b} \right)u_1 + 1 - \frac{r}{b}.
\]
\end{enumerate}
\end{corollary}
\begin{proof}

Notice that Cases $(1)$ and $(2)$ differ only by a constant. We will prove $(1)$ in its entirety, and $(2)$ will follow directly from the proof of $(1)$. We can apply Corollary \ref{cor:last-two-terms-bd-iter} to simplify the expression of $c_0(cD_x,n)$:
\begin{align}
\label{eqn:const-term-bd-ncD1-eqn1}
c_0(cD_x,n) 
&= 1 - \frac{r}{b} + \frac{b-1}{2b} - \left( \frac{(u_3+1)(\beta_3-1)}{2\beta_3} - \sum_{j=0}^{u_3} \left\{ \frac{\alpha_3}{\beta_3}j \right\} \right) -\epsilon''(-1,t_1,u_1,\beta_0,\beta_1) \notag \\
&-\epsilon''(-1,t_2,u_2,\beta_1,\beta_2)-\epsilon''(-1,t_3,u_3,\beta_2,\beta_3)-\epsilon'(-1,u_1,\beta_0,\beta_1)-\epsilon'(-1,u_2,\beta_1,\beta_2)-\epsilon'(-1,u_3,\beta_2,\beta_3). 
\end{align}
From Lemma \ref{l:frac-part-sums-via-remainder-v2}:
\[
-\epsilon''(-1,t_i,u_i,\beta_{i-1},\beta_i) \leq \frac{(\beta_{i-1}-1)u_i}{2\beta_{i-1}\beta_i}
\]
for each $i$, so
\begin{align*}
c_0(cD_x,n) 
&\leq 1 - \frac{r}{b} + \frac{u_3}{2\beta_3} + \sum_{j=0}^{u_3} \left\{ \frac{\alpha_3}{\beta_3}j \right\}  - \frac{u_3}{2} + \frac{u_1^2+\beta_1 u_1}{2b\beta_1} + \frac{u_2^2+\beta_2 u_2}{2\beta_1\beta_2} + \frac{u_3^2+\beta_3u_3}{2\beta_2\beta_3} \\
&\leq 1 - \frac{r}{b} + \frac{1}{2} + \sum_{j=0}^{u_3} \left\{ \frac{\alpha_3}{\beta_3}j \right\}  - \frac{u_3}{2} + \frac{\beta_1}{b} + \frac{\beta_2}{\beta_1} + \frac{\beta_3}{\beta_2}, 
\end{align*}
where we maximized $\frac{u_i^2+\beta_iu_i}{2\beta_{i-1}\beta_i}$ at $u_i = \beta_i$. Finally, applying the upper bound on $\sum_{j=0}^{u_3} \left\{ \frac{\alpha_3}{\beta_3}j \right\}  - \frac{u_3}{2}$:
\[
c_0(cD_x,n) \leq \frac{3}{2} - \frac{r}{b} +  \frac{k-1}{2(4k-1)}  - \frac{u_4(u_4-2k+2)}{4(2k-1)} + \frac{\beta_1}{b} + \frac{\beta_2}{\beta_1} + \frac{\beta_3}{\beta_2}
\]
where $u_4 \equiv u_3 \mod (2k-1)$. 
\\
For Case $(2)$, we can apply the exact same proof as in Case $(1)$, noting that the only difference between the cases is in the upper bound on $\sum_{j=0}^{u_3} \left\{ \frac{\alpha_3}{\beta_3}j \right\}  - \frac{u_3}{2}$ given by Corollary \ref{cor:sum-frac-parts-upper-bound}. Moreover, we obtain the more refined bounds by substituting $t_1=t_2=t_3=0$ into the $h_-$'s:
\[
\epsilon''(-1,0,u_i,\beta_{i-1},\beta_i) = 0.
\]
Then we have that
\[
c_0(cD_x,n) = 1 - \frac{u_3}{2b} + \sum_{j=0}^r \left\{ \frac{\alpha_3}{\beta_3}j \right\} - \frac{u_3}{2} + \left( \frac{1}{2b\beta_1} + \frac{1}{2\beta_1\beta_2} + \frac{1}{2\beta_2\beta_3} \right)(u_3^2 + u_3),
\]
from Equation \ref{eqn:const-term-bd-ncD1-eqn1}. The desired bounds are obtained by substituting $u_3=t_4\beta_4+u_4$ for $t_4 = 0, 1, 2$ and applying Lemma \ref{l:sum-frac-parts-k-2k+1-bds}.
\\
For Case $(3)$, again, apply Corollary \ref{cor:last-two-terms-bd-iter} to get:
\begin{align*}
c_0(cD_x,n) 
&= 1 - \frac{r}{b} + \frac{b-1}{2b} - \frac{(u_1+1)(4k-1)}{2(4k)} + \sum_{j=0}^{u_1} \left\{ \frac{2k-1}{4k}j \right\} - \epsilon''(-1,t_1,u_1,b,4k) - \epsilon'(-1,u_1,b,4k) \\
&\leq 1 - \frac{r}{b} + \sum_{j=0}^{u_1} \left\{ \frac{2k-1}{4k}j \right\} - \frac{u_1}{2}  + \frac{u_1^2 + (b+4k)u_1}{2b(4k)},
\end{align*}
given $- \epsilon''(-1,t_1,u_1,b,4k) \leq \frac{(b-1)u_1}{2b(4k)}$. Then, the general bound is obtained via Corollary \ref{cor:sum-frac-parts-upper-bound}, and the refined bound is obtained by setting $u_1 = 0$.
\\
For Case $(4)$, we can obtain the desired bound directly by applying Corollary \ref{cor:last-two-terms-bd-iter} and Lemma \ref{l:sum-frac-parts-k-2k+1-bds}.
\end{proof}

\subsection{Proof of Conjecture \ref{c:P4bc-conjs}}

First, we consider wps's $\mathbb{P}(4,b,c)$ with $\frac{b}{-p} \in (\frac{16k^2}{8k^2-4k-1},\frac{2k-1}{k-1}), k \geq 2$. We work with the sequences
\[
(8k^2-4k-1)b + 16k^2p = 1
\]
with $k\geq2$ and we must show $\frac{2k+1}{c}H$ is the winning polytope, i.e.~$\gamma_{\expected}$ for $n\geq 2k+1$ is at most $\gamma_{\expected}(2k+1)$. The $\nu$-value for $n=2k+1$ is $4k$. In other words, our goal is to show the constant term $c_0$ is such that whenever $n>2k+1$, we have
\[
c_2n^2+c_1n+c_0<{\lceil\frac{4k}{2k+1}n\rceil+1\choose 2} + 1,
\]
where $c_2=2s$ and $c_1=\frac{1}{2}(s + 1 + \frac{4}{c})$.

Solving for $p$ in terms of $b$, we have
\[
p=\frac{1-(8k^2-4k-1)b}{16k^2}.
\]
Then from
\[
c=4p+3b=\frac{1 + b (1 + 2 k)^2}{4 k^2},
\]
we have $1=\frac{1}{4k^2}\frac{1}{c}+\frac{(1 + 2 k)^2}{4 k^2}s$, and so
\[
\frac{1}{c}=4k^2-(1+2k)^2s.
\]
In particular, $c_1=\frac{1}{2}((1+16k^2)+(1-4(1+2k)^2)s)$. Note also that from our expression for $c$ in terms of $b$, we have
\[
s=\frac{4 k^2}{(1 + 2 k)^2 + \frac{1}{b}}.
\]
Now let $n = (2k+1)t + u$ with $0\leq u\leq 2k$. Then we can write ${\lceil\frac{4k}{2k+1}n\rceil+1\choose 2}=\frac{1}{2}(1+4kt+\lceil\frac{4k}{2k+1}u\rceil)(4kt+\lceil\frac{4k}{2k+1}u\rceil)$ as a quadratic in $t$. Then let
\[
f(s,u,t):=2sn^2 + \frac{1}{2}(1+s+4(4k^2-(1+2k)^2s))n + C - {\lceil\frac{4k}{2k+1}n\rceil+1\choose 2}.
\]
Expanding we obtain a quadratic in $t$
\[
f(s,u,t) = \alpha(s,u)t^2 + \beta(s,u)t + \gamma(u).
\]
Since $C$ is independent of $n$, we see $\gamma(u)$ is independent of $t$. Note that
\[
\lceil\frac{4k}{2k+1}n\rceil=4kt+\lceil\frac{4k}{2k+1}u\rceil=
\begin{cases}
4kt+2u, & 0\leq u\leq k\\
4kt+2u-1, & k+1\leq u\leq 2k
\end{cases}
\]
Let $\epsilon=0$ in the former case and $\epsilon=-1$ in the latter case. 
Expand the binomial coefficient in terms of $t$ and $u$:
\[
{\lceil\frac{4k}{2k+1}n\rceil+1\choose 2} = {4kt+2u+\epsilon+1\choose 2} = 8k^2t^2+2u^2+(8ku+4k\epsilon+2k)t+(2\epsilon+1)u+\frac{1}{2}(\epsilon+1)\epsilon,
\]
where $\frac{1}{2}(\epsilon+1)\epsilon=0$ by its definition. Also, expand $c_2n^2$ and $c_1n$ in terms of $t$, $u$:
\begin{align*}
c_2n^2 &= 2s(2k+1)^2t^2+2su^2+2(2k+1)ut = (8k^2-\frac{2}{b}s)t^2+2su^2+2(2k+1)ut, \\
c_1n &= \frac{1}{2}(1+\frac{4}{c}+s)(2k+1)t+\frac{1}{2}(1+\frac{4}{c}+s)u.
\end{align*}
This gives us
\begin{align*}
f(s,u,t) &= -\frac{2}{c}t^2 - 2(1-s)u^2 + \left( -4ku+2u-4k\epsilon-k+k(\frac{4}{c}+s)+\frac{1}{2}(1+\frac{4}{c}+s) \right)t \\
&+ \left( \frac{1}{2}(\frac{4}{c}+s)-2\epsilon-\frac{1}{2} \right)u + c_0.
\end{align*}
First, suppose $n < \frac{b}{4} < \frac{c}{4}$ is such that $r := b\left\{ \frac{\lfloor 4sn \rfloor}{b} \right\} = \lfloor 4sn \rfloor \leq \frac{\beta_2-(8k-1)}{2}=4k^2-6k+1$, i.e., $n \leq \frac{4k^2-6k+1}{4s} \leq \frac{4k^2 -6k+1}{4}$. Using Corollary \ref{cor:const-term-bd-nbD1}, we get:
\begin{align*}
c_0 &\leq \left( \frac{1}{2b\beta} + \frac{1}{2\beta\beta_2} - \frac{1}{2\beta_2(8k-2)}\right)r^2 + \left( \frac{1}{2b\beta} + \frac{1}{2\beta\beta_2} + \frac{1}{2(8k-2)} - \frac{1}{2b} - \frac{1}{2\beta_2(8k-2)} \right)r + \frac{17}{8} - \frac{1}{2b} \\
&\leq 16s^2\left( \frac{1}{2b\beta} + \frac{1}{2\beta\beta_2} - \frac{1}{2\beta_2(8k-2)}\right)n^2 + 4s\left( \frac{1}{2b\beta} + \frac{1}{2\beta\beta_2} + \frac{1}{2(8k-2)} - \frac{1}{2b} - \frac{1}{2\beta_2(8k-2)} \right)n \\
&+ \frac{17}{8} - \frac{1}{2b}
\end{align*}
where we get an upper bound by dropping the floor function in $r = \lfloor 4sn \rfloor$ since the quadratic in $r$ is increasing on $r \leq \frac{\beta_2-(8k-1)}{2}$.
Denote
\[
c_{0,2} = 16s^2\left(\frac{1}{2b\beta} + \frac{1}{2\beta\beta_2} - \frac{1}{2\beta_2(8k-2)}\right),
c_{0,1} = 4s\left(\frac{1}{2b\beta} + \frac{1}{2\beta\beta_2} + \frac{1}{2(8k-2)} - \frac{1}{2b} - \frac{1}{2\beta_2(8k-2)}\right)
\]
so that
\[
c_0 \leq (2k+1)^2c_{0,2}t^2+c_{0,2}u^2 + 2(2k+1)c_{0,2}ut + (2k+1)c_{0,1}t + c_{0,1}u + \frac{17}{8} - \frac{1}{2b}.
\]

Note that $c_{0,2} < 0$, $c_{0,1} > 0$. Viewing $u$ as a constant, one checks that
\[
c_0 \leq \frac{1}{2b}+\frac{1}{\beta_2} + \frac{19}{8}
\]
if we fix $t=1$. Thus, one can show that
\[
f(s,u,1) < 0
\]
for each $u \neq 0$. Moreover, one can also show that
\[
\frac{\partial}{\partial t}f(s,u,t) < 0
\]
for all $t \geq 1$, $u \neq 0$. This implies
\[
f(s,u,t) < 0
\]
for each $n < 4k^2 - 6k + 1$ such that $2k+1$ does not divide $n$. Similarly, one checks that $f(s,u,t) < 0$ for all $n > 4k^2 - 6k + 1$. 
\\
For the case $n = (2k+1)t$, i.e., $u=0$: $\lceil \frac{4k}{2k+1}n \rceil = \frac{4k}{2k+1}n$. Notice that as long as
\[
|nbD_x|:=2sn^2 + \frac{1}{2}(1+s+4(4k^2-(1+2k)^2s))n + c_0 < {\frac{4k}{2k+1}n+2\choose 2}
\]
in this case, we would have
\[
\nu(|nbD_x|) := max \left\{ d \in \mathbb{Z}^+ \mid \binom{d+1}{2} < |nbD_x| \right\}  \leq \frac{4k}{2k+1}n,
\]
which is exactly what we require. Thus, we may define
\[
f_1(s,u,t) := f_1(s,0,t) = 2sn^2 + \frac{1}{2}(1+s+4(4k^2-(1+2k)^2s))n + c_0 - {\frac{4k}{2k+1}n+2\choose 2}
\]
for the case $u=0$. One checks that $f_1(s,u,t) < 0$ for all $t \geq 1$ as well. This concludes the proof.

Next, we work with wps's on this range satisfying
\[
kb + (2k+1)p = -1, k \geq 1.
\]
(Re-index the $k$'s for convenience, so that the wps's satisfy $\frac{b}{-p} \in (\frac{16(k+1)^2}{8(k+1)^2-4(k+1)-1},\frac{2k+1}{k})$.) In terms of
\[
\gamma_{\expected}(H) := max_{D \in S}\frac{[H]}{[D]}\nu(D),
\]
we need to prove
\begin{align*}
\frac{[H]}{[ncD_x]}\nu(ncD_x) = \frac{b}{n}\nu(ncD_x) 
&< \frac{[H]}{[(2k+3)bD_x]}\nu((2k+3)bD_x) = \frac{4(k+1)}{2k+3}c, \\
\implies \nu(ncD_x) &< \frac{4(k+1)}{2k+3}\frac{c}{b}n := \frac{4(k+1)}{2k+3}\frac{1}{s}n,
\end{align*}
denoting $s=\frac{b}{c}$. We can find $s$ as follows:
\begin{align*}
4p+3b &= c \implies \frac{1}{s} = \frac{c}{b} = 4\frac{p}{b} + 3, \\
kb + (2k+1)p &= -1 \implies \frac{p}{b} = \frac{1}{2k+1} \left( \frac{-1}{b} - k \right), \\
&\implies \frac{1}{s} = \frac{2k+3}{2k+1} - \frac{4}{(2k+1)b}.
\end{align*}
Hence, in terms of the Ehrhart polynomial of $cD_x$, this is equivalent to proving
\[
|ncD_x| < \binom{\lceil \frac{4(k+1)}{2k+1}n - \frac{16(k+1)}{(2k+1)(2k+3)b}n \rceil + 1}{2} + 1.
\]

We will consider cases where $n$ large and $n$ small separately. This is because for $n < \frac{(2k+3)b}{16(k+1)}$, we can simplify the binomial coefficient as follows:
\[
\left\lceil \frac{4(k+1)}{2k+1}n - \frac{16(k+1)}{(2k+1)(2k+3)b}n \right\rceil = \left\lceil \frac{4(k+1)}{2k+1}n \right\rceil
\]
\textbf{Step I: $n \geq \frac{(2k+3)b}{16(k+1)}$}
\\
We will simply use the (much) cruder estimate
\[
\binom{\frac{4(k+1)}{2k+1}n - \frac{16(k+1)}{(2k+1)(2k+3)b}n + 1}{2} \leq \binom{\lceil \frac{4(k+1)}{2k+1}n - \frac{16(k+1)}{(2k+1)(2k+3)b}n \rceil + 1}{2}.
\]
Thus, for $n \geq \frac{(2k+3)b}{16(k+1)}$, define
\begin{align*}
g(b,c,n) &= |ncD_x| - \binom{\frac{4(k+1)}{2k+1}n - \frac{16(k+1)}{(2k+1)(2k+3)b}n + 1}{2} \\
&= \left( \frac{-2}{(2k+1)^2} + \frac{32(k+1)^2+8}{(2k+1)^2(2k+3)b} + \frac{128(k+1)^2}{(2k+1)^2(2k+3)^2b^2} \right)n^2 \\
&+ \left( \frac{2}{b} + \frac{8(k+1)}{(2k+1)(2k+3)b} \right)n + c_0(cD_x, n),
\end{align*}
where we view $c_0(cD_x, n)$ as a constant independent of $n$ for the moment. If we suppose that $b > 8(2k+1)^2$, then the $n^2-$coefficient of $g(b,c,n)$ is less than $\frac{-1}{(2k+1)^2}$. Thus,
\[
\frac{\partial}{\partial n} g(b,c,n) < 0 \text{ for all } n > \frac{(2k+1)^2}{b} + \frac{4(k+1)(2k+1)}{(2k+3)b}
\]
so $\frac{\partial}{\partial n} g(b,c,n) < 0$ for all $n \geq \frac{(2k+3)b}{16(k+1)} > \frac{(2k+1)^2}{b} + \frac{4(k+1)(2k+1)}{(2k+3)b}$. Moreover,
\begin{align*}
g(b,c,\frac{(2k+3)b}{16(k+1)}) &\leq \frac{-1}{(2k+1)^2}\frac{(2k+3)^2b^2}{16^2(k+1)^2} + \left( \frac{2}{b} + \frac{8(k+1)}{(2k+1)(2k+3)b} \right)\frac{(2k+3)b}{16(k+1)} + c_0(cD_x,n)  \\
&< -(2k+1)^2 + 1 + c_0(cD_x,n).
\end{align*}
Thus, using the uniform upper bound from Proposition \ref{prop:const-term-bd-k-2k+1-minus1}, we get
\[
-(2k+1)^2 + 1 + c_0(cD_x,n) \leq \frac{k^2}{4(2k+1)} + 2 - (2k+1)^2 + 1 < 0
\]
for each $k \geq 1$, so we have established our claim for $n > \frac{(2k+3)b}{16(k+1)}$ assuming $b > 8(2k+1)^2$.
\\
\textbf{Step II: $n < \frac{(2k+3)b}{16(k+1)}$}
\\
For $n \leq \frac{(2k+3)b}{16(k+1)} < \frac{b}{4}$, we can use the periodic upper bound in terms of $r = b\{ \frac{4n}{b} \} = 4n$ and $u \equiv 4n \mod (2k+1)$ from Corollary \ref{cor:const-term-bd-ncD1}:
\[
c_0(cD_x, n) \leq \left( \frac{-1}{4(2k+1)}+\frac{1}{2b(2k+1)} \right)u^2 + \left( \frac{1}{4}+\frac{1}{2b} + \frac{1}{4(2k+1)} \right)u + 1 - \frac{4n}{b}.
\]
Write $4n = (2k+1)t + u$ such that $0 \leq u \leq 2k$. Define
\[
f(b,c,u,t) = c_2(cD_x, n)n^2 + c_1(cD_x, n)n + c_0(cD_x,n) - \binom{\left\lceil \frac{4(k+1)}{2k+1}n \right\rceil + 1}{2}
\]
where
\[
\lceil \frac{4(k+1)}{2k+1}n \rceil = (k+1)t + \lceil \frac{1}{2}u + \frac{1}{2(2k+1)}u \rceil = 
\begin{cases}
(k+1)t + \frac{1}{2}u &\text{if $u=0$,} \\
(k+1)t + \frac{1}{2}u + \frac{1}{2} &\text{if $u$ is odd,} \\
(k+1)t + \frac{1}{2}u + 1 &\text{if $u$ is even and $u \neq 0$.}
\end{cases}
\]
If we write $\lceil \frac{4(k+1)}{2k+1}n \rceil = (k+1)t + \frac{1}{2}u + \frac{1}{2} + \epsilon$ with $\epsilon = \frac{-1}{2}$ if $u=0$, $\epsilon = 0$ if $u$ odd, and $\epsilon = \frac{1}{2}$ if $u \neq 0$ even, then
\begin{align*}
f(b,c,u,t) &= \left( \frac{-1}{8} - \frac{2k+1}{2b} \right)t^2 + \frac{1}{4}\left( \frac{1}{2k+1} - \frac{2}{(2k+1)b} \right)u^2 + \frac{1}{4}\left( 1 - \frac{4}{b} \right)ut \\
&+ \left( -\frac{k+1}{2} - \epsilon(k+1) + \frac{k}{b} \right)t + \left( \frac{-1}{4} - \frac{\epsilon}{2} + \frac{b + 4k}{4(2k+1)b} \right)u + c_0(cD_x,n) - \frac{3}{8} - \epsilon - \frac{1}{2}\epsilon^2 \\
&\leq \left( \frac{-1}{8} - \frac{2k+1}{2b} \right)t^2 + \left( -\frac{k+1}{2} - \epsilon(k+1) - \frac{k+1}{b} \right)t + \frac{1}{4}\left( 1 - \frac{4}{b} \right)ut\\
&+ \left(-\frac{\epsilon}{2} + \frac{b-1}{2(2k+1)b} \right)u + \frac{5}{8}- \epsilon - \frac{1}{2}\epsilon^2  .
\end{align*}
Notice that for $t=0$, $4n = u$ is always even. Thus, 
\[
f(b,c,u,0) \leq \left(-\frac{1}{4} + \frac{b-1}{2(2k+1)b} \right)u + \frac{5}{8}- \frac{1}{2} - \frac{1}{8} < 0
\]
for each $u \neq 0$ for all $k \geq 1$. Moreover, for $t = 1$, $u \neq 0$,
\begin{align*}
f(b,c,u,1) &\leq - \frac{4k+3}{2b} - \frac{k+1}{2} - \epsilon(k + 2) + \frac{1}{4}\left( 1 - \frac{4}{b} \right)u + \left( -\frac{\epsilon}{2} + \frac{b-1}{2(2k+1)b} \right)u + \frac{1}{2} < 0,
\end{align*}
and for $t = 1$, $u = 0$:
\[
f(b,c,0,1) \leq  \frac{7}{8} - \frac{4k+3}{2b} < 1.
\]
Finally, if we treat $u$ as a constant, then $f(b,c,u,t)$ is strictly decreasing in $t$ for all $t \geq 0$, $0 \leq u \leq 2k$:
\begin{align*}
\frac{\partial}{\partial t} f(b,c,u,t) &\leq \left( \frac{-1}{4} - \frac{2k+1}{b} \right)t -\frac{k+1}{2} - \epsilon(k+1) - \frac{k+1}{b} + \frac{1}{4}\left( 1 - \frac{4}{b} \right)u \\
&\leq
\begin{cases}
\left( \frac{-1}{4} - \frac{2k+1}{b} \right)t - \frac{k+1}{b} < 0 \text{ if $u=0$,} \\
\left( \frac{-1}{4} - \frac{2k+1}{b} \right)t -\frac{k+1}{2} - \frac{k+1}{b} + \left( 1 - \frac{4}{b} \right)\frac{2k-1}{4} < 0 \text{ if $u$ is odd,} \\
\left( \frac{-1}{4} - \frac{2k+1}{b} \right)t - (k+1) - \frac{k+1}{b} + \left( 1 - \frac{4}{b} \right)\frac{2k}{4} < 0 \text{ if $u$ is even.}
\end{cases}
\end{align*}
Therefore, the above tells us that
\[
f(b,c,u,t) < 0
\]
for all $t \geq 0$, $u \neq 0$, and that
\[
f(b,c,u,t) < 1
\]
for all $t \geq 0$, $u = 0$, as required. This concludes our proof for the range $\frac{b}{-p} \in (\frac{16k^2}{8k^2-4k-1},\frac{2k-1}{k-1})$, $k \geq 2$.


\begin{thebibliography}{MR2}

\bibitem[CT15]{CT15} Ana-Maria Castravet and Jenia Tevelev. ``$M_{0,n}$ is not a Mori dream space'', Duke Math. J., 164(8):1641--1667, 06 2015.

\bibitem[Cut91]{Cut91} Steven Dale Cutkosky. ``Symbolic algebras of monomial primes'', J. Reine Angew. Math, 416:71--89, 1991.

\bibitem[Fuj92]{Fuj92} Fujita, T., ``On Kodaira energy and adjoint reduction of polarized manifolds'', Manuscripta Math. 76 (1992), no. 1, 59--84.

\bibitem[GK16]{GK16} Jos\'e Luis Gonz\'alez and Kalle Karu. ``Some non-finitely generated Cox rings'', Compos. Mathematica, 152(5):984--996, 2016.

\bibitem[GGK20]{GGK20} Javier Gonz\'alez-Anaya, Jos\'e Luis Gonz\'alez, Kalle Karu. ``Curves generating extremal rays in blowups of weighted projective planes'', 2020, \url{https://arxiv.org/pdf/2002.07123.pdf}.

\bibitem[GNW94]{GNW94} Shiro Goto, Koji Nishida, and Kei-ichi Watanabe. ``Non-Cohen-Macaulay symbolic blow-ups for space monomial curves and counterexamples to Cowsik’s question'', Proc. Amer.
Math. Soc., 120(2):383--392, 1994.

\bibitem[He17]{He17} Zhuang, H. ``New examples and non-examples of MDS when blowing up toric surfaces'', 2017, \url{https://arxiv.org/pdf/1803.11536.pdf}.

\bibitem[Hun82]{Hun82} Craig Huneke, ``On the finite generation of symbolic blow-ups'', Math. Z. 179 (1982), 465--472.

\bibitem[L11]{L11} Linke, E., ``Rational Ehrhart Quasi-polynomials'', J. Combin. Theory Ser. A 118 (2011), 1966--1978. DOI:10.1016/j.jcta.2011.03.007

\bibitem[MR15]{MR15} McKinnon, D. and Roth, M., ``Seshadri constants, Diophantine approximation, and Roth's theorem
for arbitrary varieties'', {Invent. Math.} (200), 513--583 (2015). DOI 10.1007/s00222-014-0540-1.

\bibitem[MR16]{MR16} McKinnon, D. and Roth, M., ``An analogue of Liouville's Theorem and an application to cubic surfaces'', Eur. J. Math. 2 (2016), no. 4, 929--959. DOI:10.1007/s40879-016-0113-5

\bibitem[MS20]{MS20} McKinnon, D. and Satriano, M., ``Approximating rational points on toric varieties'', Trans. Am. Math. Soc. (2020), to appear.

\bibitem[Ro55]{roth-rat-approx} Roth, K. F. ``Rational approximations to algebraic numbers'', Mathematika 2 (1955), 1--20; corrigendum, 168. 

\bibitem[Sri91]{Sri91} Hema Srinivasan. ``On finite generation of symbolic algebras of monomial primes'', Comm. Algebra, 19(9):2557--2564, 1991.

\end{thebibliography}
\end{document}